\theoremstyle{plain}
\newtheorem{thm}{Theorem}[section]
\newtheorem{lem}[thm]{Lemma}
\newtheorem{cor}[thm]{Corollary}
\newtheorem{prop}[thm]{Proposition}
\newtheorem{assumption}[thm]{Assumption}
\theoremstyle{definition}
\newtheorem{ex}[thm]{Example}
\newtheorem{rem}[thm]{Remark}
\theoremstyle{definition}
\newtheorem{defi}[thm]{Definition}
\DeclareMathOperator{\Hom}{Hom}
\DeclareMathOperator{\Ker}{Ker}
\DeclareMathOperator{\modd}{mod}
\DeclareMathOperator{\HH}{H}
\DeclareMathOperator{\mi}{m_i}
\DeclareMathOperator{\Repp}{Rep}
\DeclareMathOperator{\In}{in}
\DeclareMathOperator{\out}{out}
\DeclareMathOperator{\Coker}{Coker}
\DeclareMathOperator{\im}{Im}
\DeclareMathOperator{\Ir}{Irr}
\DeclareMathOperator{\id}{id}
\DeclareMathOperator{\wt}{wt}
\DeclareMathOperator{\m}{m}
\DeclareMathOperator{\pl}{pl}
\DeclareMathOperator{\pr}{pr}
\DeclareMathOperator{\End}{End}
\DeclareMathOperator{\Ext}{Ext}
\DeclareMathOperator{\D}{D}
\newcommand{\kg}{\unlhd}
\newcommand{\Qu}{\mathrm{Q}}
\begin{document}

\title{Homological description of crystal structures on Lusztig's quiver varieties}
\author{Bea Schumann}
\address{Mathematical Institut, University of Cologne}
\email{bschuman@math.uni-koeln.de}

\begin{abstract}
Using methods of homological algebra, we obtain an explicit crystal isomorphism between two realizations of crystal bases of the lower part of the quantized enveloping algebra of (almost all) finite dimensional simply-laced Lie algebras. The first realization we consider is a geometric construction in terms of irreducible components of certain quiver varieties established by Kashiwara and Saito. The second is a realization in terms of isomorphism classes of quiver representations obtained by Reineke using Ringel's Hall algebra approach to quantum groups. We show that these two constructions are closely related by studying sufficiently generic representations of the preprojective algebra.
\end{abstract}

\thanks{This research was supported by the Graduiertenkolleg ''Global Structures in Geometry und Analysis'' and the DFG Priority program Darstellungstheorie 1388.}
\thanks{The author is deeply indebted to Volker Genz and Markus Reineke for several valuable and clarifying discussion and comments. Further a big thanks goes to Michael Ehrig for careful proofreading and helpful advise and Yoshiyuki Kimura for his comments on an earlier version of this paper.}

\maketitle

\section*{Introduction}
\subsection*{BACKGROUND}
Since Gabriel's famous theorem in 1972 (see \cite{G}) the connection between representation theory of quivers and Lie algebras of simply-laced type has evolved into a rich area of research. In loc. cit. Gabriel classified all quivers of finite representation type (i.e. with finitely many isomorphism classes of indecomposable representations). He showed that a quiver $\Qu$ is of finite representation type if and only if its underlying diagram is a simply-laced Dynkin diagram (i.e. of type $A_n$, $D_n$, $E_6$, $E_7$ or $E_8$). Furthermore, for Dynkin quivers, there is a bijection between the isomorphism classes of indecomposable representations and the set of negative roots of the Lie algebra associated to the underlying Dynkin diagram.

This theorem was used by Ringel (see e.g. \cite{hall}) to establish an even deeper connection. In loc. cit. Ringel showed that there is a $\mathbb{Q}(v)$-algebra isomorphism between the (twisted, generic) Hall algebra $\mathscr{H}(\Qu)$ and the negative part $U_v(\mathfrak{n}^-)$ of the quantized enveloping algebra of the simply-laced finite dimensional Lie algebra $\mathfrak{g}$ associated to $\Qu$. The underlying vector space of the Hall algebra has a basis consisting of all isomorphism classes of representations of $\Qu$. The multiplication in $\mathscr{H}(\Qu)$ is given in terms of certain numbers of filtrations of $\Qu$-representations over a finite field.

The identification of $U_v(\mathfrak{n}^-)$ with the Hall algebra yields a natural PBW-type basis of this algebra (w.r.t. a reduced decomposition of the longest Weyl group element adapted to $\Qu$) parameterized by the isomorphismen classes of $\Qu$-representations (see \cite{Ri2}).

Lusztig also constructed PBW-type bases of $U_v(\mathfrak{n}^-)$, one for each reduced decomposition of the longest Weyl group element $w_0$ of $\mathfrak{g}$, by applying certain braid group operators to Chevalley generators of $U_v(\mathfrak{n}^-)$ (see e.g. \cite{Lu93}). By this Lusztig constructed a unique basis of $U_v(\mathfrak{n}^-)$, called the canonical basis $\bf{B}$, with remarkable properties. Namely, let $\mathcal{B}$ be a fixed but arbitrary PBW-type basis, then the $\mathbb{Z}[v^{-1}]$-lattice $\mathscr{L}$ spanned by $\mathcal{B}$ is independent of the chosen reduced decomposition of $w_0$. Let $\pi: \mathscr{L}\rightarrow \mathscr{L}/v^{-1}\mathscr{L}$ denote the projection and let $\bar{\ }:U_v(\mathfrak{n}^-)\rightarrow U_v(\mathfrak{n}^-)$ be the canonical $\mathbb{Q}$-algebra involution of $U_v(\mathfrak{n}^{-})$ sending the generator $F_i$ to $F_i$ and $v$ to $v^{-1}$. Then the canonical basis is the unique $\bar{\ }$ invariant basis whose image under $\pi$ coincides with the image of $\mathcal{B}$.

For ${\bf i}=(i_1,i_2,\ldots,i_N)$ we denote by $\mathcal{B}_{\bf{i}}$ the PBW basis corresponding to the reduced decomposition $w_0=s_{i_1}s_{i_2}\cdots s_{i_N}$. Lusztig defined a combinatorial colored graph (or crystal) structure on $\bf{B}$, called $B(\infty)$, where the vertices are given by the basis elements of a fixed PBW-type basis $\mathcal{B}_{\bf{i}}$ and thus indexed by $\mathbb{N}^N$ (note that $N$ is the number of negative roots of $\mathfrak{g}$). We call the image of $b\in \mathcal{B}_{\bf{i}}$ under the map
\begin{align*}
\Psi_{\bf{i}}:\ \ \ \ \ \ \ \ \ \ \ \ \ \ \ \ \  \mathcal{B}_{\bf{i}} & \rightarrow \mathbb{N}^N \\
 F_{\beta_{1}}^{n_1}F_{\beta_{2}}^{n_2}\cdots F_{\beta_{N}}^{n_N} & \mapsto (n_1,n_2,\ldots, n_N)
\end{align*}
an $\bf{i}$-Lusztig parametrization of $b$. The arrows of the graph $B(\infty)$ are given by certain operators $\tilde{f}_i$, one for each simple root $\alpha_i$ of $\mathfrak{g}$, which form a combinatorial counterpart of the Chevalley generators of $U_v(\mathfrak{n}^-)$. The action of $\tilde{f}_{i_1}$ on $b\in \mathcal{B}_{\bf{i}}$ is easily defined if $\mathcal{B}_{\bf{i}}$ corresponds to a reduced expression of $w_0$ which starts with the simple reflection $s_{i_1}$. To determine the actions of $\tilde{f}_{i_k}$ for $k\ne i$ one uses the fact that two PBW-bases are obtained from each other by applying a composition of certain braid moves. The induced map $\gamma_{\bf{i},\bf{j}}$ such that $\gamma_{\bf{i},\bf{j}}\circ\Psi_{\bf{i}}=\Psi_{\bf{j}}$ corresponds to a composition of piecewise linear bijections of $\mathbb{N}^N$. For one braid move the transition map is explicitly known (see \cite{Lu93}). However the combinatorics behind the composition of these piecewise linear bijections is difficult to handle since it is given by the composition of operations that involve taking minima.

In \cite{Rei} Reineke used Ringel's Hall algebra approach to quantum groups to describe the crystal structure on a fixed PBW-basis $\mathcal{B}_{\bf{i}}$ explicitly in terms of the set of isomorphism classes of representations of a fixed Dynkin quiver $\Qu$. Here $\bf{i}$ corresponds to a reduced expression of $w_0$ which is adapted to a Dynkin quiver $\Qu$ that is satisfying a certain homological condition (see Definition \ref{special}).

Another construction of $B(\infty)$, called $B^g(\infty)$, in terms of quivers was given geometrically by Kashiwara and Saito in \cite{KS}. They defined a crystal structure on the irreducible components of the varieties $\Lambda_V$ of representations of the preprojective algebra $\Pi(\Qu)$ with varying underlying vector spaces $V$. These varieties were used by Lusztig to give a geometric construction of $U(\mathfrak{n}^-)$, the enveloping algebra of the Lie algebra $\mathfrak{n}^-$ (see \cite{Lu2}), as well as to show the existence of a nicely behaved basis of this algebra, called the semicanonical basis (see \cite{Lu00}), which is naturally indexed by the irreducible components of $\Lambda_V$. In the following we refer to these varieties as Lusztig's quiver varieties. Lusztig conjectured that the semicanonical and the specialization of the canonical basis coincides but Kashiwara and Saito gave a counterexample in \cite{KS}.

The Kashiwara operators corresponding to the arrows in the graph $B^g(\infty)$ can be described in the following way. Let $x\in X$ be a generic point of an irreducible component $X$ of Lusztig's quiver variety and let $M(x)$ be the corresponding $\Pi(\Qu)$-module. The component $\tilde{f}_iX$ is given as the closure of all points $y$ that (regarded as $\Pi(\Qu)$-modules $M(y)$) appear as the middle term of exact sequences
$$0\rightarrow M(x)\rightarrow M(y)\rightarrow S(i) \rightarrow 0,$$
where $S(i)$ is the simple representation of $\Pi(\Qu)$ corresponding to the vertex $i$.

\subsection*{MAIN RESULT}

The main result of this paper is a homological interpretation of the geometric realization of $B(\infty)$ of \cite{KS} using the representation theory of the preprojective algebra and its underlying Dynkin quiver. We compute the geometric crystal structure using the combinatorics of the Auslander-Reiten quiver of the Dynkin quiver associated to the Lie algebra $\mathfrak{g}$. To achieve this we use results by Reineke and give an explicit crystal isomorphism between the geometric and the homological realization of the crystal $B(\infty)$ given in \cite{Rei}. This provides an explanation for the compatibility of the indexation of the canonical and the semicanonical basis.

We remark that this result can also be deduced by the work of Baumann-Kamnitzer (see \cite{BK}) and Saito (see \cite{Sai}) that both provide a crystal isomorphism between the realization of $B(\infty)$ in terms of $\bf{i}$-Lusztig parametrization and the geometric realization. In \cite{BK} Baumann and Kamnitzer used some reflection functors for representations of the preprojective algebra and the classical Bernstein-Gelfand-Ponomarev reflection functors for representation of Dynkin quivers. In \cite{Sai} Saito used a result of Kimura to show that changing the orientations of the underlying Dynkin quiver of the preprojective algebra such that $i_1$ is a sink is compatible with the transition to an $\bf{i}$-Lusztig parametrization such that the reduced decomposition of $w_0$ starts with $i_1$.

The methods in this work are quite different from the ones used in \cite{BK} and \cite{Sai}. Here we take a direct approach fixing the orientation of the Dynkin quiver $\Qu$. We develop a combinatorial method to determine the dimension of the head of a generic module of an irreducible component $X$ of $\Lambda_V$ and construct a dense subset of $X$ for which the action of the crystal operator $\tilde{f}_i$ can be determined.

\subsection*{STRUCTURE OF THIS PAPER}
The paper is structured as follows.

We recall the geometric construction of $B^g(\infty)$ via the irreducible components of Lusztig's quivers varieties in Section \ref{geometricbinfty}.

In Section \ref{homologicbinfty} Reineke's construction of $B(\infty)$  is introduced. We denote this crystal by $B^{\mathscr{H}}(\infty)$. Each vertex $b\in B^{\mathscr{H}}(\infty)$ is given by an isomorphism class $[M]$ for $M$ a $\Qu$-representation. Reineke showed that if there is an exact sequence of $\Qu$-representations
$$0\rightarrow M \rightarrow X \rightarrow S(i)\rightarrow 0$$
such that $X$ satisfies a certain additional property, we have $\tilde{f}_i{[M]}=[X]$.
He further proved that for any $\Qu$-representation $M$ there exists a representation $X$ satisfying these properties. The existence is deduced from the classification of the middle terms of short exact sequences of $\Qu$-representations ending in the simple $\Qu$-representation $S(i)$. Thereby an algorithm is obtained to determine the actions of the Kashiwara operators in terms of the given combinatorial data $(\mu_B(M))_B$, where $B$ varies over all indecomposable $\Qu$-representations. Here $\mu_B(M)$ denotes the multiplicity of the indecomposable direct summands $B$ of $M$.

In Section \ref{comparebinfty} we give a crystal isomorphism between $B^{\mathscr{H}}(\Qu)$ and $B^g(\infty)$. We first describe how to relate the vertices. For this we use a result of Lusztig that gives a one-to-one correspondence between the irreducible components of quiver varieties and isomorphism classes of representations of $\Qu$ (see Proposition \ref{conormal}). We then work in a homological algebra setting using Ringel's description of $\Pi(\Qu)$-modules as pairs $(M,\phi)$ for $\phi\in\Hom(\tau^{-1}M,M)$, where $\tau^{-1}$ is the inverse Auslander-Reiten translation of $\Qu$. Let $M$ be a representation of $\Qu$ and $X_{[M]}$ be the irreducible component corresponding to the isomorphism class of $M$. We develop a combinatorial machinery to prove that the function $\varepsilon_i$ on $X_{[M]}$ (which counts how many consecutive times we applied $\tilde{f}_i$ to get to the desired vertex in the crystal graph) in the geometric setting only depends on the data $(\mu_B(M))_B$.
We further show that there is a dense subset of $X_{[M]}$ that is mapped to the component $X_{\tilde{f}_i[M]}$ by $\tilde{f}_i$. This is proved by constructing a certain class of points of an irreducible component which are sufficiently generic but can be handled combinatorially. We therefore obtain that the one-to-one correspondence between the irreducible components of quiver varieties and isomorphism classes of representations of $\Qu$ is indeed a crystal isomorphism (see Theorem \ref{mainresult}).
\bigskip

\paragraph{NOTATION}

In what follows $\Qu$ always denotes a Dynkin quiver with path algebra $k\Qu$ over a field $k$. For $M, N \in k\Qu-\modd$, we denote the isomorphism class of $M$ by $[M]$ and multiplicity of $N$ as a direct summand of $M$ by $\mu_N(M)$. We denote by $\tau M$ (resp. $\tau^{-1}M$) the Auslander-Reiten translation (resp. inverse Auslander-Reiten translation) of $M$. Further $\left<M,N\right>_R := \dim\Hom_{k\Qu}(M,N)-\dim\Ext^{1}(M,N)$ denotes the Ringel form and for $i$ in the vertex set of $\Qu$ we denote by $S(i)$ the simple module obtained by assigning a one-dimensional vector space to the vertex $i$ and a zero-dimensional vector space to all other vertices of $\Qu$.
\section{THE GEOMETRIC CONSTRUCTION}\label{geometricbinfty}

In this section we review the realization of the crystal graph $B(\infty)$ via Lusztig's quiver varieties. We start by introducing the varieties of interest.

\subsection{LUSZTIG'S QUIVER VARIETY}

For a Dynkin quiver $\Qu=(I, \Qu_1)$ we denote the associated \emph{double quiver} by $\overline{\Qu}=(I,H)$ that has as the set of vertices the vertices of $\Qu$ and for each arrow of $\Qu$, $H$ contains two arrows with the same endpoints, one in each direction.

For an arrow $h\in H$, we denote by $\overline{h}$ the arrow with $\out(h)=\In(\overline{h})$ and $\In(h)=\out(\overline{h})$. Therefore, the double quiver $\overline{\Qu}=(I,H)$ has as set of arrows $H=\Qu_1 \sqcup \overline{\Qu_1}$.

\begin{ex} We give an example of the double quiver of Dynkin type $A_3$:
\begin{equation*}
\overline{\Qu}=\xymatrix{
1\ar@<0.5ex>[r]^{\overline{h}_1} & 2\ar@<0.5ex>[l]^{h_1} \ar@<0.5ex>[r]^{\overline{h}_2} & 3.\ar@<0.5ex>[l]^{h_2}
}\
\end{equation*}
\end{ex}
 We define the function
\begin{align*}
\epsilon: H & \rightarrow \{\pm 1\} \\
h&\mapsto \left\{\begin{array}{cl} 1, & \mbox{if }h\in \Qu\\ -1, & \mbox{if }h \notin \Qu .\end{array}\right.
\end{align*}
The \emph{preprojective algebra $\Pi(\Qu)$} of a Dynkin quiver $\Qu$ is the quotient of the path algebra of the double quiver $\overline{\Qu}$ by the ideal generated by
$$\sum_{h\in H}\epsilon(h)h\bar{h}.$$
For a fixed finite--dimensional $I$--graded vector spaces $V=\bigoplus_{i\in I} V_i$ over $\mathbb{C}$, we define \emph{Lusztig's quiver variety} $\Lambda_V$ to be the variety of representations of $\Pi(\Qu)$ with underlying vector space $V$, i.e.
$$\Lambda_V:=\left\{x=(x_h)_{h\in H}\in \displaystyle \bigoplus_{h\in H}{\Hom(V_{\out(h)},V_{\In(h)})}\mid \sum_{h\in H, \In(h)=i}\epsilon(h)x_hx_{\bar{h}}=0 \text{ for all }i \in I \right\}.$$
Let $\Repp_V(\Qu)$ be the variety of representations of the Dynkin quiver $\Qu$ with underlying vector space $V$, that is
$$\Repp_V(\Qu)=\displaystyle \bigoplus_{h\in \Omega}{\Hom(V_{\out(h)},V_{\In(h)})},$$
which is clearly a closed subvariety of the affine variety $\Lambda_V$.

From now on we constantly identify the points of $\Lambda_V$ (resp. $\Repp_{V}$) with the corresponding modules over $\Pi(\Qu)$ (resp. $\mathbb{C}\Qu$) and write expressions like $M\in \Lambda_V$ for $M=(V,x)\in \Pi(\Qu)-\modd$.

We have an action of the group $G_v=\prod_i{GL(V_i)}$ on $\Lambda_V$ and $\Repp_{V}(\Qu)$ by base change, that is for $M=(V,x) \in \Lambda_V$, $g.M=\widetilde{M}$, where $\widetilde{M}=(V,\widetilde{x})\in \Pi(\Qu)-\modd$ with
$$\widetilde{x}_h:=g_{\In(h)}x_hg^{-1}_{\out(h)}$$
and analogously for $M\in \Repp_{V}(\Qu)$.

The orbits of this action on $\Lambda_V$ (resp. $\Repp_{V}(\Qu)$) are exactly the isomorphism classes of representations of $\Pi(\Qu)-\modd$ (resp. $k\Qu-\modd$) with a fixed dimension vector $v$.

\begin{rem} The definition of Lusztig's quiver variety given in \cite{KS} imposes an additional nilpotency condition on the elements of $\Lambda_V$. But, since we restrict ourselves to preprojective algebras of Dynkin quivers, this condition is automatically satisfied (see \cite[Proposition 14.2(a)]{Lu}) and we thus omit it.
\end{rem}

Note that, up to isomorphism, $\Lambda_V$ depends only on the graded dimension $v=(\dim V_i)_{i\in I}$ of $V$. Therefore we also denote $\Lambda_V$ by $\Lambda(v)$,  regarding the graded dimension of the vector spaces as part of the datum of the representations of $\Pi(\Qu)$.

\subsection{KASHIWARA OPERATORS}

Following \cite{KS}, we recall the crystal structure on the set of irreducible components of $\Lambda(v)$, which we denote by $\Ir\Lambda(v)$.

For $i\in I$ and $M\in\Lambda(v)$, define $\varepsilon_i(M)$ to be the dimension of the $S(i)$-isotypic component of the head of $M$. For $M=(V,x)\in\Pi(\Qu)-\modd$, that is
\begin{equation}\label{eq:epsil}
{\varepsilon_{i}(M)}=\dim\Coker\left(\displaystyle\bigoplus_{h:\In(h)=i}V_{\out(h)}\xrightarrow{x_h}{V_i}\right).
\end{equation}
For $c\in \mathbb{Z}_{\ge 0}$, we further introduce the subsets
\begin{equation*}
\Lambda(v)_{i,c}:=\{M\in \Lambda(v)  \mid \varepsilon_i(M)=c\}.
\end{equation*}
To describe the actions of the Kashiwara operators, let $e^{i}\in \mathbb{Z}_{\ge 0}^{I}$ be such that $e^{i}_j=\delta_{ij}$ and fix $c\in \mathbb{Z}_{\ge 0}$ and $v\in\mathbb{Z}_{\ge 0}^I$ such that $v_i-c\ge 0$. We define $$\tilde{\Lambda}(v,c,i):=\{M,N,\varphi \mid M\in \Lambda(v)_{i,c}, N\in \Lambda(v-ce^i)_{i,0}, \varphi\in \Hom_{\Pi(\Qu)}(N,M) \text{ injective}\}.$$
Considering the diagram

\begin{equation}\label{diaggeom}
\Lambda(v-ce^i)_{i,0} \xleftarrow{p_1} \tilde{\Lambda}(v,c,i)\xrightarrow{p_2}\Lambda(v)_{i,c},
\end{equation}
where $p_1(M,N,\varphi)=N$ and $p_2(M,N,\varphi)=M$, it is shown in \cite[Lemma 5.2.3]{KS} that the map $p_2$ is a principal $G_v$-bundle and the map $p_1$ is smooth with a connected variety as fiber. Standard algebraic geometry arguments then show (for $\Lambda(v)_{i,c}\ne\emptyset$) that there is a one--to--one correspondence between the set of irreducible components of $\Lambda(v-ce^i)_{i,0}$ and the set of irreducible components of $\Lambda(v)_{i,c}$.

For $i\in I$  the function $\varepsilon_i$ given in (\ref{eq:epsil}) is upper semicontinuous, thus for each $X\in \Ir\Lambda(v)$ there is an open dense subset of $X$ such that $\varepsilon_i$ is constant (namely the value of $\varepsilon_i$ of this subset is the minimal value of $\varepsilon_i$ on $X$). For $X\in\Ir\Lambda(v)$ we thus define
\begin{equation}\label{eq:epsilon}\varepsilon_i(X):=\min_{M\in X}\varepsilon_i(M).
\end{equation} We also define for $c\in \mathbb{Z}_{\ge 0}$
$${ \Ir\Lambda(v)_{i,c}}:=\{X\in \Ir\Lambda(v)\mid\varepsilon_i(X)=c\}.$$
We get Bijection (\ref{eq:irrbijectionn}) directly from the prior considerations and \cite[Theorem 12.3 b)]{Lu} which states that  both $\Lambda_v$ and $\Lambda(v)_{i,c}$ have pure dimension $\frac{1}{2}\dim E_v$:
\begin{equation}\label{eq:irrbijectionn}
\Ir\Lambda(v-ce^i)_{i,0}\cong \Ir\Lambda(v)_{i,c}.
\end{equation}
\begin{rem}\label{dense} Note that the image of an irreducible component $X\in \Ir\Lambda(v)_{i,c}$ is the unique irreducible component $Y\in\Ir\Lambda(v-ce^i)_{i,0}$ such that for any dense subset $\mathscr{D}$ of $X$, we have $p_2^{-1}p_1(\mathscr{D})\subset Y$.
\end{rem}
Suppose that $\bar{X}\in \Ir\Lambda(v-ce^i)_{i,0}$ corresponds to $X\in \Ir\Lambda(v)_{i,c}$ by Bijection (\ref{eq:irrbijectionn}). We define maps
\begin{align*}
\tilde{f}_i^c:\Ir\Lambda(v-ce^i)_{i,0}&\rightarrow \Ir\Lambda(v)_{i,c},\\
\tilde{e}_i^{\max}:\Ir\Lambda(v)_{i,c}&\rightarrow \Ir\Lambda(v-ce^i)_{i,0} \label{actione}
\end{align*}
by
$$\tilde{f}_i^c(\bar{X}):=X \text{ and }\tilde{e}_i^{\max}(X):=\bar{X}.$$
The data of these maps yields a crystal structure on $B^g(\infty):=\bigsqcup_{v}\Ir\Lambda(v)$ together with defining for $X\in B^g(\infty)$
\begin{align*}
\wt(X)&:=-\displaystyle\sum_{i\in I}v_i\alpha_i \text{ for } X\in \Ir\Lambda(v),\\
\varphi_i(X)&:=\varepsilon_i(X)+\left<h_i,\wt(X)\right>.
\end{align*}
It is shown in \cite[Theorem 5.3.2]{KS} that $B^g(\infty)$ is isomorphic to the crystal $B(\infty)$ of $U_v(\mathfrak{n}^-)$.

\section{THE HOMOLOGICAL CONSTRUCTION}\label{homologicbinfty}
\subsection{RINGEL HALL ALGEBRAS AND CRYSTAL BASES}\label{ringelhallalgebra}

We review the notion and some facts about Ringel Hall algebras. Even though we do not use the results of this section in our proofs, they form an indispensable ingredient for the homological realization of the crystal $B(\infty)$ introduced by Reineke in \cite{Rei}.

Let $\Qu$ be a Dynkin quiver of type $A,D,E$ and $\mathfrak{g}$ the associated finite dimensional complex simple Lie algebra with Cartan decomposition $\mathfrak{g}=\mathfrak{n}^{+}\oplus \mathfrak{h} \oplus \mathfrak{n}^{-}$ and let $k$ be an arbitrary field. Recall that by Gabriel's Theorem (see \cite{G}) we have a one--to--one correspondence between the set of negative roots of $\mathfrak{g}$ and the isomorphism classes of indecomposable $k\Qu$-modules, independent of the ground field $k$. For a $k\Qu$-module $M$ we denote the isomorphism class by $[M]$.

Recall that for an indecomposable $k\Qu$-module $B$ and any $k\Qu$-module $M$, we denote by $\mu_B(M)$ the multiplicity of $B$ as a direct summand of $M$. For $\alpha$ a negative root of $\mathfrak{g}$ and any field $k$, we denote a fixed representative of the isomorphism class of indecomposable $k\Qu$-modules associated to this root by $M(\alpha,k)$. Let $R^{-}$ be the negative root lattice of $\mathfrak{g}$. This yields a one-to-one correspondence between isomorphism classes of $k\Qu$-modules and maps with finite support $\gamma: R^{-} \rightarrow \mathbb{Z}_{\ge 0}$ by mapping $[M]$ ($M\in k\Qu-\modd$) to $\gamma_{M}:\alpha \mapsto \mu_{M(\alpha,k)}(M)$.  Conversely, for a map $\gamma:R^{-}\rightarrow \mathbb{Z}_{\ge 0}$, we get a representative $M$ of an isomorphism class of $k\Qu$-modules via $M=\bigoplus_{\alpha \in R^{-}}M(\alpha,k)^{\gamma(\alpha)}$ which we denote by $M(\gamma,k)$.

Let $q$ be a prime power, $M$, $N$, $X$ be $k\Qu$--modules and $\mathbb{F}_q$ be the finite field with $q$ elements. We define ${F_{M,N}^{X}(q)}$ as the number of submodules $U$ of $M(\gamma_X,\mathbb{F}_q)$ over $\mathbb{F}_q$ such that $U\cong M(\gamma_N,\mathbb{F}_q)$ and $M(\gamma_X,\mathbb{F}_q)/U \cong M(\gamma_M,\mathbb{F}_q)$ over $\mathbb{F}_q$.

We have that ${F_{M,N}^{X}(q)}$ is a polynomial in $q$ (seen as a formal variable) with integer coefficients called the \emph{Hall polynomial} (\cite[Theorem 1, p. 439]{Ri90}).

Setting $q=v^2$, we obtain:
$${F_{M,N}^{X}(v^2)}\in\mathbb{Q}[v].$$
Recall that for $M,N\in k\Qu-\modd$ we have $\left<M,N\right>_R=\dim\Hom(M,N)-\dim\Ext^{1}(M,N)$. We define the (twisted, generic) \emph{Hall algebra} $\mathscr{H}(\Qu)$ of a quiver $\Qu$ to be the $\mathbb{Q}[v,v^{-1}]$-vector space with basis elements $u_{[M]}$ indexed by the isomorphism classes $[M]$ of $k\Qu$--modules and multiplication defined by
$$u_{[M]}\star u_{[N]}:=v^{\left<M,N\right>_R}\sum_{[X]}F^{X}_{M,N}(v^2)u_{[X]}.$$
For $m\in \mathbb{Z}_{\ge 0}$ we introduce the following abbreviations:
\[ \begin{matrix} [m]!:=\displaystyle\prod_{k=1}^m[m] & & \text{ and } & & [m]:=\frac{v^m - v^{-m}}{v-v^{-1}}. \end{matrix} \]
By \cite[Proposition, p. 21]{hall} the $\mathbb{Z}[v,v^{-1}]$-algebra $\mathscr{H}(\Qu)$ is generated by the elements $u_{[S(i)]}^{(\star m)}$ with $1\le i \le n$ and $m\ge 1$. Here (see loc cit., p.16)
$$u_{[S(i)]}^{(\star m)}:=\frac{1}{[m]!}u_{[S(i)]}^{\star m}.$$
Let $A=(a_{i,j})_{i,j\in I}$ be the Cartan matrix of $\mathfrak{g}$ and let $I=\{1,2,\ldots,n\}$. We define $U'_{q}(\mathfrak{n}^{-})$ as the $\mathbb{Q}(v)$-algebra with generator $F_1,F_2,\ldots ,F_n$  and relations
\begin{align*}
F_i F_j - F_j F_i & = 0 &\text{     if }&a_{i,j}=0, \\
F_i^2 F_j - (v+v^{-1})F_i F_j F_i + F_j F_i^2 & = 0 & \text{     if } &a_{i,j}=-1
\end{align*}
We denote
$$F_i^{(m)}:=\frac{1}{[m]!}F_i^{m}.$$
The $\mathbb{Z}[v,v^{-1}]$-subalgebra of $U'_{q}(\mathfrak{n}^{-})$ which is generated by the elements $F_i^{(m)}$ ($1\le i \le n$ and $m\ge 0$) is denoted by $U_q(\mathfrak{n}^-)$.

We have the following theorem by Ringel (see e.g. \cite[Theorem, p. 21]{hall}):

\begin{thm}\label{hall} The map $\eta_{\Qu}: \mathscr{H}(\Qu) \rightarrow U_v(\mathfrak{n}^{-})$ defined by $\eta_{\Qu}(u_{[S(i)]})=F_i$ induces an isomorphism of $\mathbb{Z}[v,v^{-1}]$-algebras.
\end{thm}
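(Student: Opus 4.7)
The strategy is to construct an inverse map and compare graded ranks. Concretely, I would first define a candidate $\mathbb{Q}(v)$-algebra homomorphism $\zeta: U'_v(\mathfrak{n}^-) \to \mathscr{H}(\Qu)\otimes_{\mathbb{Z}[v,v^{-1}]}\mathbb{Q}(v)$ by $F_i \mapsto u_{[S(i)]}$. For this to be well-defined, the elements $u_{[S(i)]}$ must satisfy the defining relations of $U'_v(\mathfrak{n}^-)$. When $a_{ij}=0$, the vertices $i$ and $j$ are not connected in $\Qu$, so $\Ext^1(S(i),S(j))=\Ext^1(S(j),S(i))=0$ and $\left<S(i),S(j)\right>_R=0$; the only $k\Qu$-module with composition factors $\{S(i),S(j)\}$ is the semisimple $S(i)\oplus S(j)$, and a direct computation of Hall numbers yields $u_{[S(i)]}\star u_{[S(j)]}=u_{[S(i)\oplus S(j)]}=u_{[S(j)]}\star u_{[S(i)]}$. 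When $a_{ij}=-1$, one enumerates the two isomorphism classes of $k\Qu$-modules of dimension vector $2e_i+e_j$ --- the semisimple $S(i)^{\oplus 2}\oplus S(j)$ and the decomposable $S(i)\oplus M_{ij}$, where $M_{ij}$ is the unique indecomposable of dimension vector $e_i+e_j$ --- and computes their Hall polynomials. After bookkeeping of the prefactors $v^{\left<\cdot,\cdot\right>_R}$, the cubic Serre relation emerges as a cancellation of Hall numbers. This verification is the main computational obstacle, though it is by now classical.

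Once $\zeta$ is defined, I would observe that $\zeta(F_i^{(m)})=u_{[S(i)]}^{(\star m)}$ lies in the integral Hall algebra $\mathscr{H}(\Qu)$, since the Hall polynomials $F^X_{M,N}(v^2)$ have integer coefficients. Because $U_v(\mathfrak{n}^-)$ is generated over $\mathbb{Z}[v,v^{-1}]$ by the divided powers $F_i^{(m)}$, this shows $\zeta$ restricts to a $\mathbb{Z}[v,v^{-1}]$-algebra homomorphism $\zeta: U_v(\mathfrak{n}^-) \to \mathscr{H}(\Qu)$. Surjectivity is then immediate from the cited \cite[Proposition, p.~21]{hall}: $\mathscr{H}(\Qu)$ is generated over $\mathbb{Z}[v,v^{-1}]$ by the elements $u_{[S(i)]}^{(\star m)}$, all of which lie in the image of $\zeta$.

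Finally, I would prove injectivity by comparing graded ranks. Both $\mathscr{H}(\Qu)$ and $U_v(\mathfrak{n}^-)$ are $\mathbb{Z}_{\ge 0}^I$-graded by dimension vector, and $\zeta$ respects this grading. The homogeneous piece $\mathscr{H}(\Qu)_d$ is a free $\mathbb{Z}[v,v^{-1}]$-module with basis $\{u_{[M]}\}$ indexed by isomorphism classes of $k\Qu$-modules of dimension vector $d$. By Gabriel's theorem these correspond to maps $\gamma: R^-\to \mathbb{Z}_{\ge 0}$ satisfying $\sum_{\alpha\in R^-}\gamma(\alpha)(-\alpha)=d$, whose number is the Kostant partition number $P(d)$. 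On the other side, the classical PBW theorem for $U_v(\mathfrak{n}^-)$ (associated to any reduced decomposition of $w_0$) gives a $\mathbb{Z}[v,v^{-1}]$-basis of $U_v(\mathfrak{n}^-)_d$ also of cardinality $P(d)$. A graded surjection of free $\mathbb{Z}[v,v^{-1}]$-modules of equal finite rank in each degree is an isomorphism; hence $\zeta$ is bijective and $\eta_\Qu$ is its inverse, yielding the desired $\mathbb{Z}[v,v^{-1}]$-algebra isomorphism.
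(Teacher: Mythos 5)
The paper does not supply its own proof of this theorem; it is cited directly from Ringel (\cite[Theorem, p.~21]{hall}), so there is no in-text argument to compare against. Your reconstruction is, however, essentially the classical proof: construct the candidate inverse $\zeta$ by verifying the quantum Serre relations in the Hall algebra via explicit Hall number computations, pass to the integral forms, get surjectivity from the generating set, and get injectivity by matching graded ranks against the Kostant partition function using Gabriel's theorem on one side and the quantum PBW theorem on the other. This is sound, and the rank argument (a surjection between free $\mathbb{Z}[v,v^{-1}]$-modules of equal finite rank is an isomorphism) is correct.

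One point you skate over a bit quickly is the integrality of the divided powers. You justify $\zeta(F_i^{(m)})=u_{[S(i)]}^{(\star m)}\in\mathscr{H}(\Qu)$ by appealing to the Hall polynomials having integer coefficients, but that only controls products $u_{[M]}\star u_{[N]}$; it does not by itself address the division by $[m]!$ in the definition of the divided power. The correct statement, which requires a short explicit computation, is that $u_{[S(i)]}^{\star m}=v^{m(m-1)/2}[m]!\,u_{[S(i)^{\oplus m}]}$ (counting flags of subspaces in $\mathbb{F}_q^m$), so that $u_{[S(i)]}^{(\star m)}=v^{m(m-1)/2}u_{[S(i)^{\oplus m}]}$ is manifestly in the $\mathbb{Z}[v,v^{-1}]$-span of the $u_{[M]}$. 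With that supplied, the rest of your argument goes through.
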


By setting $f^{\Qu}_{[M]}=v^{\dim\End(M)-\dim M}u_{[M]}$ we get a basis of $\mathscr{H}(\Qu)$. Via $\eta_{\Qu}$, this basis is sent to a PBW-basis $B_{\Qu}$ of $U_v(\mathfrak{n}^{-})$ corresponding to a reduced expression of the longest Weyl group element $w_0$ of $\mathfrak{g}$ adapted to $\Qu$ (\cite[Theorem 7]{Ri2}). We denote those basis elements by $F^{\Qu}_{[M]}:=\eta_{\Qu}(f^{\Qu}_{[M]})$.

The $\mathbb{Z}[v^{-1}]$-lattice $\mathscr{L}$ spanned by $B_{\Qu}$ is independent of the reduced decomposition of $w_0$ (\cite[Proposition 2.3]{Lu1}) and thus of the orientation of $\Qu$. Furthermore, by loc. cit., the image of $B_{\Qu}$ under the projection $\pi:\mathscr{L}\rightarrow \mathscr{L}$/$v^{-1}\mathscr{L}$ is a $\mathbb{Z}$-basis $B$ of $\mathscr{L}$/$v^{-1}\mathscr{L}$ which is again independent of the orientation of $\Qu$.

Let us denote by $\bar{\ }:U_v(\mathfrak{n}^-)\rightarrow U_v(\mathfrak{n}^-)$ the canonical $\mathbb{Q}$-algebra involution of $U_v(\mathfrak{n}^{-})$ sending the generator $F_i$ to $F_i$ and $v$ to $v^{-1}$. Then there is a unique $\bar{\ }$-invariant basis $\mathscr{B}$ of $\mathscr{L}$ whose image under $\pi$ is $B$ (\cite[Theorem 3.2]{Lu1}).

This basis is called the \emph{canonical basis}. The elements of $\mathscr{B}$ can hence be parametrized by the isomorphism classes of $k\Qu$-modules defining $\mathscr{F}^{\Qu}_{[M]}\in \mathscr{B}$ by
$$\pi(\mathscr{F}^{\Qu}_{[M]})=\pi(F^{\Qu}_{[M]})$$
and setting
$$\mathscr{B}=\{\mathscr{F}^{\Qu}_{[M]} \mid  M\in k\Qu-\modd\}.$$
We get the following from \cite[Theorem 2.3]{Lu2}.
\begin{thm} Let $\mathscr{B}$ be the canonical basis. Set
\begin{equation*}
\begin{matrix}
\mathcal{L}'=\displaystyle\bigoplus_{b\in \mathscr{B}}\mathcal{A}_0 b, & & & & B'=\{b \modd v^{-1} \mathcal{L}'; \ b\in\mathscr{B}\}.
\end{matrix}
\end{equation*}
Then $(\mathcal{L}',B')$ is a crystal basis of $U_v(\mathfrak{n}^-)$ and hence isomorphic to $(\mathcal{L}(\infty),\mathcal{B}(\infty))$.
\end{thm}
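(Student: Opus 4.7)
The plan is to reduce the claim to verifying the crystal basis axioms for the PBW pair $(\mathscr{L}, B)$ from the preceding discussion, and then to invoke the uniqueness theorem for crystal bases of $U_v(\mathfrak{n}^-)$.

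Unpacking the definition of $\mathscr{F}^{\Qu}_{[M]}$ recalled just above, we have $\mathscr{F}^{\Qu}_{[M]} - F^{\Qu}_{[M]} \in v^{-1}\mathscr{L}$. Consequently the transition matrix between $\{\mathscr{F}^{\Qu}_{[M]}\}$ and the PBW basis $\{F^{\Qu}_{[M]}\}$ is unitriangular modulo $v^{-1}$, so the $\mathscr{F}^{\Qu}_{[M]}$ also form an $\mathcal{A}_0$-basis of $\mathscr{L}$. Therefore $\mathcal{L}' = \mathscr{L}$ and $B' = B$, and it suffices to show that the PBW pair $(\mathscr{L}, B)$ is itself a crystal basis.

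For the axioms I would argue vertex by vertex. Fix $i \in I$, and exploit the orientation-independence of $\mathscr{L}$ and $B$ recalled in Section \ref{ringelhallalgebra} to reorient $\Qu$ to some $\Qu'$ in which $i$ is a sink; there is then a reduced expression of $w_0$ starting with $s_i$, adapted to $\Qu'$. For any $k\Qu'$-module $M$, write $n_i = \mu_{S(i)}(M)$ and decompose $M = M_0 \oplus S(i)^{\oplus n_i}$. Since $i$ is a sink, $\Ext^{1}_{k\Qu'}(S(i), M_0) = 0$, so the only extension of $S(i)^{\oplus n_i}$ by $M_0$ is the split one, and a direct Hall-number computation gives the identity
\[
f^{\Qu'}_{[M_0]} \star u_{[S(i)]}^{(\star n_i)} \;=\; v^{a(M)}\, f^{\Qu'}_{[M]}
\]
in $\mathscr{H}(\Qu')$ for an explicit integer $a(M)$. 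Pushing this through $\eta_{\Qu'}$ identifies $F^{\Qu'}_{[M]}$ up to an invertible scalar with $F^{\Qu'}_{[M_0]}\,F_i^{(n_i)}$. For a reduced expression starting with $s_i$ the Kashiwara operator $\tilde f_i$ is, by definition, the operator that increments the exponent of $F_i^{(n_i)}$, so one obtains modulo $v^{-1}\mathscr{L}$ the identities
\[
\tilde{f}_i F^{\Qu'}_{[M]} \equiv F^{\Qu'}_{[M \oplus S(i)]}, \qquad
\tilde{e}_i F^{\Qu'}_{[M]} \equiv
\begin{cases} F^{\Qu'}_{[M_0 \oplus S(i)^{\oplus(n_i - 1)}]}, & n_i \ge 1, \\ 0, & n_i = 0. \end{cases}
\]
Running over all $i$ yields the stability of $\mathscr{L}$ under every $\tilde{e}_i, \tilde{f}_i$ and the permutation property on $B \sqcup \{0\}$, which are the crystal basis axioms. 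Finally, the uniqueness theorem for crystal bases of $U_v(\mathfrak{n}^-)$ identifies $(\mathcal{L}', B')$ with $(\mathcal{L}(\infty), \mathcal{B}(\infty))$.

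I expect the main obstacle to lie in the reorientation step: the Kashiwara operators are intrinsic to $U_v(\mathfrak{n}^-)$ and orientation-free, whereas the PBW basis $B_{\Qu'}$ depends essentially on the orientation. The clean way around this is precisely to note that both $\mathscr{L}$ and the residue class $B \subset \mathscr{L}/v^{-1}\mathscr{L}$ have already been shown orientation-independent, which lets us choose, for each $i$ separately, the most convenient orientation (namely one making $i$ a sink) without having to track how the PBW vectors themselves transform under reorientation.
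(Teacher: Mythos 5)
The paper does not actually prove this statement: it is quoted directly from Lusztig \cite[Theorem 2.3]{Lu2}. So your proposal supplies an argument where the source supplies only a citation, and the right question is whether your sketch correctly reconstructs the argument underlying that citation (which is, in essence, Lusztig's).

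Your reduction is correct: by \cite[Theorem 3.2]{Lu1} the canonical basis $\mathscr{B}$ is a basis of $\mathscr{L}$ with $\pi(\mathscr{B})=B$, so $\mathcal{L}'=\mathscr{L}$ and $B'=B$, and it suffices to show the PBW pair $(\mathscr{L},B)$ is a crystal basis. Your strategy for that --- invoke the orientation--independence of $(\mathscr{L},B)$ from \cite[Proposition 2.3]{Lu1}, reorient so that $i$ is a sink, decompose $M=M_0\oplus S(i)^{\oplus n_i}$, and match the $F_i$-exponent with the string length --- is the right shape and is the standard proof. Two points in the middle step need repair, though. First, the Hall identity is written in the wrong order: with the paper's convention $u_{[M]}\star u_{[N]}=v^{\left<M,N\right>_R}\sum F^X_{M,N}(v^2)u_{[X]}$, the factor $N$ is the \emph{submodule}. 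The vanishing you invoke, $\Ext^1_{k\Qu'}(S(i),M_0)=0$ (true since $S(i)$ is projective at a sink), controls extensions $0\to M_0\to X\to S(i)^{n_i}\to 0$, and hence the product with $u_{[S(i)]}$ on the \emph{left}:
\[
u_{[S(i)]}^{(\star n_i)}\star f^{\Qu'}_{[M_0]}=v^{a(M)}f^{\Qu'}_{[M]}.
\]
Your written order $f^{\Qu'}_{[M_0]}\star u_{[S(i)]}^{(\star n_i)}$ would instead require $\Ext^1(M_0,S(i))=0$, which generally fails, so that product has several terms. Second, it is not true ``by definition'' that $\tilde f_i$ increments the $F_i$-exponent for a reduced word beginning with $s_i$: the operator $\tilde f_i$ is defined via the string decomposition $u=\sum_{n}F_i^{(n)}u_n$ with $e'_iu_n=0$, and to identify $F_i^{(n_i)}F^{\Qu'}_{[M_0]}$ with that decomposition one needs Lusztig's (nontrivial) identification $\ker e'_i=U_v(\mathfrak{n}^-)\cap T_iU_v(\mathfrak{n}^-)$, which is what guarantees that $F^{\Qu'}_{[M_0]}$ lies in $\ker e'_i$. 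With those two repairs, and acknowledging that the orientation--independence of $(\mathscr{L},B)$ and the characterization of $\ker e'_i$ are themselves substantial inputs from \cite{Lu1}, your outline does reproduce the argument behind the theorem.
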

\begin{rem}\label{verticesisoclasses}
The considerations above yield a parametrization of the vertices of the crystal graph of $U_v(\mathfrak{n}^-)$ as isomorphism classes of $k\Qu$-representations. This identification is of crucial use in the homological construction of the crystal graph recalled in the following.
\end{rem}

\subsection{Kashiwara operators}

In \cite{Rei} the crystal graph of $U(\mathfrak{n}^-)$ is realized as the set of isomorphism classes of $k\Qu$-modules.
To state the main result of loc. cit., we need the following definitions where we adopt the notations of Subsection \ref{ringelhallalgebra}.

\begin{defi}\label{definitionai}
\begin{itemize}
\item
The \emph{degree} of a non--zero Laurent polynomial $c\in \mathbb{Z}$[$v,v^{-1}$] is the smallest $d\in \mathbb{Z}$ such that $v^{-d}c\in \mathbb{Z}$[$v^{-1}$].
\item
For a $k\Qu$-module $M$ and $i\in I$ we define
$${ a_i^{\Qu}(M)}:=\displaystyle\max_{{[X]}}\deg c_i^{\Qu}(M,X),$$
where $u_{[S(i)]}\cdot f^{\Qu}_{{[M]}}=\displaystyle\sum_{{[X]}}c_i^{\Qu}(M,X)f^{\Qu}_{[X]}$.
\item For $u\in U_v(\mathfrak{n}^{-})$, let $\rho(u)$ be the largest integer $r$ such that $u\in F_i^{r}U_v(\mathfrak{n}^{-}).$
\end{itemize}
\end{defi}

\begin{thm}[{\cite[Proposition 3.2]{Rei}}]\label{reinekethm} Let $M$ be a $k\Qu$--module and $i\in I$. If $X$ is a $k\Qu$--module such that
\begin{equation}\label{eq:reinekemain}
 c_i^{\Qu}(M,X)=a_i^{\Qu}(M)\ge a_i^{\Qu}(X)-1,
\end{equation}
 then $a_i^{\Qu}(M)= a_i^{\Qu}(X)-1$, $\rho(\mathscr{F}^{\Qu}_{[M]})=a_i^{\Qu}(M)$ and $\mathscr{F}^{\Qu}_{[X]}=\tilde{f}_i \mathscr{F}^{\Qu}_{[M]}\modd v^{-1}\mathscr{L}.$
\end{thm}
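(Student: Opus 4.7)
The plan is to work throughout inside Ringel's Hall algebra via the isomorphism $\eta_\Qu$, so that the product $u_{[S(i)]} \star f^\Qu_{[M]}$ translates into $F_i\cdot F^\Qu_{[M]}=\sum_{[X]}c_i^\Qu(M,X)F^\Qu_{[X]}$, and use that $F^\Qu_{[M]}\equiv \mathscr{F}^\Qu_{[M]}\bmod v^{-1}\mathscr{L}$ so that the PBW and canonical basis structure constants agree on the top degree. The first task is to reinterpret $a_i^\Qu(M)$ intrinsically as the invariant $\rho(\mathscr{F}^\Qu_{[M]})$ from Definition \ref{definitionai}. One direction, $a_i^\Qu(M)\le \rho(\mathscr{F}^\Qu_{[M]})$, follows because if $\mathscr{F}^\Qu_{[M]}\in F_i^{r}U_v(\mathfrak n^-)$, then each coefficient $c_i^\Qu(M',X)$ appearing in $F_i\mathscr F^\Qu_{[M']}$ when we peel off powers of $F_i$ inductively picks up a factor $[r+1]$ of degree $r$. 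The reverse inequality requires comparing the two bases modulo $v^{-1}\mathscr L$ and exploiting that the lattice $\mathscr L$ is stable under multiplication by $F_i$.

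With this identification in hand, I would recast the hypothesis $c_i^\Qu(M,X)=a_i^\Qu(M)\ge a_i^\Qu(X)-1$ as a statement about the canonical basis expansion of $F_i\mathscr F^\Qu_{[M]}$. Writing
\begin{equation*}
F_i\mathscr F^\Qu_{[M]} = \sum_{[Y]} d_{M,Y}(v)\,\mathscr F^\Qu_{[Y]}, \qquad d_{M,Y}\in\mathbb Z[v,v^{-1}],
\end{equation*}
the coefficients $d_{M,Y}$ agree modulo lower $v$-degree with $c_i^\Qu(M,Y)$. The standard Kashiwara characterization of $\tilde f_i$ on the crystal lattice says that $\tilde f_i\bar{\mathscr F}^\Qu_{[M]}$ is the unique basis vector $\bar{\mathscr F}^\Qu_{[Y]}$ for which $d_{M,Y}$ attains the maximal degree, and that this maximal degree equals $\rho(\mathscr F^\Qu_{[Y]})-1=a_i^\Qu(Y)-1$. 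Hence once the identification $a_i^\Qu=\rho\circ\mathscr F^\Qu$ is established, the conditions $c_i^\Qu(M,X)=a_i^\Qu(M)$ and $a_i^\Qu(M)\ge a_i^\Qu(X)-1$ single out $X$ as the winning module and simultaneously force the equality $a_i^\Qu(M)=a_i^\Qu(X)-1$.

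To conclude $\mathscr F^\Qu_{[X]}=\tilde f_i\mathscr F^\Qu_{[M]}\bmod v^{-1}\mathscr L$, I would verify that no other term in the canonical-basis expansion can compete. Concretely, if some other $Y\ne X$ satisfied $\deg d_{M,Y}\ge \deg d_{M,X}$, then either $a_i^\Qu(Y)>a_i^\Qu(X)$, contradicting maximality since $c_i^\Qu(M,Y)\le a_i^\Qu(M)$, or a tie would contradict the uniqueness built into Kashiwara's characterization; the strict inequality obtained above for $X$ rules this out. The assertion $\rho(\mathscr F^\Qu_{[M]})=a_i^\Qu(M)$ then falls out as the common value of the two sides by construction.

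The main obstacle is the bridge between the combinatorial Hall-algebra quantity $a_i^\Qu(M)$, which is defined purely via structure constants over $\mathbb F_q$, and the representation-theoretic quantity $\rho(\mathscr F^\Qu_{[M]})$, which refers to divisibility by $F_i$ inside $U_v(\mathfrak n^-)$. Bridging them requires controlling the discrepancy between the PBW basis and the canonical basis with respect to a reduced expression of $w_0$ not necessarily beginning with $s_i$. I would handle this by choosing an adapted reduced expression starting with $s_i$ when $i$ is a source, treating the general case by tracking only information stable under the $v\to\infty$ semiclassical limit, where PBW and canonical bases agree and the $\rho$-filtration becomes transparent in terms of leading degrees of Hall polynomials.
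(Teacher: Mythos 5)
This theorem is cited from Reineke's paper (\cite[Proposition 3.2]{Rei}) without a proof being reproduced here, so I am evaluating your sketch on its own terms.

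Your conceptual plan --- identify $a_i^{\Qu}(M)$ with $\rho(\mathscr{F}^{\Qu}_{[M]})$ and then invoke Kashiwara's characterization of $\tilde{f}_i$ as the unique top-degree contribution in the canonical-basis expansion of $F_i\mathscr{F}^{\Qu}_{[M]}$ --- is the right framework, and the Kashiwara ingredient is correctly stated. However, the bridge between the PBW and canonical pictures has two genuine gaps. First, your assertion that ``the lattice $\mathscr{L}$ is stable under multiplication by $F_i$'' is false: if $b\in\mathscr{B}$ lies in $F_i^{(r)}\Ker e_i'$ with $r>0$, then $F_i b=[r+1]\tilde{f}_i b$ plus lower terms, and $[r+1]$ has degree $r>0$, so $F_i b\notin\mathscr{L}$. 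The whole point of the function $\rho$ (equivalently $\varepsilon_i$) is precisely to measure how far $F_i$ throws you out of $\mathscr{L}$, so this stability cannot be used as leverage.

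Second, the claim that the canonical-basis structure constants $d_{M,Y}$ ``agree modulo lower $v$-degree'' with the PBW structure constants $c_i^{\Qu}(M,Y)$ does not follow from unitriangularity alone. Writing $\mathscr{F}^{\Qu}_{[M]}=f^{\Qu}_{[M]}+\sum_{[Y]\ne[M]}\gamma_{M,Y}f^{\Qu}_{[Y]}$ with $\gamma_{M,Y}\in v^{-1}\mathbb{Z}[v^{-1}]$, the correction $F_i\cdot\gamma_{M,Y}f^{\Qu}_{[Y]}$ contributes terms of degree up to $a_i^{\Qu}(Y)-1$, and if some $a_i^{\Qu}(Y)$ exceeds $a_i^{\Qu}(M)+1$ these can dominate. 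To rule this out you need either explicit degree bounds on the Hall polynomials $c_i^{\Qu}(Y,X)$ along the triangular order, or a compatibility statement between the partial order governing the base change and the $a_i^{\Qu}$-values --- which is exactly the content Reineke establishes via the filtration $U_v(\mathfrak{n}^-)=\bigoplus_{r}F_i^{(r)}\Ker e_i'$ together with the explicit PBW/Hall combinatorics. Your final paragraph acknowledges this ``main obstacle'' but the proposed remedy (passing to the $v\to\infty$ limit) is not a proof; the substance of Reineke's argument lives precisely there, and it needs to be spelled out rather than deferred.
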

Thus, if the criterion (\ref{eq:reinekemain}) on the degree of the polynomial $c_i^{\Qu}(M,X)$ is fulfilled, the Kashiwara operator $\tilde{f}_i$ ($i\in I$) maps the isomorphism class $[M]$ to the isomorphism class $[X]$.

In \cite{Rei} it is proved that, for certain choice of orientations for $\Qu$, we can find for any $M\in k\Qu-\modd$ and any $i\in I$ such a $k\Qu$-module $X$ fulfilling the criterion. This is done by classifying all middle terms of short exact sequences of $k\Qu$-modules of the form
$$0\rightarrow M \rightarrow X \rightarrow S(i) \rightarrow 0,$$
which allows one to express the function $a_i^{\Qu}$ in terms of multiplicities of certain indecomposable direct summands of $M$. Let us recall these results in more details.

\begin{defi}\label{special} A quiver $\Qu$ is called \emph{special} if $\dim\Hom_{k\Qu} (X, S(i) ) \le 1$ for all $i \in I$ and all indecomposable $k\Qu$--modules $X$.
\end{defi}
For the rest of this section we make the following assumption:
\begin{assumption}
{\bf $\Qu$ is a fixed special Dynkin quiver.}
\end{assumption}
In Section \ref{specialclassification}, we examine this condition further and classify all possible orientations for Dynkin diagrams which yield a special Dynkin quiver. In particular, this shows that we can find at least one orientation that is special for any simply-laced type Dynkin quiver except $E_8$.

Fix $i\in I$, we introduce two sets of $k\Qu$--modules which play an important role in the following. We first define
 $$\mathscr{P}_i(\Qu):=\{X\in k\Qu-\modd\mid X \text{ is indecomposable and }\dim\Hom_{k\Qu}(X,S(i))\ne 0\}.$$

On $\mathscr{P}_i(\Qu)$ we have a relation $\kg$ given by
\begin{equation}\label{eq:preorder}
X \kg Y \iff \Hom_{k\Qu}(X,Y)\ne 0.
\end{equation}

The following proposition shows that this is a partial order on $\mathscr{P}_i(\Qu)$.

\begin{prop}[{\cite[Proposition 4.3.]{Rei}}]\label{partialorder} Let $X,Y$ be in $\mathscr{P}_i(\Qu)$. If there is a path from $[X]$ to $[Y]$ in the Auslander-Reiten quiver $\Gamma_{\Qu}$ of $\Qu$, then there exists a map $f\in \Hom_{k\Qu}(X,Y)$ inducing an isomorphism $\Hom_{k\Qu}(Y,S(i))\xrightarrow{\sim} \Hom_{k\Qu}(X,S(i))$. In particular, $\mathscr{P}_i(\Qu)$ is a poset.
\end{prop}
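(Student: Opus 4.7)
The plan is to exhibit $f$ as a composition of irreducible morphisms along the given Auslander--Reiten path and then to show that the resulting pullback on $\Hom_{k\Qu}(-, S(i))$ is nonzero. Under the special assumption we have $\dim_{k}\Hom_{k\Qu}(Z, S(i)) \leq 1$ for every indecomposable $Z$, so both $\Hom_{k\Qu}(X, S(i))$ and $\Hom_{k\Qu}(Y, S(i))$ are one-dimensional (they are nonzero precisely because $X, Y \in \mathscr{P}_i(\Qu)$). Hence any nonzero linear map between them is automatically an isomorphism, so it is enough to produce $f: X \to Y$ together with some $\psi: Y \to S(i)$ such that $\psi \circ f \neq 0$.

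I would first note that every intermediate vertex $[X_k]$ on the AR path $[X] = [X_0] \to [X_1] \to \cdots \to [X_n] = [Y]$ automatically lies in $\mathscr{P}_i(\Qu)$: concatenating the sub-path $[X_k] \to \cdots \to [X_n]$ with a path from $[Y]$ to $[S(i)]$ in $\Gamma_{\Qu}$ (which exists because $Y \in \mathscr{P}_i(\Qu)$) produces a path from $[X_k]$ to $[S(i)]$, and in Dynkin type the existence of such a path in the AR quiver forces $\Hom_{k\Qu}(X_k, S(i)) \neq 0$. Choosing an irreducible morphism $f_k: X_{k-1} \to X_k$ for each arrow, I would prove by induction on $n$ that $f := f_n \circ \cdots \circ f_1$ induces an isomorphism. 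Setting $h := f_n \circ \cdots \circ f_2$, the induction hypothesis gives that $h^*$ is an isomorphism, and since $f^* = f_1^* \circ h^*$, the inductive step reduces to the base case $n = 1$: for an irreducible morphism $g: X \to X_1$ between modules in $\mathscr{P}_i(\Qu)$, the pullback $g^*: \Hom_{k\Qu}(X_1, S(i)) \to \Hom_{k\Qu}(X, S(i))$ is nonzero.

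For the base case I would apply $\Hom_{k\Qu}(-, S(i))$ to the Auslander--Reiten sequence $0 \to \tau X_1 \to E \to X_1 \to 0$ (or to the minimal projective presentation if $X_1$ is projective), write $E = X \oplus E'$ with the summand $X \to X_1$ being the given $g$, and exploit the resulting injection $\Hom_{k\Qu}(X_1, S(i)) \hookrightarrow \Hom_{k\Qu}(X, S(i)) \oplus \Hom_{k\Qu}(E', S(i))$. The generator of the one-dimensional source must have nonzero image in at least one summand. The main obstacle will be to rule out the alternative that this image lies entirely inside $\Hom_{k\Qu}(E', S(i))$. This is where the special assumption enters decisively: it controls the shape of the hammock $\mathscr{P}_i(\Qu)$ inside $\Gamma_{\Qu}$ finely enough (all relevant $\Hom(-, S(i))$-spaces being one-dimensional) that such a cancellation is incompatible with the existence of the given irreducible arrow $[X] \to [X_1]$, and one can verify this by a combinatorial case analysis of the indecomposable summands of $E$ belonging to $\mathscr{P}_i(\Qu)$.

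Finally, the poset claim is a formal consequence of the main statement together with the acyclicity of $\Gamma_{\Qu}$: reflexivity is clear; transitivity follows by composing the isomorphisms supplied by the main statement (given $X \kg Y$ and $Y \kg Z$, the produced morphisms compose to a nonzero map $X \to Z$, so $X \kg Z$); and antisymmetry reflects the classical fact that for a Dynkin quiver $\Qu$ nonzero morphisms cannot simultaneously exist in both directions between two non-isomorphic indecomposable $k\Qu$-modules.
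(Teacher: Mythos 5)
The paper only cites Reineke for this proposition, so there is no in-paper proof to compare against; I will evaluate your argument on its own terms. It has two genuine gaps, one of which is a false intermediate claim.

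Your key step is the assertion that every intermediate vertex $[X_k]$ of the chosen AR-path lies in $\mathscr{P}_i(\Qu)$, justified by the claim that in Dynkin type ``the existence of such a path in the AR quiver forces $\Hom_{k\Qu}(X_k,S(i))\neq 0$.'' This is false for types $D$ and $E$. Take $\Qu$ of type $D_4$ with all three spoke vertices $1,2,3$ pointing to the central sink $c$; this orientation is special. Writing dimension vectors as $(a,b,c;d)$ (three spokes, then center), the AR quiver contains the path $P(1)=(1,0,0;1)\to I(c)=(1,1,1;2)\to \tau^{-1}P(1)=(0,1,1;1)$, yet $\Hom(P(1),\tau^{-1}P(1))=(\tau^{-1}P(1))_1^{*}=0$, so a path does not imply a nonzero $\Hom$. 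Worse, the consequence you draw also fails: taking $i=1$ (a source, $\Hom(X,S(1))\cong X_1^{*}$), one has $I(c),(1,1,1;1)\in\mathscr{P}_1(\Qu)$ and an AR-path $I(c)\to(0,1,1;1)\to(1,1,1;1)$, but the intermediate module $(0,1,1;1)$ is not in $\mathscr{P}_1(\Qu)$. So for an arbitrary path the intermediate vertices need not belong to the hammock, and your induction cannot start; the correct statement requires either choosing a path that stays inside $\mathscr{P}_i(\Qu)$ (and proving such a path exists) or arguing differently.

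The second gap is that the base case, which you acknowledge is where the real content lies, is only sketched. Decomposing $E=X\oplus E'$ in the AR sequence and noting $\Hom(X_1,S(i))\hookrightarrow\Hom(X,S(i))\oplus\Hom(E',S(i))$ is fine, but ruling out that the image of the generator lands entirely in the $E'$-component is precisely the hard step, and ``one can verify this by a combinatorial case analysis'' is a promissory note rather than an argument. Until both issues are resolved -- a correct replacement for the false path-implies-nonzero-Hom claim, and an actual base case argument -- the proof is incomplete. The reduction to producing a nonzero $f^{*}$ via one-dimensionality and the deduction of the poset structure from acyclicity of $\Gamma_{\Qu}$ are, however, correct.
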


Recall that an \emph{antichain} is a subset of a poset such that no two (distinct) elements are comparable. We define
$$\mathscr{S}_i(\Qu):=\{V=\displaystyle\bigoplus_{j=1}^{k}X_j\mid\{X_1,X_2,\ldots,X_k\}\text{ is an antichain in }\mathscr{P}_i(\Qu)\}.$$
On $\mathscr{S}_i(\Qu)$ we have a partial order induced by the ordering ${\kg}$ on $\mathscr{P}_i(\Qu)$. By abuse of notation we denote this ordering also by $\kg$. It is given by $(V,V'\in\mathscr{S}_i(\Qu))$:
\begin{center}
$V \unlhd V'$ if and only if $\dim\Hom_{k\Qu}(B, V') \ne 0$ for each indecomposable direct summand $B$ of $V$.
\end{center}
Note that we always have $\mathscr{P}_i(\Qu) \subset \mathscr{S}_i(\Qu)$ as the set of trivial antichains.

\begin{ex}\label{15} We give two examples of the sets  $\mathscr{P}_i(\Qu)$ and  $\mathscr{S}_i(\Qu)$. See \cite[Section 8]{Rei} for more examples. Following Proposition \ref{partialorder}, the set $\mathscr{P}_i(\Qu)$ can be interpreted as a full subgraph of the Auslander-Reiten quiver. Recall that, by Gabriel's Theorem, the indecomposable $k\Qu$-modules are uniquely determined by their dimension vector. We thus use the dimension vector to denote the indecomposable module having this dimension.
\begin{enumerate}

\item For $\Qu=1 \leftarrow 2 \leftarrow 3$, the poset $\mathscr{P}_3(Q)$ \ is the union of all framed isomorphism classes of modules:
\begin{equation*}
\xymatrix{
& &\fbox{\bf{{[111]}}} \ar@{->}[rd]& &\\
&[011] \ar@{->}[ru] \ar@{->}[rd]& & \fbox{\bf{[110]}}\ar@{->}[rd]\ar@{-->}[ll]^{\tau} &\\
[001] \ar@{->}[ru]& & [010] \ar@{->}[ru] \ar@{-->}[ll]^{\tau} & & \fbox{\bf{[100]}}\ar@{-->}[ll]^{\tau} \\
}
\end{equation*}
Here $\mathscr{S}_3(\Qu)=\mathscr{P}_3(\Qu)$, i.e. $\mathscr{P}_3(\Qu)$ is a chain. The elements can be ordered as follows:
$$[111] \kg [110] \kg [100].$$

\item\label{Qalternatierend} Take $\Qu=1 \leftarrow 2 \rightarrow 3$. We find that $\mathscr{P}_2(\Qu)$ has the following shape:
\begin{equation*}
\xymatrix{
[100]\ar@{->}[rd]& & \fbox{\bf{[011]}} \ar@{->}[rd]\ar@{-->}[ll]^{\tau} & \\
&\fbox{\bf{[111]}} \ar@{->}[ru] \ar@{->}[rd]& & \fbox{\bf{[010]}}\ar@{-->}[ll]^{\tau} \\
[001] \ar@{->}[ru]& & \fbox{\bf{[110]}} \ar@{->}[ru] \ar@{-->}[ll]^{\tau}  & \\
}
\end{equation*}
Here again we put a frame around every isormorphism class of elements of $\mathscr{P}_2(\Qu)$.

This time $\mathscr{P}_2(\Qu) \subsetneq \mathscr{S}_2(\Qu)$. We have a non--trivial antichain given by each $V\in [011 \oplus 110]$. So we have two maximal chains in $\mathscr{S}_2(\Qu):$
$$ [111] \kg [011] \kg [011 \oplus 110] \kg [010]$$
$$ [111] \kg [110] \kg [011 \oplus 110] \kg [010].$$
\end{enumerate}
\end{ex}

\begin{rem}\label{minimal} From the definition we directly get that there is always a unique $\kg$-minimal element in $\mathscr{S}_i(\Qu)$ for a fixed $i\in I$, namely the projective cover $P(i)$ of $S(i)$.
\end{rem}

We are now able to state the classification of middle terms of extension by $S(i)$. For that let $l(V)$ be the set of all $B\in \mathscr{P}_i(\Qu)$ which are minimal with the property that $B\ntrianglelefteq V$.

\begin{thm}[{\cite[Corallary 4.4, Proposition 4.5]{Rei}}]\label{U} Given a $k\Qu$-module $M$ and $i\in I$, the possible middle terms of exact sequences
$$0\rightarrow M \rightarrow X \rightarrow S(i)\rightarrow 0$$ are in $1:1$-correspondence with the elements $V\in \mathscr{S}_i(\Qu)$ such that $\tau B$ is a direct summand of $M$ for each $B\in l(V)$. The bijection is given via the map
$$V \mapsto X=N\oplus V,$$
where $M=N\oplus \bigoplus_{B\in l(V)} \tau B$.
\end{thm}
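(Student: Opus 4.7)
The strategy is to establish the claimed bijection $V \mapsto X = N \oplus V$ in both directions.

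For the construction direction, given a valid antichain $V = \bigoplus_j X_j \in \mathscr{S}_i(\Qu)$, I plan to produce the short exact sequence directly: since $\Qu$ is special, each $X_j \in \mathscr{P}_i(\Qu)$ admits a surjection $X_j \twoheadrightarrow S(i)$ (unique up to scalar), and summing these yields a surjection $\sigma \colon V \twoheadrightarrow S(i)$. The main content is the identification $\ker\sigma \cong \bigoplus_{B \in l(V)} \tau B$. For this I would combine the almost split sequences $0 \to \tau B \to E_B \to B \to 0$ for $B \in l(V)$ with the minimality built into the definition of $l(V)$, which ensures that no $\tau B$ is absorbed by a strictly larger element of the poset $\mathscr{P}_i(\Qu)$. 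Writing $M = N \oplus \bigoplus_{B \in l(V)} \tau B$ and taking the direct sum with the split sequence on $N$ then produces the required extension with middle term $N \oplus V$.

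Conversely, given an SES $0 \to M \to X \to S(i) \to 0$, I decompose $X$ into indecomposables and let $V$ collect those summands that project nontrivially onto $S(i)$; these automatically lie in $\mathscr{P}_i(\Qu)$. If two such summands $X_j, X_k$ of $V$ satisfy $X_j \kg X_k$, then Proposition \ref{partialorder} provides a map $f \colon X_j \to X_k$ compatible with the projections to $S(i)$, and the automorphism of $X_j \oplus X_k$ induced by $f$ modifies the direct-sum decomposition of $X$ so as to remove $X_j$ from $V$. Iterating, one arrives at a decomposition $X = N \oplus V$ with $V$ an antichain. It then remains to show that $\tau B$ is a direct summand of $M$ for each $B \in l(V)$: applying $\Hom_{k\Qu}(B,-)$ to the SES and using $\Hom(B, V) = 0$ (a consequence of $B \not\kg V$), one obtains a nontrivial connecting map $\Hom(B, S(i)) \hookrightarrow \Ext^1(B, M)$; via Auslander-Reiten duality this non-vanishing pinpoints $\tau B$ as a summand of $M$, while the minimality of $B$ inside $l(V)$ ensures the extension class is concentrated precisely at $\tau B$ rather than at any strictly larger element of $\mathscr{P}_i(\Qu)$.

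The hardest step will be extracting $\tau B$ as an actual direct summand of $M$, rather than merely as a subobject. This demands delicate control via the minimality of $B$, the antichain property of $V$, and the specialness hypothesis on $\Qu$ (which tightly constrains all relevant Hom-spaces). The remaining bookkeeping -- that distinct valid antichains yield non-isomorphic middle terms, and that the assignment $V \mapsto N \oplus V$ is genuinely bijective -- should then follow routinely from the preceding analysis.
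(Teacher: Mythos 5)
Theorem \ref{U} is not proved in this paper: it is imported verbatim from Reineke \cite[Corollary 4.4, Proposition 4.5]{Rei}, so there is no in-paper argument to compare yours against. Judged on its own terms, your outline organizes the problem sensibly (produce the sequence from $V$, recover $V$ from the sequence), but the mathematical core is left unestablished.

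The single claim on which both directions of your argument rest is that for an antichain $V$ with its essentially unique surjection $\sigma\colon V\twoheadrightarrow S(i)$ one has $\ker\sigma\cong\bigoplus_{B\in l(V)}\tau B$. Everything else is bookkeeping, but this identity is the entire theorem. Your proposal to obtain it by ``combining the almost split sequences $0\to\tau B\to E_B\to B\to 0$ with the minimality built into $l(V)$'' is not an argument: those sequences end in $B$, not in $S(i)$ or $V$, and no mechanism is given for how they assemble into a computation of $\ker\sigma$. (The claim is correct -- one can check it, e.g., for $V=P(i)$, where it reduces to $\rad P(i)\cong\bigoplus_{B\text{ covering }P(i)}\tau B$ -- but a proof requires genuine work with the poset structure of $\mathscr{P}_i(\Qu)$ and the specialness hypothesis.)

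In the converse direction there is a concrete error. From $\Hom(B,V)=0$ and $\Hom(B,S(i))\neq 0$ you correctly get $\Ext^1_{k\Qu}(B,M)\neq 0$, but Auslander--Reiten duality then yields $\Ext^1(B,M)\cong D\overline{\Hom}(M,\tau B)$, i.e.\ a nonzero map $M\to\tau B$ in the stable category (equivalently, a pushout of the almost split sequence along some map $\tau B\to M$). Neither statement asserts that $\tau B\to M$ is a split monomorphism, so ``this non-vanishing pinpoints $\tau B$ as a summand of $M$'' does not follow; you flag this yourself as ``the hardest step,'' but the gap is not a matter of delicacy -- the inference as stated is simply invalid, and the intended conclusion would again have to come from the structure of $\ker\sigma$. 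Finally, you do not address well-definedness and injectivity of $V\mapsto N\oplus V$: one needs that $V$ and $\bigoplus_{B\in l(V)}\tau B$ share no indecomposable summands so that Krull--Schmidt recovers $V$ from the pair $(M,X)$, and this is not discussed.
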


Recall that for $k\Qu$-modules $M$ and $B$ we denote by $\mu_B(M)$ the multiplicity of $B$ as a direct summand of $M$.
\begin{defi}\label{F(M,V)} Fix $i\in I$. For a $k\Qu$--module $M$ and an element $V\in\mathscr{S}_i(\Qu)$ define
\begin{equation*}
F_i(M,V):=\displaystyle\sum_{B\in\mathscr{P}_i(Q); \ B \kg V} \mu_B(M)-\mu_{\tau B}(M).
\end{equation*}
\end{defi}

Let $0\rightarrow M \rightarrow X \rightarrow S(i) \rightarrow 0$ be an exact sequence and $V\in \mathscr{S}_i(\Qu)$ corresponding to $X$ via the bijection given in Theorem \ref{U}. Then it is shown in \cite[Proposition 5.2]{Rei} that
\begin{equation}\label{eq:degree}
\deg{c_i}^{\Qu}(M,X)=F_i(M,V).
\end{equation}
In this language one verifies that the criterion used in Theorem \ref{reinekethm} is always fulfilled.

\begin{prop}[{\cite[Proposition 6.1]{Rei}}]\label{constructionV} Fix $i\in I$. Let $M$ be a $k\Qu$--module, $V_0\in \mathscr{S}_i(\Qu)$ such that $F_i(M,V_0)=a_i^{\Qu}(M)$ and $V_0$ is $\kg$--maximal with this property. Then $U_0:=\oplus_{B\in l(V_0)}\tau B$ is a direct summand of $M$. Set $X=M'\oplus V_0$ where $M=M'\oplus U_0$. Then
$$a_i^{\Qu}(X)= a_i^{\Qu}(M)+1.$$
\end{prop}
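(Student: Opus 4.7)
The plan is to verify the two claims separately. First I would show that $U_0 = \bigoplus_{B\in l(V_0)}\tau B$ is a direct summand of $M$ (so that $M = M'\oplus U_0$ and hence $X = M'\oplus V_0$ is well-defined); for this I argue by contradiction, exploiting the $\kg$-maximality of $V_0$. Second I would reduce the equality $a_i^{\Qu}(X) = a_i^{\Qu}(M) + 1$ to Theorem \ref{reinekethm} by establishing the upper bound $a_i^{\Qu}(X) \le a_i^{\Qu}(M) + 1$.

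For the first claim, suppose toward contradiction that $\mu_{\tau B}(M) = 0$ for some $B\in l(V_0)$. I would produce a strictly larger $V_1 \rhd V_0$ in $\mathscr{S}_i(\Qu)$ by adjoining $B$ to the antichain of $V_0$ and discarding those summands $X_j$ of $V_0$ with $X_j \kg B$. The hypothesis $B\ntrianglelefteq V_0$, that is $\Hom(B,V_0)=0$, guarantees that $B$ together with the remaining $X_j$'s forms an antichain, and $V_0\lhd V_1$ strictly because $B$ itself fails to be $\unlhd V_0$. The pivotal combinatorial identity is
\[
F_i(M,V_1) - F_i(M,V_0) \;=\; \mu_B(M) - \mu_{\tau B}(M) \;\ge\; 0,
\]
which rests on the observation that the minimality of $B$ inside $l(V_0)$ forces the only $B'\in \mathscr{P}_i(\Qu)$ with $B'\unlhd V_1$ but $B'\ntrianglelefteq V_0$ to be $B$ itself. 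If $V_1$ already satisfies the Theorem \ref{U} summand condition for $M$, the $\kg$-maximality of $V_0$ is contradicted immediately. Otherwise one selects a further $B_1\in l(V_1)$ with $\mu_{\tau B_1}(M)=0$ and repeats the construction. Finiteness of $\mathscr{S}_i(\Qu)$ forces the strictly increasing chain $V_0\lhd V_1\lhd V_2\lhd\cdots$ to terminate at some $V^*$ compatible with $M$, along which $F_i(M,\cdot)$ has never decreased, so $F_i(M,V^*)\ge a_i^{\Qu}(M)$. Since $V^*$ is compatible we also have $F_i(M,V^*)\le a_i^{\Qu}(M)$, hence equality; combined with $V_0\lhd V^*$ this contradicts the $\kg$-maximality of $V_0$.

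For the second claim, the Theorem \ref{U} bijection together with Equation (\ref{eq:degree}) yields
\[
\deg c_i^{\Qu}(M,X) = F_i(M,V_0) = a_i^{\Qu}(M),
\]
so the first half of hypothesis (\ref{eq:reinekemain}) of Theorem \ref{reinekethm} is already in place. It remains to prove $a_i^{\Qu}(X) \le a_i^{\Qu}(M) + 1$. My strategy is to associate to any $V'\in\mathscr{S}_i(\Qu)$ compatible with $X$ a $V''\in\mathscr{S}_i(\Qu)$ compatible with $M$ satisfying $F_i(M,V'')\ge F_i(X,V')-1$. The recipe tracks how the summand multiplicities change under the passage $M = M'\oplus U_0 \rightsquigarrow X = M'\oplus V_0$, and then adjusts $V'$ by a $\kg$-enlargement procedure — in the spirit of the iteration from the first part — so that the resulting loss amounts to at most one unit. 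Once this bound is in hand, Theorem \ref{reinekethm} immediately yields $a_i^{\Qu}(X) = a_i^{\Qu}(M) + 1$.

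The main obstacle throughout is the delicate combinatorics governing how $F_i$ transforms under simultaneous modifications of the antichain $V$ and the module $M$: one must carefully monitor the overlaps among the sets $\{X_j\}$, $\{\tau B : B\in l(V_0)\}$, their $\tau^{\pm 1}$-translates, and the ambient $\mathscr{P}_i(\Qu)$. The minimality condition defining $l(V)$, together with Proposition \ref{partialorder} that identifies $\unlhd$ on $\mathscr{P}_i(\Qu)$ with the path order on the Auslander-Reiten quiver, is the structural input which collapses these naively complicated corrections to the single multiplicity contributions featuring in the identities above.
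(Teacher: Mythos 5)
Since the paper takes Proposition~\ref{constructionV} verbatim from Reineke and supplies no proof, there is no internal argument to compare against; I assess your attempt on its own merits. Your argument for the first claim is correct. The construction of $V_1$ from $V_0$ by adjoining a $B\in l(V_0)$ with $\mu_{\tau B}(M)=0$ and discarding the summands $X_j\kg B$ does produce a strictly larger element of $\mathscr{S}_i(\Qu)$, the minimality of $B$ in $l(V_0)$ does force $\{B'\in\mathscr{P}_i(\Qu):B'\kg V_1,\ B'\ntrianglelefteq V_0\}=\{B\}$, and hence $F_i(M,V_1)-F_i(M,V_0)=\mu_B(M)\geq 0$. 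Iterating and using finiteness to terminate at a $V^*$ compatible with $M$, the contradiction to $\kg$-maximality of $V_0$ follows from $a_i^{\Qu}(M)=\max_{V \text{ compatible}}F_i(M,V)$, which is available from Definition~\ref{definitionai} together with Theorem~\ref{U} and Equation~(\ref{eq:degree}).

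The second claim, however, is left with a genuine gap. Reducing $a_i^{\Qu}(X)=a_i^{\Qu}(M)+1$ to the inequality $a_i^{\Qu}(X)\leq a_i^{\Qu}(M)+1$ via Theorem~\ref{reinekethm} is the right move, but your plan to "adjust $V'$ by a $\kg$-enlargement procedure so that the resulting loss amounts to at most one unit" is precisely the non-trivial content, and you do not prove it. The quantitative heart of the matter is the bound
\[
F_i(X,V)-F_i(M,V)\ =\ F_i(V_0,V)-F_i(U_0,V)\ \leq\ 1
\]
(using additivity of $F_i$ in the first argument across $X=M'\oplus V_0$ and $M=M'\oplus U_0$). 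This is a substantive combinatorial fact about how the summands of $V_0$ and $U_0=\bigoplus_{B\in l(V_0)}\tau B$ and their $\tau$-translates interlace in $\mathscr{P}_i(\Qu)$; it is exactly Reineke's Lemma~6.3, which this paper invokes separately in the proof of Proposition~\ref{homologicalkashiwara}. Nothing in the "tracking multiplicities" or "$\kg$-enlargement" sketch explains why this difference cannot exceed $1$, so as written the second half of the proposition is not established.
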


\begin{rem} Note that, by Theorem \ref{reinekethm}, this implies that there is a unique $\kg$-minimal antichain $V_0\in \mathscr{S}_i(\Qu)$ such that $F_i(M,V)$ is maximal.
\end{rem}

Recall from Remark \ref{verticesisoclasses} that the vertices of the crystal graph of $U_v(\mathfrak{n}^-)$ are parametrized by the isomorphism classes of $k\Qu$-representations. We therefore set
$$B^{\mathscr{H}}(\infty)=\{b_{[M]}\mid  M \in k\Qu-\modd\}.$$
The rest of this section is devoted to recalling how Theorem \ref{reinekethm} and Proposition \ref{constructionV} can be used to give a recipe for the actions of the Kashiwara operators on $B^{\mathscr{H}}(\infty)$. For further details see \cite[Chapter 7]{Rei}.

\begin{defi}\label{ee} Let $M$ be a $k\Qu$--module. We define ${\pl_i M }:=X$ where $X$ is obtained by the following recipe:
\begin{itemize}
\item For all $V\in \mathscr{S}_i(\Qu)$ compute the value
$$F_i(M,V)=\displaystyle\sum_{B\kg V} \mu_{B}(M)-\mu_{\tau B}(M).$$
\item Let $V_0$ be the $\kg$--maximal antichain where the maximal value of $F_i(M,V)$ is reached.
\item Let $U_0$ be the sum of all $\tau B$ such that $B\in \mathscr{P}_i(\Qu)$ and $B\ntrianglelefteq V_0$ minimally.
\item Set $X=M'\oplus V_0$ where $M=M'\oplus U_0.$
\end{itemize}
\end{defi}

\begin{rem} Note that $U_0$ must be a direct summand of $M$ by Proposition \ref{constructionV}.
\end{rem}

Thus we get by Theorem \ref{reinekethm} with (\ref{eq:degree}) for $b_{{[M]}}\in B^{\mathscr{H}}(\infty)$:
\begin{align}
\tilde{f}_ib_{[M]}&=b_{[\pl_iM]}, \label{eq:operatorf} \\
\varepsilon_i\left(b_{[M]}\right)&=a_i^{\Qu}(M). \label{eq:functione}
\end{align}
Using the description of the Kashiwara operator $\tilde{f}_i$ in (\ref{eq:operatorf}), we can determine the action of the partial inverse operator $\tilde{e}_i$ on $B^{\mathscr{H}}(\infty)$.

\begin{lem} Let $M$ be a $k\Qu$-module with the property that there exists an antichain $V\in \mathscr{S}_i(\Qu)$ such that $F_i(M,V)>0$. Let $V'_0$ be the $\kg$--minimal antichain with the property that $F_i(M,V'_0)=a_i^{\Qu}(M)$. Then $V'_0$ is a direct summand of $M$.
\end{lem}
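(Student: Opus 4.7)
My plan is to argue by contradiction. Suppose $V'_0=B_1\oplus\cdots\oplus B_k$ is the $\kg$-minimal antichain with $F_i(M,V'_0)=a_i^\Qu(M)$ but some $B_j$ is not a direct summand of $M$, so $\mu_{B_j}(M)=0$ and hence $g(B_j):=\mu_{B_j}(M)-\mu_{\tau B_j}(M)\le 0$. The goal is to exhibit a strictly $\kg$-smaller antichain $\widetilde V\lhd V'_0$ in $\mathscr{S}_i(\Qu)$ with $F_i(M,\widetilde V)\ge a_i^\Qu(M)$, contradicting minimality.

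When $k\ge 2$, testing minimality against $W:=V'_0\setminus\{B_j\}$ (which is an antichain and strictly $\lhd V'_0$) produces, by subtracting the defining formula for $F_i$,
\[
\sum_{B\in\mathcal{I}_j}g(B)>0,\qquad \mathcal{I}_j:=\{B\in\mathscr{P}_i(\Qu):\, B\unlhd B_j\text{ and }B\not\unlhd B_m\text{ for all }m\neq j\}.
\]
Together with $g(B_j)\le 0$ this forces some $B^*\in\mathcal{I}_j\setminus\{B_j\}$ with $\mu_{B^*}(M)\ge 1$, so $M$ has a direct summand sitting strictly $\unlhd$-below $B_j$ in $\mathscr{P}_i(\Qu)$. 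To manufacture $\widetilde V$ I use the Auslander--Reiten sequence $0\to\tau B_j\to E\to B_j\to 0$ (handling $B_j$ projective separately, where $\tau B_j=0$ forces $g(B_j)=0$ and $W$ already achieves the maximum). Decomposing $E=\bigoplus_\ell E_\ell$, Proposition~\ref{partialorder} applied to the path $E_\ell\to B_j\to S(i)$ in $\Gamma_\Qu$ shows each $E_\ell\in\mathscr{P}_i(\Qu)$, and I set $\widetilde V:=(V'_0\setminus\{B_j\})\oplus\bigoplus_\ell E_\ell$, retaining one representative per isomorphism class and pruning any $E_\ell$ comparable to some $B_m$ ($m\neq j$) to preserve the antichain condition. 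The almost-split property guarantees that any indecomposable $B''\in\mathscr{P}_i(\Qu)$ with $B''\unlhd B_j$ and $B''\neq B_j$ maps non-trivially to some $E_\ell$; hence $\{B:B\unlhd V'_0\}\setminus\{B_j\}\subseteq\{B:B\unlhd\widetilde V\}$, and (up to the correction terms discussed below) $F_i(M,\widetilde V)\ge F_i(M,V'_0)-g(B_j)\ge F_i(M,V'_0)$, the required contradiction. The case $k=1$ proceeds analogously, skipping the removal step and using $\bigoplus_\ell E_\ell$ directly as the test antichain.

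\textbf{The main obstacle} is a careful set-theoretic comparison of $\{B:B\unlhd V'_0\}$ and $\{B:B\unlhd\widetilde V\}$. Specifically, one must control the possible ``extra'' indecomposables $B''$ with $B''\unlhd E_\ell$ but $B''\not\unlhd B_j$ (i.e.\ for which the composition $B''\to E_\ell\to B_j$ vanishes on the selected map), and confirm that such $B''$ either do not arise or contribute non-negatively to the sum. Here I expect to use the specialness of $\Qu$ (which forces $\dim\Hom(-,S(i))\le 1$ on indecomposables in $\mathscr{P}_i(\Qu)$) together with the poset structure from Proposition~\ref{partialorder}, ensuring that the canonical morphism $B''\to B_j$ lifting any non-zero element of $\Hom(B'',S(i))$ factors compatibly through $E$. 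Establishing that the pruning step preserves both the antichain property and the inclusion of support sets is the technical heart of the argument; the remainder is bookkeeping with $g(B)$.
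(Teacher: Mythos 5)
Your argument follows a genuinely different (and considerably heavier) route than the paper's, and it is not complete as written.

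The paper's proof is a short two-case analysis with no Auslander--Reiten theory at all. It assumes $V'_0$ is not a summand of $M$ and splits into: (i) every indecomposable summand $B$ of $V'_0$ has $\mu_B(M)=0$; here one observes that not all summands can be projective (since the unique projective $P(i)$ in $\mathscr{P}_i(\Qu)$ is $\kg$-minimal and $\mu_{P(i)}(M)=0$ would force $F_i(M,V'_0)\le 0$), and then produces a strictly smaller antichain $\widetilde V_0\lhd V'_0$ with $F_i(M,V'_0)\le F_i(M,\widetilde V_0)$; (ii) some summand $B$ of $V'_0$ has $\mu_B(M)\ne 0$; here one picks a summand $\widetilde B$ with $\mu_{\widetilde B}(M)=0$, sets $\widetilde V_0=V'_0\ominus\widetilde B$, and argues $F_i(M,V'_0)\le F_i(M,\widetilde V_0)$ with $\widetilde V_0\lhd V'_0$. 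In both cases the conclusion contradicts $\kg$-minimality of $V'_0$. In particular, the paper argues that \emph{deleting} a multiplicity-zero summand cannot decrease $F_i$; you argue in the opposite direction, assuming via minimality that deletion strictly decreases $F_i$ and then trying to reach a contradiction by \emph{adding} the AR middle term of $B_j$.

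The problem with your route is that the heart of it is missing, as you acknowledge. Concretely: (1) you have not shown that $\{B: B\unlhd V'_0\}\setminus\{B_j\}\subseteq\{B: B\unlhd\widetilde V\}$ survives the pruning step -- if $E_\ell$ is removed because it is comparable to some $B_m$, a $B$ with $B\unlhd E_\ell$ but $B\ntrianglelefteq B_m$ for all $m\ne j$ may fail to lie below $\widetilde V$; (2) you have not controlled the extra indecomposables $B''\unlhd E_\ell$ with $B''\ntrianglelefteq B_j$, which may well contribute \emph{negatively} to $F_i(M,\widetilde V)$, and the claimed inequality $F_i(M,\widetilde V)\ge F_i(M,V'_0)-g(B_j)$ is simply asserted ``up to correction terms''; (3) you have not verified that $\bigoplus_\ell E_\ell$ (or the pruned version) is actually an antichain in $\mathscr{P}_i(\Qu)$, which is required for $\widetilde V\in\mathscr{S}_i(\Qu)$. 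The derivation of $\sum_{B\in\mathcal{I}_j}g(B)>0$ and the existence of $B^*$ with $\mu_{B^*}(M)\ge 1$ are fine, but they are not used in a way that closes the argument, and the AR machinery introduces more bookkeeping than it resolves. The paper's proof avoids all of this by working directly with the multiplicities; I would recommend you aim for that simpler route rather than trying to patch the pruning step.
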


\begin{proof}
Assume that $V'_0$ is not a direct summand of $M$.

First we deal with the case that for each indecomposable direct summand $B$ of $V'_0$ the equality $\mu_B(M)=0$ holds. We note that not all direct summands of $V_0'$ can be projective (otherwise we have a contradiction to $F_i(M,V'_0)>0$). Since the $\kg$-minimal elements of $\mathscr{P}_i(\Qu)$ is projective (see Remark \ref{minimal}), there exists $\widetilde{V}_0\in \mathscr{S}_i(\Qu)$ such that $\widetilde{V}_0\triangleleft V'_0$ and $F_i(M,V'_0)\le F_i(M,\widetilde{V}_0)$. A contradiction.

In case $\mu_B(M)\ne 0$ for a direct summand $B$ of $V'_0$, let $\widetilde{B}$ be a direct summand of $V'_0$ such that $\mu_{\widetilde{B}}(M)= 0$. Note that such a $\widetilde{B}$ exists by the assumption that $V'_0$ is not a direct summand of $M$. Let $V'_0=\widetilde{V_0} \bigoplus \widetilde{B}$. Then $F_i(M,V'_0)\le F_i(M,\widetilde{V_0})$ but $\widetilde{V_0}\kg V'_0$, once more a contradiction.
\end{proof}

Hence the following is well-defined.
\begin{defi}\label{operatorm} Let $M$ be a $k\Qu$-module with the property that there exists an antichain $V\in \mathscr{S}_i(\Qu)$ such that $F_i(M,V)>0$.  We define $ \mi M:=X'.$ where $X'$ is obtained by the following recipe:
\begin{itemize}
\item For all $V\in \mathscr{S}_i(\Qu)$  compute the value
$$F_i(M,V)=\displaystyle\sum_{B\kg V} \mu_{B}(M)-\mu_{\tau B}(M).$$
\item Let $V'_0$ be the $\kg$--minimal antichain where the maximal value of $F_i(M,V)$ is reached.
\item Let $U'_0$ be the sum of all $\tau B$ such that $B\in\mathscr{P}_i(\Qu)$ and $B\ntrianglelefteq V'_0$ minimally.
\item Set $X'=M''\oplus U'_0$ where $M=M'\oplus V'_0.$
\end{itemize}
\end{defi}

\begin{prop}\label{homologicalkashiwara} Let $M\in k\Qu-\modd$ have the property that there exists an antichain $V\in \mathscr{S}_i(\Qu)$ such that $F_i(M,V)>0$. Then $$\tilde{e}_ib_{{[M]}}=b_{{[\mi M]}}.$$
\end{prop}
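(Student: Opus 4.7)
The plan is to exploit that $B^{\mathscr{H}}(\infty)$ is a crystal, so $\tilde{e}_i$ is uniquely determined as the partial inverse of $\tilde{f}_i$. The hypothesis that $F_i(M,V)>0$ for some antichain $V$ gives $a_i^{\Qu}(M)>0$, hence by (\ref{eq:functione}) we have $\varepsilon_i(b_{[M]})>0$, so $\tilde{e}_i b_{[M]}\neq 0$ and is the unique element satisfying $\tilde{f}_i(\tilde{e}_i b_{[M]})=b_{[M]}$. Combined with (\ref{eq:operatorf}), this reduces the proposition to verifying the module isomorphism $\pl_i(\mi M)\cong M$.

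For the setup, the lemma immediately preceding Definition~\ref{operatorm} guarantees that $V'_0$ is a direct summand of $M$, so I can write $M=M''\oplus V'_0$ and hence $\mi M=M''\oplus U'_0$ with $U'_0=\bigoplus_{B\in l(V'_0)}\tau B$. Running the recipe of Definition~\ref{ee} on $\mi M$, the only nontrivial task is to show that the $\kg$-maximal antichain $W$ at which $F_i(\mi M,\cdot)$ attains its maximum equals $V'_0$. Once this is granted, the remaining steps of the recipe produce $U_0=\bigoplus_{B\in l(V'_0)}\tau B=U'_0$, which is already a direct summand of $\mi M$ by construction, so $\pl_i(\mi M)=M''\oplus V'_0=M$.

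To identify $W$ with $V'_0$, I exploit the additivity of $F_i(\cdot,V)$ in the first argument to write $F_i(\mi M,V)-F_i(M,V)=F_i(U'_0,V)-F_i(V'_0,V)$, reducing the analysis to a computation involving only the small modules $V'_0$ and $U'_0$. At $V=V'_0$, the contributions of the direct summands of $V'_0$ and of the $\tau$-shifts of the elements of $l(V'_0)$ should telescope---using that every proper $\kg$-predecessor in $\mathscr{P}_i$ of any $B\in l(V'_0)$ lies $\kg V'_0$---to a correction of exactly $+1$, so that $F_i(\mi M,V'_0)=a_i^{\Qu}(M)+1$. For antichains $V$ strictly $\kg$-larger than $V'_0$ or incomparable to $V'_0$, I would bound $F_i(\mi M,V)\leq a_i^{\Qu}(M)+1$ by a similar bookkeeping; combined with the $\kg$-minimality of $V'_0$ as a maximizer of $F_i(M,\cdot)$, which excludes any strict $\kg$-predecessor of $V'_0$ from being a maximizer of $F_i(\mi M,\cdot)$, this pins down $V'_0$ itself as the $\kg$-maximal maximizer.

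The main obstacle is the combinatorial bookkeeping in this last step: tracking how the indecomposables $B\in\mathscr{P}_i$ with $B\kg V$ distribute between $V'_0$ and the $\tau$-shifts of $l(V'_0)$, and extracting from the two minimality conditions (of $V'_0$ as optimizer of $F_i(M,\cdot)$, and of the elements of $l(V'_0)$ inside $\mathscr{P}_i$) a precise estimate fixing the correction at $V=V'_0$ to be $+1$ and at other $V$ to be small enough to preserve $V'_0$ as the $\kg$-maximal optimizer. This argument is essentially dual to Reineke's proof of Proposition~\ref{constructionV} and should go through along the same lines.
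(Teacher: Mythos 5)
Your reduction is valid but runs in the opposite direction from the paper's: you propose to verify $\pl_i(\mi M)\cong M$, while the paper verifies $\mi(\pl_i M)\cong M$ for all $M$ and then uses that every vertex with $\varepsilon_i>0$ lies in the image of $\tilde f_i$ to conclude. That directional choice matters, because the paper's direction comes with a ready-made key input: Proposition~\ref{constructionV} and Theorem~\ref{reinekethm} give $a_i^{\Qu}(\pl_i M)=a_i^{\Qu}(M)+1$ outright, so once one computes $F_i(\pl_i M,V_0)=a_i^{\Qu}(M)+1$ it is immediate that $V_0$ is a maximizer of $F_i(\pl_i M,\cdot)$, and the only thing left is a $\kg$-minimality check ranging over $V\kg V_0$, which Reineke's Lemma~6.3 handles. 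You have no analogous a priori identity for $a_i^{\Qu}(\mi M)$; you would first have to prove $a_i^{\Qu}(\mi M)=a_i^{\Qu}(M)-1$ and that $V'_0$ is a maximizer of $F_i(\mi M,\cdot)$, and then, because $\pl_i$ selects the $\kg$-\emph{maximal} maximizer, you must additionally rule out maximizers $V$ strictly above or incomparable to $V'_0$. That is a strictly larger class of antichains than the paper ever needs to analyze, and your sketch leaves exactly this part to unspecified ``bookkeeping.''

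There is also a sign error in the one quantitative claim you do make: removing $V'_0$ and inserting $U'_0=\bigoplus_{B\in l(V'_0)}\tau B$ gives $F_i(\mi M,V'_0)-F_i(M,V'_0)=F_i(U'_0,V'_0)-F_i(V'_0,V'_0)$, and the Reineke-type telescoping you invoke yields this difference as $-1$, not $+1$; so $F_i(\mi M,V'_0)=a_i^{\Qu}(M)-1$, consistent with $\varepsilon_i$ dropping by one under $\tilde e_i$. Since the parts you defer -- establishing the value of the maximum of $F_i(\mi M,\cdot)$ and excluding maximizers not below $V'_0$ -- are precisely where the real content lies and are not merely ``essentially dual'' to Reineke's argument (the range of $V$ to be controlled is genuinely different), the proposal as written has a substantive gap.
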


\begin{proof} First we show that, since we know how the Kashiwara operator $\tilde{f}_i$ acts on $B^{\mathscr{H}}(\infty)$, the action of $\tilde{e}_i$ on $B^{\mathscr{H}}(\infty)$ is already determined by the equality
\begin{equation}\label{eq:actiondetermined}
\tilde{e}_i\tilde{f}_ib_{[M]}=b_{[M]}
\end{equation}
for all $M \in k\Qu-\modd$. For this, assume that Equation (\ref{eq:actiondetermined}) holds and let $N$ be a $k\Qu$-module such that there exists a $V\in \mathscr{S}_i(\Qu)$ with $F_i(N,V)>0$, i.e. $\varepsilon_i(b_{[N]})>0$ and $\tilde{e}_ib_{[N]}\in B^{\mathscr{H}}(\infty)$. Let $N'$ be a $k\Qu$-module such that
$$\tilde{f}_i\tilde{e}_ib_{[N]}=b_{[N']}.$$
 Applying $\tilde{e}_i$ yields
$$\tilde{e}_ib_{[N]}=\tilde{e}_ib_{[N']}.$$
Hence ${[N]}=[N']$.

Let $M \in k\Qu-\modd$ and let $\tilde{f}_ib_{[M]}=b_\text{[X]}$. Then $[X]=[\pl_i M]$, i.e. $X= M' \oplus V_0$ where $M = M' \oplus U_0$ and $U_0$, $V_0$ as in Definition \ref{ee}.

First we note that
$$F_i(X,V_0)=F_i(M,V_0)+F_i(V_0,V_0)-F_i(U_0,V_0).$$
It follows from the proof of \cite[Lemma 6.3]{Rei} and the considerations in loc. cit. p. 717 (since the graph $\Omega$ defined therein has no vertices in this case) that $F_i(V_0,V_0)-F_i(U_0,V_0)=1$, which yields
$$F_i(X,V_0)=F_i(M,V_0)+1=a_i^{Q}(M)+1.$$
Theorem \ref{reinekethm} together with (\ref{eq:degree}) then shows that the maximal value of $F_i(X,V)$ is reached at $V_0$. It remains to show, that $V_0$ is $\kg$-minimal with this property.

Let $V\in \mathscr{S}_i(\Qu)$ with $V\kg V_0$, then:
\begin{align*}
F_i(X,V_0)&=F_i(M,V_0)+1 \ge F_i(M,V)+1 \\
&= F_i(X,V)-F_i(V_0,V)+F_i(U_0,V) \\
& \ge F_i(X,V)-1,
\end{align*}
where the first inequality comes from the fact that the maximal value of $F_i(M,V)$ is reached at $V_0$ and the second inequality follows again from loc. cit. Lemma 6.3 and the considerations in loc. cit. p. 717.
\end{proof}

\subsection{SPECIAL QUIVERS}\label{specialclassification}

Let us examine the property special of a Dynkin quiver $\Qu$ more closely. In \cite[page 14]{Rei2}, a combinatorial description of special quivers is given. For that we need the following definition. A vertex $i\in I$ of $\Qu$ is called \emph{thick} if there exists an indecomposable $k\Qu$-representation $M=(V,x)$ such that $\dim V_i \ge 2$.

\begin{prop}[{\cite[Proposition 2.8]{Rei2}}]\label{combcond} Let $\Qu$ be a quiver. Then $\Qu$ is special if and only if no thick vertex is a source of $\Qu$.
\end{prop}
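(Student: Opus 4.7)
My plan is to exploit the identity
\[
\dim\Hom_{k\Qu}(X, S(i)) \;=\; \dim\Coker\!\Bigl(\bigoplus_{h:\In(h)=i} V_{\out(h)} \xrightarrow{x_h} V_i\Bigr),
\]
valid for every representation $X=(V,x)$ of $\Qu$; this is exactly the cokernel computation appearing in (\ref{eq:epsil}). Both directions of the biconditional will be reduced to this formula.

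The forward direction is easy and proceeds by contraposition. Suppose a thick vertex $i$ happens to be a source of $\Qu$, and pick an indecomposable $X$ with $\dim V_i \ge 2$ witnessing thickness. Because $i$ has no incoming arrows the index set in the cokernel above is empty, so the map in question has zero image and $\dim\Hom_{k\Qu}(X,S(i)) = \dim V_i \ge 2$, showing that $\Qu$ is not special.

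For the reverse direction, assume no thick vertex is a source, fix an indecomposable $X$ and a vertex $i$, and split on whether $i$ is a source. When $i$ is a source the hypothesis forces $i$ not to be thick, hence $\dim V_i \le 1$ for every indecomposable, and the formula at once gives $\dim\Hom_{k\Qu}(X,S(i)) \le 1$. When $i$ is not a source, I argue by contradiction: assuming $\dim\Hom_{k\Qu}(X,S(i)) \ge 2$, two linearly independent projections $\pi_1,\pi_2: X \to S(i)$ combine to a surjection $X \twoheadrightarrow S(i)^{\oplus 2}$ with kernel $K$, giving a short exact sequence $0 \to K \to X \to S(i)^{\oplus 2} \to 0$ whose class in $\Ext^1(S(i)^{\oplus 2}, K) \cong \Ext^1(S(i),K)^{\oplus 2}$ must have linearly independent components in $\Ext^1(S(i),K)$. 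Indeed, if the components were proportional, a change of basis on $S(i)^{\oplus 2}$ would peel off a direct summand isomorphic to $S(i)$ from $X$, contradicting indecomposability. Hence $\dim\Ext^1(S(i),K) \ge 2$, and the standard hereditary projective resolution of $S(i)$ rewrites this as
\[
\dim\Coker\!\Bigl(K_i \to \bigoplus_{h:\out(h)=i} K_{\In(h)}\Bigr) \;\ge\; 2.
\]

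The main obstacle is turning this last inequality into a contradiction, i.e.\ ruling out such a subrepresentation $K$ inside a Dynkin indecomposable $X$. My preferred route is to invoke Auslander--Reiten duality, using that $\tau^{-1} S(i)$ is nonzero precisely because $i$ is not a source, to rewrite the offending $\Hom$-space in terms of maps into (or out of) an AR-translate and then use Gabriel's classification of Dynkin indecomposables to force a direct summand of $X$ isomorphic to $S(i)$. A more pedestrian alternative is a type-by-type check in $A$, $D$, $E$: for each rigid indecomposable the arrow-map ranks are determined by the positive-root dimension vector, and at every non-source vertex those forced ranks cut the top down to dimension at most one, which completes the reverse direction.
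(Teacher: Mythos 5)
The paper does not supply a proof of this proposition; it simply cites Reineke's \cite[Proposition 2.8]{Rei2}, so I can only assess your argument on its own terms.

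Your forward direction is correct and complete: when $i$ is a source the cokernel formula collapses to $\dim V_i$, so a thick source immediately produces an indecomposable $X$ with $\dim\Hom_{k\Qu}(X,S(i))\ge 2$. The easy half of the reverse direction (the case where $i$ is a source, hence by hypothesis not thick) is likewise fine. The algebra you do in the non-source case up to the conclusion $\dim\Ext^1(S(i),K)\ge 2$ is also sound: if the two components of the extension class in $\Ext^1(S(i),K)^{\oplus 2}$ were linearly dependent, a base change on $S(i)^{\oplus 2}$ produces a zero component and hence a section $S(i)\hookrightarrow X$ splitting off a summand, contradicting indecomposability (and $X\not\cong S(i)$ since $\dim V_i\ge 2$).

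The gap is that you never close the non-source case. You explicitly flag ``the main obstacle is turning this last inequality into a contradiction'' and then only sketch two unrealized routes. The Auslander--Reiten route as stated is on the wrong foot: the duality you would want to apply to $\Ext^1(S(i),K)$ reads $\Ext^1(S(i),K)\cong D\overline{\Hom}(K,\tau S(i))$ and therefore requires $S(i)$ to be \emph{non-projective}, i.e.\ $i$ not a \emph{sink} --- which is a different condition from the $\tau^{-1}S(i)\ne 0$ (i.e.\ $i$ not a source) that you invoke. (The sink case actually needs its own one-line argument: $S(i)$ projective forces $\Ext^1(S(i),K)=0$, splitting the sequence and contradicting indecomposability; then only $i$ neither source nor sink remains, where some genuine input about Dynkin indecomposables is required.) Your ``pedestrian alternative'' of a type-by-type rank computation in $A$, $D$, $E$ could in principle work but is not carried out and is a substantial amount of case analysis, particularly in $E_6$, $E_7$, $E_8$ where the branch vertex carries large root coefficients. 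As written, the reverse direction is therefore incomplete.
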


\begin{defi} Let $\mathfrak{g}$ be a Lie algebra of simply-laced type and $i\in I$. A fundamental weight $\omega_i$ of $\mathfrak{g}$ is called \emph{minuscule} if
$$-\left<\alpha,\omega_i\right>\le 1$$
for all negative roots $\alpha$.
\end{defi}
Recall that we denote by $M(\alpha,k)$ a representative of the isomorphism class of indecomposable $k\Qu$-modules that correspond to the negative root $\alpha$ by Gabriel's Theorem. We get from Proposition \ref{combcond}.
\begin{cor} Let $\Qu$ be a Dynkin quiver and $\mathfrak{g}$ the Lie algebra associated to the Dynkin diagram of $\Qu$. Then $\Qu$ is special if and only if for each vertex $i\in I$, that is a source of $\Qu$, the fundamental weight $\omega_i$ is minuscule.
\end{cor}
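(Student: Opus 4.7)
The plan is to combine Proposition \ref{combcond} with a direct root-theoretic interpretation of thickness coming from Gabriel's theorem, and then observe that this interpretation is exactly the failure of the minuscule condition.

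First I would unfold the minuscule condition in purely combinatorial terms. Using the pairing normalization in which $\langle\alpha_j,\omega_i\rangle=\delta_{ij}$ for simple roots (valid in the simply-laced case), any negative root $\alpha$ can be written as $\alpha=-\sum_j n_j\alpha_j$ with $n_j\in\mathbb{Z}_{\ge 0}$, whence
\[
-\langle\alpha,\omega_i\rangle=n_i.
\]
Thus $\omega_i$ is minuscule if and only if the coefficient of $\alpha_i$ in every positive root is at most $1$.

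Next I would translate thickness into the same language via Gabriel's theorem. The theorem gives a bijection between isomorphism classes of indecomposable $k\Qu$-representations $M=(V,x)$ and positive roots $\beta=\sum_j(\dim V_j)\alpha_j$. Hence the vertex $i$ is thick precisely when there exists a positive root whose $\alpha_i$-coefficient is at least $2$, i.e. when $\omega_i$ is \emph{not} minuscule. Combining these two equivalences gives: a vertex $i$ is thick if and only if $\omega_i$ is not minuscule, independently of any orientation of $\Qu$.

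Finally I would invoke Proposition \ref{combcond}: $\Qu$ is special if and only if no source of $\Qu$ is thick, which by the equivalence just established is the same as saying that $\omega_i$ is minuscule for every source $i$ of $\Qu$. This yields the corollary. There is no real obstacle here; the only point to be careful about is the pairing convention identifying $-\langle\alpha,\omega_i\rangle$ with the coefficient of $\alpha_i$ in $-\alpha$, which is where the simply-laced assumption enters implicitly.
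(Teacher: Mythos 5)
Your proof is correct and follows essentially the same route as the paper: both arguments reduce the minuscule condition to the statement that the $\alpha_i$-coefficient of every positive root is at most $1$ (equivalently, $\dim M(\alpha,k)_i\le 1$ for all $\alpha\in R^-$ by Gabriel's theorem), identify this with $i$ not being a thick vertex, and then invoke Proposition \ref{combcond} to restrict attention to sources. The paper additionally records the equality $\dim M(\alpha,k)_i=\dim\Hom_{k\Qu}(M(\alpha,k),S(i))$ for $i$ a source, tying the chain back to the original definition of ``special'', but this is not logically needed once Proposition \ref{combcond} is in hand, and your version makes the orientation-independence of thickness slightly more explicit.
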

\begin{proof} Note that, if the vertex $i$ is a source of $\Qu$, we have for $\alpha$ a negative root and $\omega_i$ fundamental weight of $\mathfrak{g}$
$$-\left<\alpha,\omega_i\right>=\dim M(\alpha,k)_i \overset{i \text{ source}}{=}\dim\Hom_{k\Qu}(M(\alpha,k),S(i)),$$
where $\dim M(\alpha,k)_i$ denotes the dimension of the vector space assigned to vertex $i$ in the $\Qu$-representation $M(\alpha,k)$. Thus no thick vertex is a source of $\Qu$ if and only if letting $i$ run over all sources, we have $-\left<\alpha,\omega_i\right>\le 1$ for all $\alpha\in R^-$.
 \end{proof}
Thus there is no special quiver of type $E_8$. To get a special quiver $\Qu$ of one of the other simply-laced types , we are only allowed to choose vertices as sources which are framed in the following diagrams (following the classification of minuscule weights given in \cite[Chapter VIII, Proposition 7]{Bu}):

\begin{equation*}
\xymatrix{
A_n: & \fbox{\text{$\circ$}} \ar@{-}[r] &\fbox{\text{$\circ$}}\ar@{-}[r]&\fbox{\text{$\circ$}}\ar@{-}[r] & \cdots \ar@{-}[r]&\fbox{\text{$\circ$}}\\
  & & & & & & \fbox{\text{$\circ$}} \\
D_n: &\fbox{\text{$\circ$}}\ar@{-}[r] &\circ \ar@{-}[r]&\circ \ar@{-}[r]&\cdots \ar@{-}[r]& \circ\ar@{-}[ru]\ar@{-}[rd] &\\
 & & & & & & \fbox{\text{$\circ$}}
}
\end{equation*}
\begin{equation*}
\xymatrix{
& & & \circ \\
E_6 & \fbox{\text{$\circ$}} \ar@{-}[r] & \circ \ar@{-}[r]&\circ \ar@{-}[r]\ar@{-}[u] & \circ\ar@{-}[r]&\fbox{\text{$\circ$}} \\
& & & \circ \\
E_7 & \circ \ar@{-}[r] & \circ \ar@{-}[r]&\circ \ar@{-}[r]\ar@{-}[u] & \circ \ar@{-}[r]&\circ \ar@{-}[r]&\fbox{\text{$\circ$}}.
}
\end{equation*}

\section{COMPARISON}\label{comparebinfty}

In this section we give an explicit crystal isomorphism between the two crystal structures $B^{\mathscr{H}}(\infty)$ and $B^g(\infty)$. While the construction of $B^{\mathscr{H}}(\infty)$  works for isomorphism classes of $k\Qu$-modules over an arbitrary field $k$, we fix $k=\mathbb{C}$ in this section to relate it to the quiver representations appearing in the geometric construction.

We start by recalling Lusztig's description of the irreducible components of $\Lambda_V$, i.e. the elements of the crystal $B^g(\infty)$.

\begin{prop}[{\cite[Proposition 14.2.(b)]{Lu}}]\label{conormal} For $\mathfrak{g}$ semi-simple of type ADE, the irreducible components of $\Lambda_V$ are the closures of the conormal bundles of the $G_v$-orbits in $\Repp_V(\Qu).$
\end{prop}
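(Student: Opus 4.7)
The plan is to identify $\Lambda_V$ with the zero fibre of the moment map for the natural $G_v$-action on the cotangent bundle of $\Repp_V(\Qu)$, and then to combine a standard symplectic observation with Gabriel's finiteness theorem.

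First, I would observe that the ambient space $\bigoplus_{h\in H}\Hom(V_{\out(h)},V_{\In(h)})$ of $\Lambda_V$ is canonically the cotangent bundle $T^{*}\Repp_V(\Qu)$: the summands indexed by $\Qu_1$ give $\Repp_V(\Qu)$ itself and those indexed by $\overline{\Qu_1}$ give its dual via the pairing $\sum_{h\in\Qu_1}\tr(y_{\bar h}x_h)$. Under this identification, the $G_v$-action on $\Repp_V(\Qu)$ lifts to a Hamiltonian action on $T^{*}\Repp_V(\Qu)$ whose moment map is exactly
\[
\mu(x,y)_i=\sum_{h\in H,\ \In(h)=i}\epsilon(h)\,x_h x_{\bar h},
\]
so that $\Lambda_V=\mu^{-1}(0)$. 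This is where the signs $\epsilon(h)$ and the defining relation of $\Pi(\Qu)$ appear naturally.

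Second, I would use the standard fact that, for a Hamiltonian action of a group on a cotangent bundle lifting an action on the base, a point $(x,y)$ lies in $\mu^{-1}(0)$ if and only if $y\in E_V^{*}$ annihilates the tangent space $T_x(G_v\cdot x)$, i.e. $(x,y)$ belongs to the conormal bundle $T^{*}_{\mathcal{O}_x}\Repp_V(\Qu)$ of the orbit $\mathcal{O}_x$ of $x$. Hence set-theoretically
\[
\Lambda_V=\bigcup_{\mathcal{O}} T^{*}_{\mathcal{O}}\Repp_V(\Qu),
\]
with $\mathcal{O}$ running over $G_v$-orbits in $\Repp_V(\Qu)$. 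Since $\Qu$ is Dynkin of type ADE, Gabriel's theorem ensures that this is a \emph{finite} union.

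Third, each conormal bundle $T^{*}_{\mathcal{O}}\Repp_V(\Qu)$ is a vector bundle of rank $\dim\Repp_V(\Qu)-\dim\mathcal{O}$ over the smooth irreducible orbit $\mathcal{O}$, so it is itself irreducible of dimension exactly $\dim\Repp_V(\Qu)=\tfrac{1}{2}\dim E_v$. Thus every closure $\overline{T^{*}_{\mathcal{O}}\Repp_V(\Qu)}$ is an irreducible subvariety of $\Lambda_V$ of dimension $\tfrac{1}{2}\dim E_v$, which agrees with the pure dimension of $\Lambda_V$ from \cite[Theorem 12.3 (b)]{Lu}. To conclude, I would rule out strict containments: if $\overline{T^{*}_{\mathcal{O}_1}\Repp_V(\Qu)}\subseteq \overline{T^{*}_{\mathcal{O}_2}\Repp_V(\Qu)}$ then, both being irreducible of the same (maximal) dimension, they coincide; but projecting to $\Repp_V(\Qu)$ yields the orbit closures $\overline{\mathcal{O}_1}$ and $\overline{\mathcal{O}_2}$, forcing $\mathcal{O}_1=\mathcal{O}_2$. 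Together with finiteness, this identifies the $\overline{T^{*}_{\mathcal{O}}\Repp_V(\Qu)}$ as exactly the irreducible components of $\Lambda_V$.

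The main obstacle is the bookkeeping in Step 1: one must check that the preprojective relation coincides, sign by sign, with the moment-map equation for a judicious choice of symplectic pairing on $T^{*}\Repp_V(\Qu)$, and this is where the role of the function $\epsilon$ becomes essential. Everything afterwards is a clean consequence of Gabriel's theorem (to get finiteness of orbits) and the purity of $\Lambda_V$ cited from \cite{Lu}; without either ingredient the passage from the set-theoretic conormal decomposition to the identification of irreducible components would require substantially more work.
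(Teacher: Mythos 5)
Your proof is correct and is essentially the standard moment-map argument for this fact; the paper itself does not prove Proposition~\ref{conormal} but cites it directly from \cite[Proposition 14.2(b)]{Lu}, where the argument proceeds along the same lines (identify $\Lambda_V$ as the zero fibre of the moment map on $T^{*}\Repp_V(\Qu)$, decompose it into conormal bundles, and use Gabriel's theorem for finiteness of orbits together with the purity of $\Lambda_V$). One small simplification you could make: once you know each $\overline{T^{*}_{\mathcal{O}}\Repp_V(\Qu)}$ is irreducible of the maximal dimension $\tfrac{1}{2}\dim E_V$, a strict containment between two of them is already impossible on dimension grounds alone, so the projection argument is only needed to show that \emph{distinct} orbits yield \emph{distinct} components.
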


For $M\in\Repp_V(\Qu)$, we denote the conormal bundle of the orbit $G_v\cdot M$ in $\Repp_V(\Qu)$ by ${\mathcal{C}_{{[M]}}}$. Hence, by Proposition \ref{conormal}, every irreducible component of $\Lambda_V$ is given by the closure $\overline{\mathcal{C}_{{[M]}}}$ of ${\mathcal{C}_{{[M]}}}$ for some $M$.

Since the $G_v$-orbits in $\Repp_V(\Qu)$ coincide with the isomorphism classes of representations of the path algebra $\mathbb{C}\Qu$, we have a one-to-one correspondence between the vertices of $B^g(\infty)$ and the isomorphism classes of $\mathbb{C}\Qu$-modules. Hence the map
\begin{align}\label{eq:iso}
\mathscr{F}:B^{\mathscr{H}}(\infty) & \rightarrow B^g(\infty) \\
b_{[M]} & \mapsto \overline{\mathcal{C}_{[M]}} \notag
\end{align}
is well-defined and bijective. In this section we prove that $\mathscr{F}$ is a morphism of crystals and thus provides the desired isomorphism.

Explicitly, for $M\in \Repp_V(\Qu)$, we have by \cite[Lemma 9.3]{Lu2}
$$\mathcal{C}_{[M]}=\pr^{-1}(G_v\cdot M)$$
where
\begin{align}\label{eq:pi}
\pr:\Lambda_V & \rightarrow \Repp_V(\Qu)
\end{align}
is the restriction map given by forgetting the arrows $h\notin\Qu$.
Hence $\mathscr{F}\left(b_{[M]}\right)=\overline{\pr^{-1}(G_v\cdot M)}$.

Recall from the definition of $\varepsilon_i$ given in (\ref{eq:epsilon}) and Remark \ref{dense} that, for $i\in I$, the actions of the Kashiwara operators and the value of the function $\varepsilon_i$ on $\overline{\mathcal{C}_{[M]}}$ are already determined by their values on a dense subset of the conormal bundle $\mathcal{C}_{[M]}$. The next remark shows that it suffices to study one fiber of $\mathcal{C}_{[M]}$.

\begin{rem}\label{fiber}
For $M\in \Repp_V(\Qu)$ let $\mathscr{D}\subset \pr^{-1}(M)$ be a dense subset. Then $G_v\cdot \mathscr{D}$ is a dense subset of $\overline{\mathcal{C}_{[M]}}$: Indeed by the $G_v$-equivariance of $\pr$ and the fact that each $g\in G_v$ acts as a homeomorphism, we have
$$\overline{G_v\cdot \mathscr{D}}=\overline{\displaystyle\bigcup_{g\in G_v}g\cdot \mathscr{D}}\supseteq \displaystyle\bigcup_{g\in G_v}\overline{g\cdot \mathscr{D}} \supseteq \displaystyle\bigcup_{g\in G_v}\pr^{-1}(g\cdot M)=\mathcal{C}_{[M]}.$$
\end{rem}
We proceed by recalling a description of the fiber $\pr^{-1}(M)$. Let therefore $\mathcal{G}_1,\mathcal{G}_2:\mathbb{C}\Qu-\modd\rightarrow \mathbb{C}\Qu-\modd$ be two functors. We denote by $\mathbb{C}\Qu-\modd(\mathcal{G}_1,\mathcal{G}_2)$ the following category: its objects are pairs $(M,\phi)$, where $M\in \mathbb{C}\Qu-\modd$ and $\phi\in \Hom_{\mathbb{C}\Qu}(\mathcal{G}_1(M),\mathcal{G}_2(M))$. Given two objects $(M,\phi)$, $(M',\phi')$, a morphism $(M,\phi)\rightarrow(M',\phi')$ in $\mathbb{C}\Qu-\modd(\mathcal{G}_1,\mathcal{G}_2)$ is a morphism $f:M\rightarrow M'$ in $\mathbb{C}\Qu-\modd$ such that $\phi'\circ \mathcal{G}_1(f)=\mathcal{G}_2(f) \circ \phi$.

\begin{thm}[{\cite[Theorem B., Theorem C., Proposition 3.]{Ri3}}]\label{preprojmodule}The categories $\Pi(\Qu)-\modd$,
$\mathbb{C}\Qu-\modd(\tau^{-1},\id)$ and $\mathbb{C}\Qu-\modd(\id,\tau)$ are isomorphic.
\newline
For $M\in \Repp_v(\Qu)$ we furthermore have a vector space isomorphism $\pr^{-1}(M)\cong\Hom_{\mathbb{C}\Qu}(\tau^{-1}M,M)$.
\end{thm}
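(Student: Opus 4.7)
The plan is to unpack the data carefully: a $\Pi(\Qu)$-module is a tuple $(V,(x_h)_{h\in H})$ with $x_h \in \Hom(V_{\out(h)}, V_{\In(h)})$ satisfying the preprojective relation. Since $H = \Qu_1 \sqcup \overline{\Qu_1}$, this splits as a pair: a $\mathbb{C}\Qu$-module $M=(V,(x_h)_{h\in \Qu_1})$ together with a family of backward maps $(y_{\bar h})_{h\in \Qu_1}$ with $y_{\bar h}: V_{\In(h)} \to V_{\out(h)}$, constrained by $\sum_{h:\In(h)=i}\epsilon(h)x_h y_{\bar h} = 0$ at every vertex $i$. Following Ringel, the goal is to reorganize this family of backward maps into a single $\mathbb{C}\Qu$-linear morphism $\phi \in \Hom_{\mathbb{C}\Qu}(\tau^{-1}M, M)$, thereby establishing the categorical equivalence with $\mathbb{C}\Qu\text{-mod}(\tau^{-1},\id)$.

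To construct the functor in one direction, I would fix the minimal projective presentation of $\tau^{-1}M$ coming from the standard description for hereditary algebras: dualizing the minimal injective resolution of $M$ (which is a two-term complex whose components split as direct sums of indecomposable injectives $I(j)$) yields a presentation of $\tau^{-1}M$ whose terms are indexed by vertices and whose connecting map is assembled from the arrows of $\Qu$ weighted by the $\epsilon(h)$. A $\mathbb{C}\Qu$-linear map $\phi: \tau^{-1}M \to M$ is then exactly a family of linear maps indexed by vertices that descends through this presentation, and the descent condition on $\phi$ translates term by term into the preprojective relation for the associated backward maps $y_{\bar h}$. Morphisms between $\Pi(\Qu)$-modules translate likewise into morphisms in $\mathbb{C}\Qu\text{-mod}(\tau^{-1},\id)$, so the assignment is fully faithful. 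The parallel equivalence with $\mathbb{C}\Qu\text{-mod}(\id,\tau)$ follows by the same argument using a projective resolution of $M$ in place of an injective coresolution, transferring $\phi: \tau^{-1}M \to M$ across the Auslander--Reiten duality $\Hom(\tau^{-1}M,M) \cong \Hom(M,\tau M)$ (or by a direct dual construction).

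The vector space statement for $\pr^{-1}(M)$ then falls out immediately. Indeed, by definition, $\pr^{-1}(M)$ consists of all ways to extend the fixed $\mathbb{C}\Qu$-module $M$ to a $\Pi(\Qu)$-module, namely the tuples of backward maps $(y_{\bar h})$ satisfying the preprojective relation; the categorical isomorphism above restricts, over the fiber where $M$ is held fixed, to a bijection with $\Hom_{\mathbb{C}\Qu}(\tau^{-1}M, M)$. Since the preprojective relation is linear in the $y_{\bar h}$ and the correspondence is built from linear identifications at each vertex, this bijection is an isomorphism of vector spaces.

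The main obstacle is the verification that under the chosen projective presentation of $\tau^{-1}M$ the condition of $\mathbb{C}\Qu$-linearity of $\phi$ matches the preprojective relation \emph{on the nose}, signs included. This requires an explicit formula for $\tau^{-1}$ at the level of quiver representations and careful bookkeeping of the signs $\epsilon(h)$ appearing both in the defining relation of $\Pi(\Qu)$ and in the differential of the canonical (co)resolution; this is precisely the content of the results quoted from \cite{Ri3}, to which we appeal.
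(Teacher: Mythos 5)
This statement is quoted directly from Ringel's \emph{The preprojective algebra of a quiver}; the paper gives no proof of its own, so there is nothing internal to compare against, only Ringel's argument. Your outline is a reasonable sketch, but it takes a more hands-on route than Ringel does. Ringel first proves the algebra isomorphism $\Pi(\Qu)\cong T_{k\Qu}\bigl(\Ext^1_{k\Qu}(D(k\Qu),k\Qu)\bigr)$ (Theorem B), then invokes the general fact that modules over a tensor algebra $T_\Lambda(\mathbf{B})$ are pairs $(M,\,\mathbf{B}\otimes_\Lambda M\to M)$, and finally identifies $\mathbf{B}\otimes_\Lambda M\cong\tau^{-1}M$ for hereditary $\Lambda$. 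Your proposal instead works directly from the projective presentation $\nu^{-1}I^0\to\nu^{-1}I^1\to\tau^{-1}M\to 0$ obtained from the minimal injective copresentation of $M$, and matches the descent condition to the preprojective relations. This is the computational heart of the identification $\mathbf{B}\otimes_\Lambda M\cong\tau^{-1}M$ anyway, so the two strategies meet in the middle; the tensor-algebra route just packages the Morita-type bookkeeping once, at the level of algebras, rather than per module.

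Two points deserve care. First, the correspondence between the tuples $(y_{\bar h})_{h\in\Qu_1}$ (which are indexed by the arrows of $\Qu$) and maps out of $P_0=\nu^{-1}I^0$ (whose summands are indexed by the socle of $M$, not by arrows) is not literally ``a family of linear maps indexed by vertices''; the two unconstrained parameter spaces have different dimensions, and it is only \emph{after} imposing the preprojective relations on one side and the descent condition on the other that they become linearly isomorphic. Some explicit diagram-chasing is needed to see this, which is exactly where the signs $\epsilon(h)$ enter. Second, the isomorphism $\Hom_{\mathbb{C}\Qu}(\tau^{-1}M,M)\cong\Hom_{\mathbb{C}\Qu}(M,\tau M)$ you invoke is not Auslander--Reiten duality (which relates $\Ext^1$ to a \emph{stable} Hom); it is the tensor-hom adjunction for the bimodule $\mathbf{B}=\Ext^1(D(k\Qu),k\Qu)$, valid on ordinary Hom-spaces precisely because $k\Qu$ is hereditary so that $\tau^{-1}\cong\mathbf{B}\otimes_{k\Qu}-$ and $\tau\cong\Hom_{k\Qu}(\mathbf{B},-)$. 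With these clarifications the sketch is sound, though of course the substantive verification you flag at the end is precisely what is being delegated to \cite{Ri3}.
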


\begin{rem} Via the Auslander-Reiten duality we have an isomorphism $\pr^{-1}(M)\cong \D\Ext^{1}(M,M)$, where $\D\Ext^{1}(M,M)$ is the vector space dual of the space of self-extensions of $M$.
\end{rem}

Recall from Definition \ref{special} that a Dynkin quiver is called special if $\dim\Hom_{\mathbb{C}\Qu}(X,S(i))\le 1$ for all indecomposable $\mathbb{C}\Qu$-modules $X$ and all $i\in I$.

\begin{assumption}\label{special2} {\bf From now on we assume that $\Qu$ is special. }
\end{assumption}

The proof proceeds in two step. In the first step we develop a combinatorial method and use it to show that the bijection $\mathscr{F}$ preserves, for a fixed $i\in I$, the function $\varepsilon_i$ on the crystal $B^{\mathscr{H}}(\infty)$ and $B^g(\infty)$, respectively. In the second step we use the combinatorics developed in step one to define a dense subset of $\pr^{-1}(M)$ which is then used to proof that $\mathscr{F}$ preserves the actions of the Kashiwara operators.

\subsection{EQUIVARIANCE WITH RESPECT TO \texorpdfstring{$\varepsilon_i$}{EPSILON}}
Recall from Definition \ref{definitionai} that $a_i^{\Qu}(M)=\max_{V\in \mathscr{S}_i(\Qu)}F_i(M,V)$. The main task of this step is the proof of the following.

\begin{prop}\label{3} For $M\in \Repp_v(\Qu)$ we have $a_i^{\Qu}(M)=\varepsilon_i(\overline{\mathcal{C}_{[M]}}).$
\end{prop}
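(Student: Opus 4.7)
The plan is to reinterpret both sides in terms of the fiber $\pr^{-1}(M)$, which by Theorem \ref{preprojmodule} is naturally a vector space isomorphic to $\Hom_{\mathbb{C}\Qu}(\tau^{-1}M,M)$. A point $(M,\phi)$ in the fiber corresponds to a $\Pi(\Qu)$-module whose reverse arrows at vertex $i$ assemble into $\phi_i\colon (\tau^{-1}M)_i\to M_i$, so formula (\ref{eq:epsil}) becomes
$$\varepsilon_i(M,\phi)=\dim\mathrm{hd}_i(M)-\rank(\bar\phi_i),$$
where $\bar\phi_i$ is $\phi_i$ composed with the head projection $M_i\twoheadrightarrow\mathrm{hd}_i(M)$. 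Since $\pr^{-1}(M)$ is irreducible, $\varepsilon_i$ is upper semicontinuous and $G_v$-invariant, and $\mathcal{C}_{[M]}=G_v\cdot\pr^{-1}(M)$ is dense in $\overline{\mathcal{C}_{[M]}}$ (Remark \ref{fiber}), we obtain
$$\varepsilon_i(\overline{\mathcal{C}_{[M]}})=\dim\mathrm{hd}_i(M)-\max_\phi\rank(\bar\phi_i).$$
The claim therefore reduces to the combinatorial identity $\dim\mathrm{hd}_i(M)-\max_\phi\rank(\bar\phi_i)=\max_{V\in\mathscr{S}_i(\Qu)}F_i(M,V)$.

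To verify this identity I would decompose $M=\bigoplus_B B^{\mu_B(M)}$ into indecomposable summands and accordingly decompose $\phi$ into blocks $\phi_{B,C}\in\Hom(\tau^{-1}C,B)$. By Assumption \ref{special2} the head projection $B\twoheadrightarrow\mathrm{hd}_i(B)$ has one-dimensional target for $B\in\mathscr{P}_i(\Qu)$ and zero target otherwise, so the image of any block $\phi_{B,C}$ in $\mathrm{hd}_i(B)$ is at most one-dimensional, and Proposition \ref{partialorder} forces both $B,C\in\mathscr{P}_i(\Qu)$ with $B\kg C$ for this image to be non-zero. The crucial observation is that for each summand $\tau B$ of $M$ with $B\in\mathscr{P}_i(\Qu)$, the diagonal block $\Hom(\tau^{-1}(\tau B),B)=\End(B)$ is generated by the identity, whose image in $\mathrm{hd}_i(B)$ is surjective; generically this ``covers'' exactly one of the $\mu_B(M)$ copies of $\mathrm{hd}_i(B)$ inside $\mathrm{hd}_i(M)$. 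For each antichain $V\in\mathscr{S}_i(\Qu)$, counting covered versus uncovered head-dimensions coming from summands $B\kg V$ should reproduce the expression $F_i(M,V)=\sum_{B\kg V}(\mu_B(M)-\mu_{\tau B}(M))$ as a lower bound on the codimension of $\im\bar\phi_i$, giving
$$\rank(\bar\phi_i)\le\dim\mathrm{hd}_i(M)-F_i(M,V)\quad\text{for every }V\text{ and every }\phi.$$

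The main obstacle is the matching lower bound, i.e.\ exhibiting a single $\phi$ whose $\bar\phi_i$ attains rank $\dim\mathrm{hd}_i(M)-a_i^{\Qu}(M)$. Using the $\kg$-maximal antichain $V_0$ from Proposition \ref{constructionV}, I would construct such a $\phi$ block by block, choosing generators of the relevant one-dimensional $\Hom$-spaces in a $\kg$-compatible way and verifying by a direct rank computation that the covers arrange exactly as predicted by $F_i(M,V_0)$. This construction should closely parallel, on the $\Hom$-side, Reineke's degree computation $\deg c_i^{\Qu}(M,X)=F_i(M,V)$ on the Hall-algebra side, transported via the Auslander--Reiten duality $\Hom(\tau^{-1}M,M)\cong D\Ext^1(M,M)$ mentioned in the remark after Theorem \ref{preprojmodule}. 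I expect this block-matrix analysis, organized by the poset structure on $\mathscr{P}_i(\Qu)$, to be the combinatorial heart of the argument.
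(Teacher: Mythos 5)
Your first two reductions match the paper's. Translating the left side through Theorem \ref{preprojmodule} to $\varepsilon_i(\overline{\mathcal{C}_{[M]}})=\min_{\phi}\ell_i(\Coker\phi)$ over $\phi\in\Hom_{\mathbb{C}\Qu}(\tau^{-1}M,M)$, and noting that specialness reduces head-dimensions to counting summands in $\mathscr{P}_i(\Qu)$, is exactly the paper's first step. Your inequality $\rank(\bar\phi_i)\le\dim\mathrm{hd}_i(M)-F_i(M,V)$ is equivalent to Lemma \ref{Fkleiner} ($F_i(M,V)\le\varepsilon_{i,\phi}$), and the truncation-along-$V$ argument you sketch is indeed how the paper proves it.

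The genuine gap is the matching direction, and your sketch there would not go through as written. The ``crucial observation'' about the diagonal block $\Hom(\tau^{-1}(\tau B),B)=\End(B)$ only makes sense when both $\tau B$ \emph{and} $B$ are summands of $M$; if $\tau B$ is a summand but $B$ is not, there is no such block, and one must route the source $\tau^{-1}(\tau B)=B$ forward to some $B'\rhd B$ in $\mathscr{P}_i(\Qu)$ that \emph{is} a summand. Deciding which target to use, ensuring different sources hit different targets, and certifying that the result is optimal is precisely the combinatorial heart the paper has to work for, and your proposal defers it (``I would construct such a $\phi$ block by block\ldots{} I expect this block-matrix analysis\ldots{} to be the combinatorial heart''). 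Moreover, the strategy of starting from the $\kg$-maximal $V_0$ of Proposition \ref{constructionV} and building $\phi$ around it does not obviously terminate in a well-defined construction: one does not know in advance how to fill the blocks so that the cokernel matches $V_0$. The paper inverts this order: it introduces the auxiliary graph $\mathscr{P}_M^{\infty}$ and the category $\mathscr{A}$ so that a $\phi$ becomes a partial injection of colored vertices, defines $\preceq$-minimality, picks an arbitrary $\preceq$-minimal $\phi$, and then \emph{produces} the antichain $V^{\phi}$ from $\phi$ via the closure operator $\HH_{\phi}(W\setminus\phi(R))$. The non-trivial point is showing $v_{\infty}\notin V^{\phi}$ (Proposition \ref{f=e}), which uses $\preceq$-minimality in an essential way and has no counterpart in your sketch. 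Without some analog of that ``the closure does not leak to $v_\infty$'' argument, the rank lower bound remains unproved.
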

First we translate $\varepsilon_i(\overline{\mathcal{C}_{[M]}})$ into a homological notion by which the function can be handled more easily in our setup. Note therefore the following equality for $\widetilde{M}\in \Lambda(v)$

\begin{equation}\label{eq:dimhomdynk}
\varepsilon_i(\widetilde{M})=\dim\Hom_{\Pi(\Qu)}(\widetilde{M},S(i)).
\end{equation}
For $M\in \mathbb{C}\Qu-\modd$ and $\phi \in \Hom_{\mathbb{C}\Qu}(\tau^{-1}M,M)$, we define
\begin{align*}
\ell_i(M)&:=\dim\Hom_{\mathbb{C}\Qu}(M,S(i)),\\
\varepsilon_{i,\phi}&:=\dim\{f\in \Hom_{\mathbb{C}\Qu}(M,S(i)) \mid f \circ \phi =0\}=\ell_i(\Coker \phi).
\end{align*}
Since $\Qu$ is special by Assumption \ref{special2}, we have \begin{equation}\label{eq:dimhom}
\ell_i(M)=\displaystyle\sum_{j\in J}\dim\Hom_{\mathbb{C}\Qu}(M_j,S(i))=\sum_{B\in \mathscr{P}_i(\Qu)}\mu_B(M)
\end{equation}
where $M=\bigoplus_{j\in J}M_j$ is the decomposition of $M$ into indecomposable direct summands. Note that for $\widetilde{M}\in \Lambda(v)$ and $g\in G_v$ we have $\varepsilon_i(\widetilde{M})=\varepsilon_i(g\cdot\widetilde{M})$. Thus from (\ref{eq:dimhomdynk}) we get with Remark \ref{fiber} and the fact that $\Ext^1(S(i),S(i))=0$ the following.

\begin{cor} We have the equality
$$\varepsilon_i(\overline{\mathcal{C}_{[M]}})=\min\{\ell_i(\Coker \phi) \mid \phi \in \Hom_{\mathbb{C}\Qu}(\tau^{-1}M,M) \}.$$
\end{cor}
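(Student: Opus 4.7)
The plan is to compute $\varepsilon_i$ on $\overline{\mathcal{C}_{[M]}}$ by combining three ingredients: the identification $\varepsilon_i(\widetilde{M})=\dim\Hom_{\Pi(\Qu)}(\widetilde{M},S(i))$ from (\ref{eq:dimhomdynk}), the density statement of Remark \ref{fiber}, and the description of $\pr^{-1}(M)$ via pairs $(M,\phi)$ supplied by Theorem \ref{preprojmodule}.

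First I would observe that $\varepsilon_i$ is $G_v$-invariant, since Hom-spaces depend only on isomorphism classes. By upper semicontinuity and irreducibility of $\overline{\mathcal{C}_{[M]}}$, the minimal value $c:=\varepsilon_i(\overline{\mathcal{C}_{[M]}})$ is attained on an open dense subset, which must therefore intersect $\mathcal{C}_{[M]}=G_v\cdot \pr^{-1}(M)$. Translating back by a group element and using $G_v$-invariance, one obtains a point of the fiber $\pr^{-1}(M)$ which also realizes $c$. Since the reverse inequality $c\le \varepsilon_i(\widetilde{M})$ for all $\widetilde{M}\in\pr^{-1}(M)$ is automatic, this yields
$$\varepsilon_i(\overline{\mathcal{C}_{[M]}})=\min_{\widetilde{M}\in\pr^{-1}(M)}\dim\Hom_{\Pi(\Qu)}(\widetilde{M},S(i)).$$

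Next, Theorem \ref{preprojmodule} identifies each $\widetilde{M}\in\pr^{-1}(M)$ with a pair $(M,\phi)$ for some $\phi\in\Hom_{\mathbb{C}\Qu}(\tau^{-1}M,M)$. The simple $\Pi(\Qu)$-module $S(i)$ corresponds to $(S(i),0)$, since $\Ext^{1}_{\mathbb{C}\Qu}(S(i),S(i))=0$ forces its structure map $\tau^{-1}S(i)\to S(i)$ to vanish. Consequently, a $\Pi(\Qu)$-morphism $(M,\phi)\to(S(i),0)$ is precisely a $\mathbb{C}\Qu$-morphism $f\colon M\to S(i)$ satisfying $f\circ\phi=0$. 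Applying $\Hom_{\mathbb{C}\Qu}(-,S(i))$ to the right-exact sequence $\tau^{-1}M\xrightarrow{\phi}M\to\Coker\phi\to 0$ identifies this kernel with $\Hom_{\mathbb{C}\Qu}(\Coker\phi,S(i))$, whose dimension is $\ell_i(\Coker\phi)$ by definition. Substituting back yields the claimed equality.

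The only delicate step is the density transfer; its validity rests on irreducibility of $\overline{\mathcal{C}_{[M]}}$, which guarantees that the open locus where $\varepsilon_i$ takes its minimum value cannot be disjoint from the dense $G_v$-saturation of the fiber $\pr^{-1}(M)$. Everything else is a direct unravelling of the preprojective algebra module structure via Theorem \ref{preprojmodule}.
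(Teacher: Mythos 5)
Your proof is correct and takes essentially the same approach the paper indicates: it combines the identity $\varepsilon_i(\widetilde{M})=\dim\Hom_{\Pi(\Qu)}(\widetilde{M},S(i))$ with $G_v$-invariance of $\varepsilon_i$, the density transfer of Remark \ref{fiber}, and Theorem \ref{preprojmodule}, and uses $\Ext^1(S(i),S(i))=0$ to identify $S(i)$ with the pair $(S(i),0)$ so that $\Pi(\Qu)$-homomorphisms to $S(i)$ become $\mathbb{C}\Qu$-maps vanishing after $\phi$, i.e.\ elements of $\Hom_{\mathbb{C}\Qu}(\Coker\phi,S(i))$. The paper leaves the corollary as an unproved consequence of exactly these ingredients, and your write-up fills in the details faithfully.
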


For $V\in \mathscr{S}_i(\Qu)$ and $M\in \mathbb{C}\Qu-\modd$ we write in the following $M=M^{\kg V} \bigoplus M^{\ntrianglelefteq V}$, where
\begin{equation}\label{eq:abk1}
M^{\trianglelefteq V}=\displaystyle\bigoplus_{B\in \mathscr{P}_i(\Qu); B\kg V}B^{\mu_B(M)}.
\end{equation}

\begin{lem}\label{Fkleiner} For any $V\in \mathscr{S}_i(\Qu)$ and $\phi \in \Hom_{\mathbb{C}\Qu}(\tau^{-1}M,M)$ we have $$F_i(M,V)\le \varepsilon_{i,\phi}.$$
\end{lem}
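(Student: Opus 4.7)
The plan is to express $\varepsilon_{i,\phi}$ as the dimension of a kernel and exhibit a subspace of dimension at least $F_i(M,V)$ inside it. Applying $\Hom_{\mathbb{C}\Qu}(-,S(i))$ to the right-exact sequence $\tau^{-1}M \xrightarrow{\phi} M \to \Coker\phi \to 0$ yields the identification $\varepsilon_{i,\phi} = \ell_i(\Coker\phi) = \dim \ker \phi^*$, where $\phi^*\colon \Hom_{\mathbb{C}\Qu}(M,S(i)) \to \Hom_{\mathbb{C}\Qu}(\tau^{-1}M,S(i))$ is precomposition with $\phi$. I would then take as candidate $U := \Hom_{\mathbb{C}\Qu}(M^{\kg V}, S(i))$, embedded in $\Hom_{\mathbb{C}\Qu}(M,S(i))$ via the direct-sum projection $M \twoheadrightarrow M^{\kg V}$; by specialness of $\Qu$, $\dim U = \sum_{B \in \mathscr{P}_i(\Qu),\,B \kg V}\mu_B(M)$.

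The heart of the argument is an Auslander--Reiten vanishing, used twice. Writing $\mathcal{B} := \mathscr{P}_i(\Qu) \cap \{\cdot \kg V\}$, I would first show that for every $B' \in \mathcal{B}$ and every indecomposable summand $C$ of $M^{\ntrianglelefteq V}$ the space $\Hom_{\mathbb{C}\Qu}(\tau^{-1}C, B')$ vanishes: via the AR duality $\Hom(\tau^{-1}C,B') \cong D\Ext^1(B',C)$, a non-zero class translates to a path $C \to \tau B' \to B' \to V_j$ in $\Gamma_{\Qu}$ (the middle arrow given by the AR-sequence of $B'$, the last by $B' \kg V$); as $V_j \in \mathscr{P}_i(\Qu)$, Proposition~\ref{partialorder} then forces a non-zero composition $C \to S(i)$, i.e.\ $C \in \mathscr{P}_i$ with $C \kg V$, contradicting that $C$ is a summand of $M^{\ntrianglelefteq V}$ (the edge cases where $C$ is injective or $B'$ is projective are vacuous). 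Second, I would establish the companion statement $\Hom_{\mathbb{C}\Qu}(C', B') = 0$ whenever $C' \in \mathscr{P}_i(\Qu) \setminus \mathcal{B}$ and $B' \in \mathcal{B}$: a non-zero map would give a path $C' \to B' \to V_j$, hence $C' \kg V$ in the poset $\mathscr{P}_i(\Qu)$, contradicting $C' \ntrianglelefteq V$.

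The first vanishing shows that $\pi_{\kg V} \circ \phi$ factors through $\tau^{-1}(M^{\kg V})$ as a map $\psi_0\colon \tau^{-1}(M^{\kg V}) \to M^{\kg V}$, identifying $\rank(\phi^*|_U) = \rank \psi_0^*$. The second vanishing, applied with $C' := \tau^{-1}B$ for each $B \in \mathcal{B}$ non-injective with $\tau^{-1}B \in \mathscr{P}_i(\Qu) \setminus \mathcal{B}$, forces the corresponding blocks of the block matrix of $\psi_0$ to be zero; only the blocks with $\tau^{-1}B \in \mathcal{B}$ survive, giving
\[
\rank \psi_0^* \;\le\; \sum_{\substack{B \in \mathcal{B},\,B \text{ non-inj}\\ \tau^{-1}B \in \mathcal{B}}}\mu_B(M) \;\le\; \sum_{B \in \mathcal{B}}\mu_{\tau B}(M)
\]
after reindexing via $B \leftrightarrow \tau B$. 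Rank--nullity then gives $\varepsilon_{i,\phi} \ge \dim U - \rank \psi_0^* \ge F_i(M,V)$, completing the argument. The principal obstacle will be spotting that the proof requires the AR vanishing in both forms: the naive row-dimension bound $\rank \psi_0^* \le \ell_i(\tau^{-1}(M^{\kg V}))$ is in general strictly too loose, and the second form is what removes the over-counted contributions of summands $B \in \mathcal{B}$ whose translate $\tau^{-1}B$ drops out of $\mathcal{B}$.
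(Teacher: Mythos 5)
Your strategy---bound $\varepsilon_{i,\phi}=\dim\ker\phi^*$ from below by intersecting with $U:=\Hom_{\mathbb{C}\Qu}(M^{\kg V},S(i))$ and applying rank--nullity---is essentially the paper's, but the computation of $\rank(\phi^*|_U)$ has a genuine gap: you conflate $\tau^{-1}(M^{\kg V})$ with $(\tau^{-1}M)^{\kg V}$, and these are not the same module. Consequently your first vanishing claim, that $\Hom_{\mathbb{C}\Qu}(\tau^{-1}C,B')=0$ for every indecomposable summand $C$ of $M^{\ntrianglelefteq V}$ and every $B'\in\mathscr{P}_i(\Qu)$ with $B'\kg V$, is false, so $\pi_{\kg V}\circ\phi$ need not factor through $\tau^{-1}(M^{\kg V})$. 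A counterexample: in type $A_3$ as in Example~\ref{15}(1), take $i=3$, $V=[110]$ (so $B\kg V$ means $B\in\{[111],[110]\}$), and $M=[011]\oplus[110]$. Then $[011]$ is a summand of $M^{\ntrianglelefteq V}$, yet $\tau^{-1}[011]=[110]$ and $\Hom_{\mathbb{C}\Qu}([110],[110])\ne 0$. Taking $\phi\colon\tau^{-1}M=[110]\hookrightarrow M$ to be the inclusion onto the summand $[110]$, one gets $\rank(\phi^*|_U)=1$, whereas $\tau^{-1}(M^{\kg V})=\tau^{-1}[110]=0$ since $[110]$ is injective, so your $\psi_0=0$ and $\rank\psi_0^*=0$; your chain of estimates would then yield $\varepsilon_{i,\phi}\ge 1$, while in fact $\Coker\phi\cong[011]$ gives $\varepsilon_{i,\phi}=0=F_i(M,V)$. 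The AR-duality derivation of the vanishing is also unsound: the interpolated arrow $\tau B'\to B'$ does not carry a nonzero morphism (a composition through a middle term of the almost split sequence vanishes), and Proposition~\ref{partialorder} requires both endpoints of the path to already lie in $\mathscr{P}_i(\Qu)$---it cannot be used to deduce $C\in\mathscr{P}_i(\Qu)$.

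The fix is exactly the paper's move: decompose $\tau^{-1}M=(\tau^{-1}M)^{\kg V}\oplus(\tau^{-1}M)^{\ntrianglelefteq V}$ without pulling $\tau^{-1}$ inside. For any $f\in U$ the composition $f\circ\phi$ automatically vanishes on $(\tau^{-1}M)^{\ntrianglelefteq V}$: on a summand $D\notin\mathscr{P}_i(\Qu)$ because $\Hom_{\mathbb{C}\Qu}(D,S(i))=0$, and on a summand $D\in\mathscr{P}_i(\Qu)$ with $D\ntrianglelefteq V$ because a nonzero composition $D\to M^{\kg V}\to S(i)$ would force $D\kg V$ by transitivity---which is precisely your \emph{second} vanishing claim, the one that is correct. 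Hence $\phi^*|_U$ lands in $\Hom_{\mathbb{C}\Qu}((\tau^{-1}M)^{\kg V},S(i))$, whose dimension by specialness is $\sum_{B\kg V}\mu_B(\tau^{-1}M)=\sum_{B\kg V}\mu_{\tau B}(M)$, and rank--nullity yields $F_i(M,V)\le\varepsilon_{i,\phi}$ directly. In short, only the $\kg$-transitivity vanishing is used; the AR-translate vanishing you invoke is both false and unnecessary.
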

\begin{proof}
First note that for $\phi \in \Hom_{\mathbb{C}\Qu}(\tau^{-1}M,M)$ the short exact sequence
$$0 \rightarrow \im\phi \rightarrow M \rightarrow \Coker \phi \rightarrow 0$$
induces the exact sequence
$$0 \rightarrow \Hom_{\mathbb{C}\Qu}(\Coker\phi,S(i)) \rightarrow \Hom_{\mathbb{C}\Qu}(M,S(i)) \rightarrow \Hom_{\mathbb{C}\Qu}(\im\phi,S(i)).$$ We thus obtain the inequality:
\begin{equation}\label{eq:inequ}
\ell_i(\Coker\phi)\ge \ell_i(M)-\ell_i(\im\phi).
\end{equation}
Setting
$$\phi_{\kg V}:=\pi_{M^{\kg}}\circ \phi,$$
where $\pi_{M^{\kg}}:M\twoheadrightarrow M^{\kg V}$ denotes the canonical projection, we have
\begin{align*}
\varepsilon_{i,\phi}&\ge \dim\{f\in\Hom_{\mathbb{C}\Qu}(M,S(i)) \mid f\circ\phi=0, \ f|_{M^{\ntrianglelefteq V}}=0\}\\
&= \dim \{f\mid\ f\circ\phi_{\kg V}=0, \ f|_{M^{\ntrianglelefteq V}}=0\} = \ell_i\left(M^{\trianglelefteq V}/\im{\phi_{\kg V}}\right).
\end{align*}
Since, by (\ref{eq:dimhom})
\begin{align*}
\ell_i(M^{\trianglelefteq V})&=\underset{B\kg V}{\sum_{B\in\mathscr{P}_i(\Qu),}}\mu_B(M)\,\,\,\,\,\,\text{ and} \\
\ell_i(\im{\phi}_{\kg V})&\le \ell_i\left((\tau^{-1}M)^{\trianglelefteq V}\right)=\underset{B\kg V}{\sum_{B\in\mathscr{P}_i(\Qu),}}\mu_B(\tau^{-1}M)
\end{align*}
we obtain by (\ref{eq:inequ}) applied to $\phi_{\kg V}$:
$$\ell_i\left(M^{\trianglelefteq V}/\im{\phi_{\kg V}}\right) \ge \ell_i(M^{\trianglelefteq V})-\ell_i(\im{\phi_{\kg V}}) \ge \underset{B\kg V}{\sum_{B\in\mathscr{P}_i(\Qu),}}\mu_B(M)-\mu_{B}\left(\tau^{-1}M\right) = F_i(M,V).
$$
\end{proof}

We write $M^{\kg S(i)}=\bigoplus_{j=1}^{m_1}B_j$ and $(\tau^{-1}M)^{\kg S(i)}=\bigoplus_{k=1}^{m_2}C_k$ for the  decompositions into indecomposable direct summands. We call $\phi \in \Hom_{\mathbb{C}\Qu}(\tau^{-1}M, M)$ \emph{combinatorial} if
\begin{itemize}
\item $\phi|_{(\tau^{-1}M)^{\ntrianglelefteq S(i)}}=0$,
\item $\phi|_{C_k}$ is either zero or $\im(\phi|_{C_k})\subset B_{j(k)}$ for a $j(k)\in\{1,2,\ldots,m_1\}$ such that $\phi|_{C_k}$ is inducing an isomorphism $\Hom_{\mathbb{C}\Qu}(B_{j(k)},S(i))\cong \Hom_{\mathbb{C}\Qu}(C_k,S(i))$,
\item $j(k_1)\ne j(k_2)$ for $k_1\ne k_2$.
\end{itemize}

\begin{lem}\label{combinatorial}  Assume that $\phi \in \Hom_{\mathbb{C}\Qu}(\tau^{-1}M, M)$ is combinatorial. Then
\begin{equation}\label{additive}\ell_i(\Coker\phi)=\ell_i(M)-\ell_i(\im\phi)
\end{equation}
and
$$\ell_i(\im\phi)=\#\{B\in \mathscr{P}_i(\Qu) \mid \mu_B(\tau^{-1}M) \ne 0 \text{ and } \phi(B)\ne 0\}.$$
\end{lem}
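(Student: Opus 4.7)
The plan is to unpack the combinatorial condition on $\phi$ to identify $\im\phi \subset M$ explicitly, and then to read off both equalities from that structural picture.

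First I would show
\[ \im\phi = \bigoplus_{k:\,\phi|_{C_k}\ne 0} \im(\phi|_{C_k}). \]
By the first clause of combinatoriality, $\phi$ vanishes on $(\tau^{-1}M)^{\ntrianglelefteq S(i)}$, so the image is already generated by the $\im(\phi|_{C_k})$. The second and third clauses place each non-zero $\im(\phi|_{C_k})$ inside a pairwise distinct indecomposable summand $B_{j(k)}$ of $M^{\kg S(i)} \subseteq M$, and distinct direct summands of $M$ intersect trivially, so the sum is in fact direct.

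Next I would analyse $\Hom(\im(\phi|_{C_k}), S(i))$ for each $k$ with $\phi|_{C_k}\ne 0$. Factoring $\phi|_{C_k}$ as $C_k \twoheadrightarrow \im(\phi|_{C_k}) \hookrightarrow B_{j(k)}$ and applying $\Hom(-,S(i))$ produces
\[ \Hom(B_{j(k)},S(i)) \hookrightarrow \Hom(\im(\phi|_{C_k}),S(i)) \hookrightarrow \Hom(C_k,S(i)), \]
whose composite is the isomorphism guaranteed by combinatoriality. Since $\Qu$ is special and $B_{j(k)},C_k\in\mathscr{P}_i(\Qu)$, both outer spaces are one-dimensional, forcing the middle one to be one-dimensional and both arrows to be isomorphisms. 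Summing over $k$ and combining with the direct sum decomposition of $\im\phi$ yields the second identity, where the count runs over the indecomposable direct summands of $\tau^{-1}M$ in $\mathscr{P}_i(\Qu)$ whose $\phi$-image is non-zero.

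For the first identity it suffices to show that the restriction map $\Hom(M,S(i)) \to \Hom(\im\phi,S(i))$ coming from $0\to\im\phi \to M \to \Coker\phi\to 0$ is surjective. Given $\alpha = (\alpha_k)_k \in \Hom(\im\phi,S(i))$, the isomorphism $\Hom(B_{j(k)},S(i)) \xrightarrow{\sim} \Hom(\im(\phi|_{C_k}),S(i))$ just established lifts each $\alpha_k$ to some $\beta_k\colon B_{j(k)} \to S(i)$. The distinctness of the $j(k)$ allows me to assemble the $\beta_k$ into a morphism on $\bigoplus_k B_{j(k)}\subset M$, which I then extend by zero along a complementary summand of $M$ to obtain a preimage of $\alpha$. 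The main obstacle is managing multiplicities correctly: several isomorphic copies of an indecomposable may occur in $\tau^{-1}M$ or in $M$, and precisely the injectivity of $k\mapsto j(k)$ together with specialness of $\Qu$ rules out accidental collisions, guaranteeing both the direct-sum structure of $\im\phi$ and the simultaneous liftability of $\alpha$.
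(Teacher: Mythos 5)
Your proposal is correct and follows the same route as the paper: both establish surjectivity of $\Hom_{\mathbb{C}\Qu}(B_{j(k)},S(i))\to\Hom_{\mathbb{C}\Qu}(\phi(C_k),S(i))$ from the combinatorial condition and then deduce surjectivity of the restriction map $\Hom(M,S(i))\to\Hom(\im\phi,S(i))$, and both count $\ell_i(\im\phi)$ via the one-dimensionality of the relevant $\Hom$-spaces guaranteed by specialness. The paper's proof is terse on the step from the per-summand surjectivity to the global one; your explicit direct-sum decomposition of $\im\phi$ and the lifting argument using the injectivity of $k\mapsto j(k)$ fills in exactly the detail the paper leaves implicit.
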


\begin{proof}
Since by assumption that $\phi$ is combinatorial for any $k$ the map
\begin{equation*} \Hom_{\mathbb{C}\Qu}(B_{j(k)},S(i))\rightarrow\Hom_{\mathbb{C}\Qu}(\phi \left( C_k \right),S(i))\end{equation*}
is surjective, we obtain that the  map $\psi$ in the exact sequence
$$0\rightarrow \Hom_{\mathbb{C}\Qu}(\Coker\phi,S(i)) \rightarrow \Hom_{\mathbb{C}\Qu}(M,S(i)) \xrightarrow{\psi} \Hom_{\mathbb{C}\Qu}(\im\phi,S(i))$$
is surjective. This implies (\ref{additive}). Furthermore we have, since $\Qu$ is special,
$$\ell_i(\im\phi)=\sum_{k:\phi(C_k)\ne 0}\ell_i(C_k)=\#\{B\in \mathscr{P}_i(\Qu) \mid \mu_B(\tau^{-1}M) \ne 0 \text{ and } \phi(B)\ne 0\}.$$

\end{proof}

We now fix $i\in I$. To prove Proposition \ref{3}, we show that for each $\phi$ in a certain class of combinatorial homomorphisms there exists $V^{\phi} \in \mathscr{S}_i(\Qu)$ such that $F_i(M,V^{\phi})=\varepsilon_{i,\phi}$. For this we work in a purely combinatorial setup by introducing the directed graph $\mathscr{P}_M^{\infty}$. This graph, which corresponds to the arrangement of direct summand of $M$ and $\tau^{-1}M$ in the poset $\mathscr{P}_i(\Qu)$, forms the main tool of our approach.

To construct $\mathscr{P}_M^{\infty}$, let $\mathscr{P}$ be the Hasse diagram corresponding to the poset $\mathscr{P}_i(\Qu)$, i.e. there is a vertex $v_B$ in $\mathscr{P}$ corresponding to each $B\in\mathscr{P}_i(\Qu)$ and an arrow $v_{B_1}\rightarrow v_{B_2}$ in $\mathscr{P}$ if and only if $B_1\vartriangleleft B_2$ minimally. For each $\mathbb{C}\Qu$-module $M$, we construct the following oriented graph $\mathscr{P}_M^{\infty}$:

We replace each vertex $v_{B}$ of $\mathscr{P}$ by a chain
$$v_{B^{(1)}}\rightarrow v_{B^{(2)}}\rightarrow\ldots\rightarrow v_{B^{(l_B)}}$$ where $l_B:=\max(1, \mu_B(M),\mu_B(\tau^{-1}M)).$ For each arrow $v_{B_1}\rightarrow v_{B_2}$ in $\mathscr{P}$, we add an arrow $v_{{B_1}^{(l_{B_1})}}\rightarrow v_{{B_2}^{(1)}}$ in $\mathscr{P}_M^{\infty}$. We further add a vertex $v_{\infty}$ and for each vertex $B$ of $\mathscr{P}$ which corresponds to a $\kg$-maximal element of $\mathscr{P}_i(\Qu)$, we add an arrow $v_{B^{(l_B)}}\rightarrow v_{\infty}$ in $\mathscr{P}_M^{\infty}$.

\begin{ex}\label{kgminimal} Let $\Qu= 1 \leftarrow 2 \rightarrow 3$ and $i=2$
(compare with part \ref{Qalternatierend} of Example \ref{15}). Let $[M]= [111]\oplus [111] \oplus [100] \oplus [011] \oplus [010]$, then $\mathscr{P}_M^{\infty}$ looks as follows.
\begin{equation*}
\xymatrix{
& & v_{011^{(1)}} \ar@{->}[rd] & & \\
v_{111^{(1)}} \ar@{->}[r] & v_{111^{(2)}} \ar@{->}[ru]\ar@{->}[rd] & & v_{010^{(1)}} \ar@{->}[r] & v_{010^{(2)}} \ar@{->}[r] & v_{\infty}\\
& &  v_{110^{(1)}} \ar@{->}[ru] & & &}
\end{equation*}

\end{ex}

Let $(\mathscr{P}_M^{\infty})_0$ be the set of vertices of the graph $\mathscr{P}_M^{\infty}$. We extend the partial order on $\mathscr{P}$ to a partial order on $(\mathscr{P}^{\infty}_M)_0$ by setting $v_1\kg v_2$ if and only if there is a path from $v_1$ to $v_2$ in $\mathscr{P}^{\infty}_M$.

For calculations in our graph, we introduce the category $\mathscr{A}\equiv\mathscr{A}_{\mathscr{P}_M^{\infty}}$ in which the objects are subsets of $(\mathscr{P}_M^{\infty})_0$ . For $W_1, W_2 \in \mathscr{A}$, a morphism $\phi:W_1 \rightarrow W_2$ is a map of sets $\phi: W_1\cup v_{\infty} \rightarrow W_2\cup v_{\infty}$ satisfying the following properties:
\begin{enumerate}
\item for all $w \in W_1$, we have $w\le \phi(w)$,
\item $\phi|_{W_1 \backslash \left\{\phi^{-1}(v_{\infty})\right\}}$ is injective.
\end{enumerate}

\begin{rem}
The first defining property of a morphism in $\mathscr{A}$ corresponds to the fact that we want to resemble homorphisms in $\mathbb{C}\Qu-\modd$ and the second defining property comes from the fact that we are interested in the study of combinatorial $\phi\in \Hom_{\mathbb{C}\Qu}(\tau^{-1}M,M)$.
\end{rem}

We color the subset of $(\mathscr{P}^{\infty}_M)_0$ corresponding to the direct summands of $\tau^{-1} M$ red and the subset of $(\mathscr{P}^{\infty}_M)_0$ corresponding to the direct summands of $M$ white and call these sets $R$ and $W$, respectively. Using these two sets we introduce combinatorial analogs of the functions $\varepsilon_{i,\phi}$  and $F_i(M,V)$.

For a subset $V\subset (\mathscr{P}^{\infty}_M)_0$ we denote by $|V|$ the cardinality of this set an we define
$$F_i(V):=| W\cap V | - | R\cap V |.$$ For any $\phi \in \Hom_{\mathscr{A}}(R,W)$ let further
$$\varepsilon_i(\phi):=| W\backslash \phi(R)|.$$
We define the closure of $V$with respect to the ordering $\kg$.
$$V^{\downarrow}:=\{P\in ({\mathscr{P}_M^{\infty}})_0 \mid P \kg v_B \text{ for some }v_B\in V\}.$$
A crucial role in our approach plays the following preorder on $\Hom_{\mathscr{A}}(R,W)$:
$$\phi \preceq \psi \text{ if and only if } \exists \rho \in \Hom_{\mathscr{A}}(W,W) \text{ and } \psi(R)=\rho\circ \phi(R),$$
where the equality is an equality of sets.

Loosely speaking, $\phi \preceq \psi$ says that we can move the elements of $\phi(R)$ to the elements of $\psi(R)$ along paths in $\mathscr{P}_M^{\infty}$.

\begin{ex}
We continue with Example \ref{minimal}. Here $W=\{v_{111^{(1)}},v_{111^{(2)}},v_{110^{(1)}},v_{011^{(1)}},v_{101^{(1)}}\}$ and $R=\{v_{010^{(1)}},v_{010^{(2)}}\}$. Let $\phi_1,\phi_2 \in \Hom_{\mathscr{A}}(R,W)$ be given by
\begin{align*}
\phi_1(v_{010^{(1)}})&=v_{010^{(1)}},  & \phi_1(v_{010^{(2)}})&=v_{\infty},\\
\phi_2(v_{010^{(1)}})&=v_{\infty}, & \phi_2(v_{{010}^{(2)}})&=v_{\infty}.
\end{align*}
For $\rho\in\Hom_{\mathscr{A}}(W,W)$ given by $\rho(v_{010^{(1)}})=v_{\infty}$ and $\rho|_{W\backslash\{ v_{010^{(1)}}\}}=\id_{{W\backslash \{v_{010^{(1)}}\}}}$, we have $\rho \circ \phi_1=\phi_2$ and thus $\phi_1 \preceq \phi_2$.
\end{ex}

We define $\phi\in \Hom_{\mathscr{A}}(R,W)$ to be \emph{$\preceq$-minimal} if for each $\psi\in \Hom_{\mathscr{A}}(R,W)$ such that $\psi \preceq \phi$, we also have $\phi \preceq \psi$.

Note that this ordering is not anti-symmetric, so the above does not imply $\phi=\psi$. The notion of $\preceq$-minimality allows us to prove the following proposition.

\begin{prop}\label{f=e} Let $\phi\in \Hom_{\mathscr{A}}(R,W)$ be $\preceq$-minimal and assume that $\varepsilon_i(\phi)>0$. Then there exists $V^{\phi} \subset \mathscr{P}_M^{\infty}\backslash \{v_{\infty}\}$ such that
$$F_i(V^{\phi})=\varepsilon_i(\phi).$$
Furthermore we have $W\backslash V^{\phi} \subset \im \phi$ .
\end{prop}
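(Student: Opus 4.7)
The plan is to construct $V^\phi$ as an ``alternating-path closure'' of $U := W\setminus\phi(R)$ inside the graph $\mathscr{P}_M^\infty$. Concretely, I will start with $V_W^{(0)} := U$ and $V_R^{(0)} := \emptyset$, and iteratively set
\[
V_R^{(k+1)} := \{ r\in R : r\kg w \text{ for some } w\in V_W^{(k)} \},
\qquad
V_W^{(k+1)} := V_W^{(k)} \cup \bigl( \{\phi(r) : r\in V_R^{(k+1)}\} \setminus \{v_\infty\} \bigr).
\]
Since $(\mathscr{P}_M^\infty)_0$ is finite, the sequences stabilise to $V_W^\phi$ and $V_R^\phi$, and I set $V^\phi := V_W^\phi\cup V_R^\phi \subset (\mathscr{P}_M^\infty)_0\setminus\{v_\infty\}$. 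The ``furthermore'' assertion $W\setminus V^\phi \subset\im\phi$ is then immediate, since $U \subset V^\phi$ forces $W\setminus V^\phi \subset W\setminus U = \phi(R)\cap W$.

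The main work, and the principal obstacle, is to use $\preceq$-minimality of $\phi$ to rule out that any $r\in V_R^\phi$ has $\phi(r)=v_\infty$. The plan here is a standard augmenting-path argument. Suppose for contradiction that such an $r$ exists; tracing backwards through the iteration produces an alternating sequence $u = w_0, r_1, w_1=\phi(r_1), r_2, \ldots, r_k = r$ with $u\in U$, $r_i\kg w_{i-1}$ for all $i$, $w_i=\phi(r_i)$ for $i<k$, and $\phi(r_k)=v_\infty$. I will swap along this path by defining $\psi(r_1) := u$, $\psi(r_i) := w_{i-1}$ for $2\le i\le k$, and $\psi := \phi$ elsewhere; the path relations $r_i\kg w_{i-1}$ together with $u\notin\phi(R)$ (giving injectivity) make $\psi$ a valid morphism in $\mathscr{A}$. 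Depending on whether $r_k$ was the unique preimage of $v_\infty$ under $\phi$, one obtains $\psi(R) = (\phi(R)\setminus\{v_\infty\})\cup\{u\}$ or $\psi(R) = \phi(R)\cup\{u\}$. In both cases $\rho\colon u\mapsto v_\infty$ (and identity elsewhere) exhibits $\psi\preceq\phi$, while $\phi\preceq\psi$ is obstructed because any $\rho'$ must fix $v_\infty$, so $v_\infty\in\rho'(\phi(R))$ contradicts either $v_\infty\notin\psi(R)$ (first case) or the cardinality bound $|\rho'(\phi(R))|\le|\phi(R)|<|\psi(R)|$ (second case). Hence $\psi\prec\phi$ strictly, contradicting minimality.

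Granted this, $\phi$ restricts to an injection $V_R^\phi\hookrightarrow W$ with image disjoint from $U$, so $V_W^\phi = U\sqcup\phi(V_R^\phi)$ and $|V_W^\phi| = |U|+|V_R^\phi|$. To finish I will verify the two set equalities $V^\phi\cap W = V_W^\phi$ and $V^\phi\cap R = V_R^\phi$. For the first: any $v\in V_R^\phi\cap W\setminus U$ lies in $W\cap\phi(R)$, so $v=\phi(r_v)$ for a unique $r_v\in R$; the relations $r_v\kg v$ (morphism property) and $v\kg w$ for some $w\in V_W^\phi$ (since $v\in V_R^\phi$) give $r_v\kg w$, hence $r_v\in V_R^\phi$ and $v\in\phi(V_R^\phi)\subset V_W^\phi$. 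For the second: any $v\in V_W^\phi\cap R$ trivially satisfies $v\kg v\in V_W^\phi$, so lies in $V_R^\phi$ by definition. Substituting, $F_i(V^\phi) = |V_W^\phi| - |V_R^\phi| = |U| = \varepsilon_i(\phi)$, which is the claimed equality.
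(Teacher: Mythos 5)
Your proof is correct and follows essentially the same strategy as the paper: start with $U=W\setminus\phi(R)$, iteratively enlarge by adding red vertices that lie $\kg$-below the current set and their $\phi$-images, and use $\preceq$-minimality of $\phi$ via an augmenting-path swap to rule out the vertex $v_\infty$ being absorbed. Your presentation actually cleans up two points that the paper handles loosely: you track the alternating path $u,r_1,w_1,\ldots,r_k$ with the alternation between $\kg$-jumps and $\phi$-applications made explicit (the paper writes ``$\phi(r_{j})=r_{j+1}$'' where it really means $r_{j+1}\kg\phi(r_j)$), and you obtain $F_i(V^\phi)=\varepsilon_i(\phi)$ by a direct disjoint-union count of $V_W^\phi$ rather than the paper's inequality chain with two sharpness conditions. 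The one structural difference worth flagging is that your $V^\phi=V_W^\phi\cup V_R^\phi$ is not $\kg$-downward closed, whereas the paper constructs $V^\phi=\HH_\phi(W\setminus\phi(R))$ as a downward-closed set (in fact one can check that the paper's $V^\phi$ equals $(V_W^\phi)^{\downarrow}$ and has the same intersections with $W$ and $R$ as yours). This does not affect the validity of the proposition as stated, but the downward closure is what lets the paper immediately read off an element of $\mathscr{S}_i(\Qu)$ from $V^\phi$ in the proofs of Proposition \ref{3} and Lemma \ref{V0}; if one carried your $V^\phi$ forward, that later step would need the extra observation that passing to $(V_W^\phi)^{\downarrow}$ does not change $F_i$.
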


\begin{proof}
Let $\phi\in \Hom_{\mathscr{A}}(R,W)$ and $V\subset  \mathscr{P}_M^{\infty}\backslash \{v_{\infty}\}$. Note that we have the following inequalities

\begin{align*}
F_i(V^{\downarrow}) &= |W\cap V^{\downarrow}| - | R\cap V^{\downarrow} |  \overset{\text{($\star$)}}{\le} | W \cap V^{\downarrow} | - | \phi\left(R\right) \cap V^{\downarrow}|  \\
  &  \overset{\text{($\star\star$)}}{=} | W \backslash \phi(R) \cap V^{\downarrow} |
 \overset{\text{($\star\star\star$)}}{\le} | W \backslash \phi(R) | = \varepsilon_i(\phi),
\end{align*}
where the inequality ($\star$) comes from the first defining property of a morphism in $\mathscr{A}$ and the equality ($\star\star$) comes from the assumption that $v_{\infty}\notin V$. Note that $\phi(R)\cap V^{\downarrow}=\phi(R\cap V^{\downarrow})\cap V^{\downarrow}$ by the first defining property of a morphism in $\mathscr{A}$. Thus, by the second defining property of a morphism in $\mathscr{A}$, the inequality $(\star)$ is an equality if and only if

\begin{equation}\label{eq:stabel}
\phi\left(R \cap V^{\downarrow}\right) \subset V^{\downarrow}.
\end{equation}
Further the inequality $(\star\star\star)$ is an equality if and only if

\begin{equation}\label{eq:stabel2}
W\backslash \phi(R) \subset V^{\downarrow}.
\end{equation}
Let us now further assume that $\varepsilon_i(\phi)>0$ which implies $W\backslash \phi(R)\ne\varnothing$. We extend $\left(W\backslash \phi(R)\right)^{\downarrow}$ to a subset of $(\mathscr{P}_M^{\infty})_0$ satisfying Property (\ref{eq:stabel}). For that, let $\mathcal{P}(({\mathscr{P}_M^{\infty}})_0)$ be the power set of $({\mathscr{P}_M^{\infty}})_0$ and consider the operator
\begin{align*} \Phi: \mathcal{P}(({\mathscr{P}_M^{\infty}})_0) & \rightarrow \mathcal{P}(({\mathscr{P}_M^{\infty}})_0)\\
V &\mapsto\left(\phi \left(R \cap V^{\downarrow}  \right) \cup V \right)^{\downarrow}.
\end{align*}
Note that, for $V_1,V_2\subseteq (\mathscr{P}_M^{\infty})_0$ with $V_1\subset V_2$, we have
\begin{equation}\label{eq:monotony}
V_1\subset \Phi(V_1) \subset \Phi(V_2).
\end{equation}
We therefore obtain the closure operator
\begin{align*}  \HH_\phi:  \mathcal{P}(({\mathscr{P}_M^{\infty}})_0) &\rightarrow  \mathcal{P}(({\mathscr{P}_M^{\infty}})_0) \\
\HH_{\phi}(V)=&\Phi^{k}(V),
\end{align*}
where $k$ is the smallest number such that $\Phi^{k}(V)=\Phi^{k +1}(V)$. We then define $V^{\phi}\subset (\mathscr{P}_M^{\infty})_0$ by
\begin{equation}\label{eq:huellenv}
V^{\phi}:=\HH_\phi (W\backslash \phi(R)).
\end{equation}
We obtain that $V^\phi$ satisfies by construction $V^\phi=\left(V^\phi\right)^{\downarrow}$ as well as property (\ref{eq:stabel}) and (\ref{eq:stabel2}). Hence we have the claimed equality $F_i(V^{\phi})=\varepsilon_i(\phi)$ if and only if $v_{\infty}\notin V^{\phi}$. Note further that since $W\backslash\phi(R)\subset V^{\phi}=\HH_{\phi}(W\backslash\phi(R))$, we have $W\backslash V^{\phi}\subset \im\phi$.

Assume that $\phi$ is $\preceq$-minimal. It remains to show that $v_{\infty}\notin V^{\phi}$.

If $v_{\infty}\in V^{\phi}$ then $v_{\infty}\in \Phi^k(W\backslash \phi(R))$ but $v_{\infty}\notin \Phi^{k-1}(W\backslash \phi(R))$ since $v_{\infty}^{\downarrow}=(\mathscr{P}_M^{\infty})_0$. Furthermore, since $v_{\infty}\in \Phi^k(W\backslash \phi(R))$, there exists $r_k \in \left(\Phi^{k-1}(W\backslash \phi(R))\right)\cap R$ such that $\phi(r_k)=v_{\infty}$. Likewise, since $r_k \in \left(\Phi^{k-1}(W\backslash \phi(R))\right)\cap R$, there exists $r_{k-1}\in \left(\Phi^{k-2}(W\backslash \phi(R))\right)\cap R$ such that $\phi(r_{k-1})=r_k$. Repeating this argument we get $r_1,r_2,\ldots, r_k \in W\cap R$ such that $\phi(r_j)=r_{j+1}$ for $1 \le j \le k-1$. But this is only possible if there exists $w\in W\backslash \phi(R)$ with $r_1 \kg w$ (otherwise $\phi(r_1) \in \left(W\backslash \phi(R)\right)^{\downarrow}$).

We define $\phi'\in\Hom_{\mathscr{A}}(R,W)$ by
\begin{align*}
\phi' |_{R\backslash \{r_1,r_2,\ldots r_k\}}&=\phi|_{R\backslash \{r_1,r_2,\ldots r_k\}},\\
\phi'(r_j)&=r_j \text{ for }1 <  j \le k-1\\
\phi'(r_1)&=w.
\end{align*}
Then $\rho \circ \phi'(R)=\phi(R)$ for $\rho \in \Hom_{\mathscr{A}}(W,W)$ given by $\rho(w)=v_{\infty}$ and $\rho_{W\backslash\{w\}} = \id_{W\backslash\{w\}}$.
By $\preceq$-minimality of $\phi$, there exists $\rho' \in \Hom_{\mathscr{A}}(W,W)$ with $\im \rho'\circ \phi  = \im \phi'$, yielding
$$
\im \phi'  = \im \underbrace{\rho' \circ \rho}_{=:\widetilde\rho} \circ \phi'.
$$
This implies that $\widetilde\rho$ and hence $\rho|_{\im \phi'}$ is injective in contradiction to $\rho (w) = \rho (v_{\infty})=v_{\infty}$. The equality $\rho (v_{\infty})=v_{\infty}$ here comes from the first defining property of a morphism in $\mathscr{A}$.
\end{proof}

\begin{ex} We give an example for the construction of $V^{\phi}$ for a minimal $\phi\in\Hom_{\mathscr{A}}(R,W)$. Assume that $\mathscr{P}_M^{\infty}$ is given as follows:
\begin{equation*}
\xymatrix{  & & & & & \\
& &{B_3^{(1)}} \ar[rr]   &  & {B_3^{(2)}}\ar@/{}^{.5pc}/[rrdd] \\
{B_1^{(1)}}   \ar[rru]  \ar[rrd] & & &  &  \\
& & {B_4^{(1)}} \ar[rr] &  & {B_4^{(2)}} \ar[rr]  & & v_{\infty}\\
{B_2^{(1)}}  \ar[rru] \ar[rrd]& & & & \\
& &{B_5^{(1)}}\ar@/{}_{1.8pc}/[rrrruu] & & &
}
\end{equation*}
Further assume that $R$ and $W$ are given as follows:
\begin{align*}
R&=\{v_{{B_1}^{(1)}}, v_{{B_2}^{(1)}}\} \text{ and }\\ W&=\{v_{{B_3}^{(1)}},v_{{B_3}^{(2)}},v_{{B_4}^{(1)}},v_{{B_4}^{(2)}},v_{{B_5}^{(1)}}\}.
\end{align*}
We define $\phi\in\Hom_{\mathscr{A}}(R,W)$ by
\begin{align*} \phi(v_{{B_1}^{(1)}})&=v_{{B_4}^{(1)}},\\ \phi(v_{{B_2}^{(1)}})&=v_{{B_4}^{(2)}}
\end{align*}
and note that $\phi$ is $\preceq$-minimal. We illustrate the situation in Picture (\ref{eq:redandwhitepoints}) below where the vertices in $W$ are drawn as circles and the vertices in $R$ are drawn as red bullets. The dotted lines indicate how the vertices in $R$ are mapped by $\phi$.

\begin{equation}\label{eq:redandwhitepoints}
\begin{gathered}
\xymatrix{
 & & & & &  \\
& & \overset{B_3^{(1)}}{\circ} \ar[rr]   &  & \overset{B_3^{(2)}}{\circ} \ar@/{}^{.5pc}/[rrdd] \\
\overset{B_1^{(1)}}{\textcolor{red}{\text{$\bullet$}}}  \ar@/_{.99pc}/@{-->}[rrd]_{\phi} \ar[rru]  \ar[rrd] & & & &  \\
&  & \overset{B_4^{(1)}}{\circ} \ar[rr] &  & \overset{B_4^{(2)}}{\circ} \ar[rr]  & & v_{\infty}\\
\overset{B_2^{(1)}}{\textcolor{red}{\text{$\bullet$}}}   \ar@/_{.99pc}/@{-->}[rrrru]_{\phi} \ar[rru] \ar[rrd]& & & & \\
& & \overset{B_5^{(1)}}{\circ}\ar@/{}_{1.8pc}/[rrrruu] & & \\
& & & &
}
\end{gathered}
\end{equation}

We have
$$W\backslash \phi(R)=\{v_{{B_3}^{(1)}},v_{{B_3}^{(2)}},v_{{B_5}^{(1)}}\}.$$

Now the operator $\Phi$ adds to the set $W\backslash \phi(R)$ the closure (with respect to $\kg$) of all vertices that are in the image of $\phi |_{v_B}$ for each $v_B \in R$ with $v_B \kg W\backslash \phi(R)$. Hence the closure of the image of $\phi$ restricted to the set of all $v_B$ that are on the left of the blue dotted lines in Picture (\ref{eq:dotted}).

\begin{equation}\label{eq:dotted}
\begin{gathered}
\xymatrix{  & & & & & & \ar@{--}@[blue][dddlllll] \\
& & \overset{B_3^{(1)}}{\circ} \ar[rr]   &  & \overset{B_3^{(2)}}{\circ} \ar@/{}^{.5pc}/[rrdd] \\
\overset{B_1^{(1)}}{\textcolor{red}{\text{$\bullet$}}}   \ar[rru]  \ar[rrd] & & &  &  \\
& \ar@{--}@[blue][dddrrrrr] & \overset{B_4^{(1)}}{\circ} \ar[rr] &  & \overset{B_4^{(2)}}{\circ} \ar[rr]  & & v_{\infty}\\
\overset{B_2^{(1)}}{\textcolor{red}{\text{$\bullet$}}}   \ar[rru] \ar[rrd]& & & & \\
& & \overset{B_5^{(1)}}{\circ}\ar@/{}_{1.8pc}/[rrrruu] &  &   &  & & \\
& & & & & & & & &
}
\end{gathered}
\end{equation}
Thus
\begin{align*}
\Phi(W\backslash \phi(R))&=\left(\phi((W\backslash \phi(R))^{\downarrow}\cap R )\cup (W\backslash \phi (R))\right)^{\downarrow} \\
&=\{v_{{B_1}^{(1)}}, v_{{B_2}^{(1)}},v_{{B_3}^{(1)}},v_{{B_3}^{(2)}},v_{{B_4}^{(1)}},v_{{B_4}^{(2)}},v_{{B_5}^{(1)}}\},\\
\Phi^2(W\backslash \phi(R))&=\Phi (W\backslash \phi(R)).
\end{align*}
We note that the $\Phi$ already stabilizes after being applied the first time and does thus coincide with the closure operator $\HH_{\phi}$.

We conclude
$$V^{\phi}=\{v_{{B_5}^{(1)}},v_{{B_4}^{(2)}},v_{{B_3}^{(2)}}\}^{\downarrow}.$$
\end{ex}

We are now able to prove Proposition \ref{3}:
\begin{proof}[Proof of Proposition \ref{3}]

Since Lemma \ref{Fkleiner} yields $F_i(M,V)\le \varepsilon_{i,\psi}$ for any $V\in \mathscr{S}_i(\Qu)$ and any
$\psi \in \Hom_{\mathbb{C}\Qu}(\tau^{-1}M,M)$, it suffices to show the existence of a $\phi\in \Hom_{\mathbb{C}\Qu}(\tau^{-1}M,M)$ and a $V$ in $\mathscr{S}_i(\Qu)$ such that $F_i(M,V)= \varepsilon_{i,\psi}$.

Let $\phi$ be any $\preceq$-minimal element in
$\Hom_{\mathscr{A}}(R,W)$. We choose a corresponding element in $\Hom_{\mathbb{C}\Qu}(\tau^{-1}M,M)$, which we denote by $\widetilde{\phi}$, the following way:
for any $B,B'\in \mathscr{P}_i(\Qu)$ such that $v_{B}\in R$, $v_{B'}\in W$ and $\phi(v_B)=v_{B'}$, we let $\widetilde{\phi}|_B:B \rightarrow M$ be a composition of irreducible morphisms $B\rightarrow B'$ that induces an isomorphism $\Hom_{\mathbb{C}\Qu}(B',S(i))\rightarrow \Hom_{\mathbb{C}\Qu}(B,S(i))$ which exists by Proposition \ref{partialorder}. Then $\widetilde\phi$ is combinatorial and we get by Proposition \ref{combinatorial}

$$\ell_i\left(\Coker\widetilde{\phi}\right)=\displaystyle\sum_{B\in \mathscr{P}_i(\Qu)} \mu_B(M) - \#\{B\in \mathscr{P}_i(\Qu) \mid \mu_{(B)}(\tau^{-1} M)\ne 0 \text{ and } \widetilde{\phi}(B)\ne 0 \}.$$

This yields the equality

$$\varepsilon_i(\phi)= W\backslash \phi(R) = \varepsilon_{i,\widetilde{\phi}}.$$

Let us first assume that $\varepsilon_{i,\widetilde{\phi}}>0$.

By Proposition \ref{f=e} we deduce that  $V^{\phi}=\HH_{\phi}(W\backslash(\phi(R)))$ satisfies the equality
$$\varepsilon_i(\phi)=F_i(V^{\phi}).$$

Since $V^{\phi} \in \left(\mathscr{P}_M^{\infty}\right)_0 \backslash \{v_{\infty}\}$, we get an induced element in $\mathscr{S}_i(\Qu)$ by taking the direct sum of those $B\in \mathscr{P}_i(\Qu)$ for which $v_B$ is a $\kg$-maximal element of $V^{\phi}$. We denote the induced element in $\mathscr{S}_i(\Qu)$ also by $V^{\phi}$ by abuse of notation. This yields the claim for this case since we have an equality $F_i(M,V^{\phi})=F_i(V^{\phi})$.

Now assume $\varepsilon_{i,\widetilde{\phi}}=0$. Let $B\in\mathscr{P}_i(\Qu)$ be the $\kg$-minimal element. Then $B$ is projective (see Remark \ref{minimal}) which implies there cannot be a direct summand $C$ of $\tau^{-1}M$ such that $C\kg B$. Note further that $B$ cannot be a direct summand of $M$: Otherwise  $\left(W\backslash (\phi(R))\right)\ne\varnothing$. We set $V_0=B$ and note that
$$0=F_i(M,V_0)=\varepsilon_{i,\phi}$$
which finishes the proof.

\end{proof}

We say that $\phi\in \Hom_{\mathbb{C}\Qu}(\tau^{-1}M,M)$ is \emph{$\preceq$-minimal} if $\phi$ is combinatorial and induces a $\preceq$-minimal $\phi\in \Hom_{\mathscr{A}}(R,W)$.

\begin{lem}\label{V0} Fix $M\in \mathbb{C}\Qu-\modd$ such that $a_i^{\Qu}(M)>0$. For any $\preceq$-minimal $\phi\in\Hom_{\mathbb{C}\Qu}(\tau^{-1}M,M)$, $V^{\phi}$ is the unique $\kg$-minimal element of $\mathscr{S}_i(\Qu)$ such that $F_i(M,V)$ is maximal.
\end{lem}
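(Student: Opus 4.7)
The plan is to show $V^{\phi}$ has the two defining properties, in two steps: first, that $F_i(M,V^{\phi})=a_i^{\Qu}(M)$, so $V^{\phi}$ maximises $F_i(M,\cdot)$; second, that $V^{\phi}$ is $\kg$-minimal among the maximisers. Uniqueness of such a $\kg$-minimal maximiser is the content of the remark following Proposition \ref{constructionV}, so these two properties together identify $V^{\phi}$ with the claimed unique antichain.

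For the first step, let $\phi$ also denote the induced $\preceq$-minimal element of $\Hom_{\mathscr{A}}(R,W)$. Since $a_i^{\Qu}(M)>0$ guarantees $\varepsilon_i(\phi)>0$, Proposition \ref{f=e} applies and yields $F_i(V^{\phi})=\varepsilon_i(\phi)$. Under the identification of $V^{\phi}$ with the antichain of its $\kg$-maximal elements in $\mathscr{S}_i(\Qu)$, this reads $F_i(M,V^{\phi})=\varepsilon_i(\phi)=\varepsilon_{i,\phi}$, the last equality being Lemma \ref{combinatorial} (using that $\phi$ is combinatorial). Sandwiching via Lemma \ref{Fkleiner} (which gives $\varepsilon_{i,\phi}\ge F_i(M,V)$ for all $V$, hence $\ge a_i^{\Qu}(M)$) against the defining inequality $F_i(M,V^{\phi})\le a_i^{\Qu}(M)$ forces equality throughout, so $F_i(M,V^{\phi})=a_i^{\Qu}(M)$.

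For the second step, suppose $V'\in\mathscr{S}_i(\Qu)$ satisfies $V'\kg V^{\phi}$ and $F_i(M,V')=a_i^{\Qu}(M)=\varepsilon_i(\phi)$; I want $V'=V^{\phi}$ as antichains. Pass to downward closures in $(\mathscr{P}_M^{\infty})_0$. Since $V^{\phi}$ is $\downarrow$-closed by construction, the hypothesis $V'\kg V^{\phi}$ gives $(V')^{\downarrow}\subseteq V^{\phi}$. Conversely, feed $(V')^{\downarrow}$ into the inequality chain from the proof of Proposition \ref{f=e}: the equality $F_i(M,V')=\varepsilon_i(\phi)$ forces equality at the steps marked $(\star)$ and $(\star\star\star)$, which is exactly the pair of closure conditions (\ref{eq:stabel}) and (\ref{eq:stabel2}). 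Thus $(V')^{\downarrow}$ contains $W\setminus\phi(R)$, is $\downarrow$-closed, and is stable under the operator $\Phi$; by minimality of the closure operator $\HH_{\phi}$ we therefore get $(V')^{\downarrow}\supseteq\HH_{\phi}(W\setminus\phi(R))=V^{\phi}$. Combining both inclusions yields $(V')^{\downarrow}=V^{\phi}$, and since an antichain in $\mathscr{S}_i(\Qu)$ is recovered as the $\kg$-maximal elements of its downward closure, $V'=V^{\phi}$.

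The main obstacle is the bookkeeping between the two avatars of $V^{\phi}$: a $\downarrow$-closed subset of the graph $(\mathscr{P}_M^{\infty})_0$ on the one hand, and an antichain in $\mathscr{S}_i(\Qu)$ on the other. One has to verify that $F_i(M,V)$ agrees with the combinatorial $F_i(V^{\downarrow})$ under this identification, so that the maximisation problems in the two settings coincide, and that the $\kg$-minimal maximiser in $\mathscr{S}_i(\Qu)$ corresponds precisely to the minimal $\downarrow$-closed, $\Phi$-stable set containing $W\setminus\phi(R)$; once this dictionary is in place, both steps reduce to a careful re-run of the inequality chain of Proposition \ref{f=e} with the equality cases analysed.
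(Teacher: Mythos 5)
Your proof is essentially the paper's, built on the same two pillars: Proposition \ref{f=e} gives $F_i(M,V^{\phi})=\varepsilon_i(\phi)$, and the equality case of the inequality chain from that proof (conditions (\ref{eq:stabel}), (\ref{eq:stabel2})) combined with the closure operator $\HH_{\phi}$ handles minimality. Two small remarks on presentation. In step 2 you assume $V'\kg V^{\phi}$ and then show both inclusions, but your argument for $(V')^{\downarrow}\supseteq V^{\phi}$ never uses that hypothesis: the equality $F_i(M,V')=\varepsilon_i(\phi)$ alone forces $(V')^{\downarrow}$ to be $\Phi$-stable and contain $W\setminus\phi(R)$, hence $\HH_{\phi}(W\setminus\phi(R))\subseteq (V')^{\downarrow}$. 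So you have actually proved the stronger statement $V^{\phi}\kg V'$ for \emph{every} maximiser $V'$, i.e.\ that $V^{\phi}$ is the minimum of the set of maximisers, which is exactly what the paper does. Framed that way you would not need to invoke the remark after Proposition \ref{constructionV} at all — uniqueness falls out for free, and indeed the paper points out immediately after the lemma that this furnishes an independent proof of that remark, a bonus your formulation forfeits by citing it. The dictionary between $\downarrow$-closed subsets of $(\mathscr{P}^{\infty}_M)_0$ and antichains in $\mathscr{S}_i(\Qu)$, in particular that $F_i(V^{\downarrow})$ computed in the graph agrees with $F_i(M,V)$ computed via multiplicities, is a genuine point and you are right to flag it; the paper relies on it implicitly as well, so it is not a gap specific to your write-up.
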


\begin{proof} Let $V$ be any element in $\mathscr{S}_i(\Qu)$ such that $F_i(M,V)$ is maximal. Then $F_i(M,V)=F_i(M,V^{\phi})=\varepsilon_i(\phi)$. Now the proof is a Corollary of the arguments used in the proof of Proposition \ref{f=e}. Namely, as already deduced there, for any
$V\in \mathscr{S}_i(\Qu)$, we have
\begin{equation*}
F_i(V^{\downarrow}) = | W\cap V^{\downarrow} | - | R \cap V^{\downarrow} | \leq  | W\backslash (\phi(R) \cap V^{\downarrow}) |
\leq | W\backslash \phi(R) |
=\varepsilon_i(\phi),
\end{equation*}
where the first inequality is an equality if and only if $\phi(R\cap V^{\downarrow})\subset (W\cap V^{\downarrow})$ and the second inequality is an equality if and only if $(W\backslash \phi(R)) \subset V^{\downarrow}$.  By construction of the closure operator $\HH_{\phi}$, for any $V\in\mathscr{S}_i(\Qu)$ satisfying those properties, we have
$$(V^{\phi})^{\downarrow}\subset \HH_{\phi}(V^{\downarrow})=V^{\downarrow}.$$
Thus $V^{\phi}\kg V$.
\end{proof}

Consequently, the element $V^{\phi}\in\mathscr{S}_i(\Qu)$, defined in (\ref{eq:huellenv}), does not depend on the choice of a $\preceq$-minimal $\phi\in \Hom_{\mathscr{A}}(R,W)$. We are thus able to define for any $\preceq$-minimal $\phi\in\Hom_{\mathscr{A}}(R,W)$
$$V^M=H_\phi(W\backslash \phi(R)).$$

We remark that this is an alternative way to prove that there is a unique $V\in \mathscr{S}_i(\Qu)$ which is $\kg$-minimal such that $F_i(M,V)=a_i^{\Qu}(M)$.

We conclude this step with a Lemma that is needed in step three.

\begin{lem}\label{weissepunkte} For each indecomposable direct summand $B$ of $V^M$ there exists $\phi \in \Hom_{\mathscr{A}}(R,W)$ with $\varepsilon_i(\phi)=a_i^{\Qu}(M)$ and $v_B\in W\backslash \phi(R)$.

In particular, there exists a combinatorial $\psi\in \Hom_{\mathbb{C}\Qu}(\tau^{-1}M,M)$ with $\varepsilon_{i,\psi}=a_i^{\Qu}(M)$ such that $B$ is a direct summand of $\Coker\psi$.
\end{lem}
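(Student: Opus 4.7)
The plan is to produce $\phi$ by rerouting a fixed $\preceq$-minimal $\phi_0 \in \Hom_{\mathscr{A}}(R,W)$, for which Lemma~\ref{V0} and equation (\ref{eq:huellenv}) give $V^M = \HH_{\phi_0}(W\setminus\phi_0(R))$ and $\varepsilon_i(\phi_0) = a_i^{\Qu}(M)$. Given an indecomposable summand $B$ of $V^M$, I first observe that a $\kg$-maximal vertex of $V^M$ cannot have been added as a strict ancestor by the closure operator (it would then have a strict $\kg$-successor in $V^M$); combined with $v_\infty \notin V^M$ from Proposition~\ref{f=e}, this forces some white copy $v_B^\star \in V^M \cap W$ of $B$. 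If $v_B^\star \in W\setminus\phi_0(R)$ I set $\phi := \phi_0$ and am done, so assume $v_B^\star = \phi_0(r_1)$ for some $r_1 \in R$.

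Next I build a finite chain linking $v_B^\star$ to a free white vertex, by descending through the strata $V_j := \Phi^j(W\setminus\phi_0(R))$. Letting $k$ be minimal with $v_B^\star \in V_k$, the $\kg$-maximality of $v_B^\star$ again forbids its being a strict ancestor, so $v_B^\star \in \phi_0(R \cap V_{k-1})$ and we may take $r_1 \in R \cap V_{k-1}$. Iteratively: at stage $j$, if $r_j \in V_0 = (W\setminus\phi_0(R))^{\downarrow}$ choose $w \in W\setminus\phi_0(R)$ with $r_j \kg w$ and terminate with $s := j$; otherwise $r_j$ first enters a stratum $V_{k_j}$ with $k_j \ge 1$, and the definition of $\Phi$ allows one to pick $r_{j+1} \in R \cap V_{k_j-1}$ with either $r_j = \phi_0(r_{j+1})$ or $r_j \kg \phi_0(r_{j+1})$. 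Since the stratum index strictly decreases, the process terminates; moreover $\preceq$-minimality together with Proposition~\ref{f=e} gives $v_\infty \notin V^M$, so every $\phi_0(r_{j+1})$ arising in the chain lies in $W$.

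Now define $\phi$ by $\phi(r_j) := \phi_0(r_{j+1})$ for $1 \le j < s$, $\phi(r_s) := w$, and $\phi := \phi_0$ on $R \setminus \{r_1,\ldots,r_s\}$. The axioms of $\Hom_{\mathscr{A}}(R,W)$ are immediate from the chain: $r_j \le \phi(r_j)$ holds by construction, and injectivity holds because $\phi(R) = (\phi_0(R)\setminus\{v_B^\star\}) \cup \{w\}$ with $w \notin \phi_0(R)$. The same identity gives $\varepsilon_i(\phi) = \varepsilon_i(\phi_0) = a_i^{\Qu}(M)$, and $v_B^\star \in W\setminus\phi(R)$ by construction.

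For the second assertion, lift $\phi$ to a combinatorial $\psi \in \Hom_{\mathbb{C}\Qu}(\tau^{-1}M,M)$ exactly as in the proof of Proposition~\ref{3}: for each $r \in R$ with $\phi(r) \ne v_\infty$, take the restriction of $\psi$ to the indecomposable summand of $\tau^{-1}M$ associated to $r$ to be a composition of irreducible morphisms, supplied by Proposition~\ref{partialorder}, inducing the required isomorphism on $\Hom(-,S(i))$. Lemma~\ref{combinatorial} then gives $\varepsilon_{i,\psi} = |W| - |\phi(R)\cap W| = \varepsilon_i(\phi) = a_i^{\Qu}(M)$. Since $v_B^\star$ represents a direct summand $B$ of $M^{\trianglelefteq S(i)}$ not hit by $\psi$, the decomposition $M = B \oplus N$ descends to $\Coker\psi = B \oplus (N/\im\psi)$, exhibiting $B$ as a direct summand of $\Coker\psi$. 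I expect the tracing step to be the main obstacle: one must carefully unpack the iterative construction of $\HH_{\phi_0}$ and use $\preceq$-minimality to ensure that no $\phi_0$-image in the chain is $v_\infty$; once the chain is in hand, defining $\phi$ and performing the lift to $\psi$ are routine bookkeeping.
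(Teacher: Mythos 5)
Your proposal is correct and takes essentially the same route as the paper: pick a $\preceq$-minimal $\phi_0$, observe that a $\kg$-maximal vertex of $V^{\phi_0}=\HH_{\phi_0}(W\setminus\phi_0(R))$ must lie in $W$ (since $v_\infty\notin V^{\phi_0}$), trace the closure operator backwards along $R$-vertices to a free white vertex $w\in W\setminus\phi_0(R)$, and shift the chain by one to obtain the rerouted $\phi$; then lift to a combinatorial $\psi$ as in the proof of Proposition \ref{3}. Where you differ is only in level of detail: you make explicit why the maximal vertex is white (the paper simply asserts $v_B$ is $\kg$-maximal in $\HH(W\setminus\phi(R))$ and proceeds), and you make the back-tracing rigorous via the strictly decreasing stratum index $k_j$, whereas the paper writes down the chain conditions directly (with a small indexing slip in ``$r_{k-2}\kg\phi(r_{k-1})$'' and in asking only for $w$ to be $\kg$-maximal in $\HH_\phi(W\setminus\phi(R))$ rather than explicitly $w\in W\setminus\phi_0(R)$, which is what the cardinality count actually needs and what your version provides). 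These are elaborations of the same argument rather than a different approach.
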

\begin{proof} Let $B$ be a direct summand of $V^M$ and $\phi\in\Hom_{\mathscr{A}}(R,W)$ be any $\preceq$-minimal morphism. By construction we have that $v_B$ is a $\kg$-maximal element of $H(W\backslash \phi(R))$. If $v_B\in W\backslash\phi(R)$ we are done. Otherwise, by the definition of the operator $\HH_{\phi}$ there exists $r_1,r_2, \ldots, r_j\in R$ and a $\kg$-maximal element $w \in \HH_{\phi}(W\backslash \phi(R))$ such that $\phi(r_1)=v_B$, $r_{k-2}\kg \phi(r_{k-1})$ for all $3 \le k \le j$ and $r_j \kg w$.
We define $\phi_1\in\Hom_{\mathscr{A}}(R,W)$ by
\begin{align*}
\phi_1 |_{R\backslash \{r_1,r_2,\ldots r_{j}\}}&=\phi|_{R\backslash \{r_1,r_2,\ldots r_{j}\}},\\
\phi_1(r_k)&=\phi(r_{k+1}) \text{ for all }1 \le k \le j-1\\
\phi_1(v_j)&=w.
\end{align*}
Thus $v_B \in W\backslash \phi_1(R)$ and $$\varepsilon_i(\phi_1)=\varepsilon_i(\phi)=a_i^{\Qu}(M).$$
Analog to the proof of Proposition \ref{3} we choose a corresponding combinatorial homomorphism $\psi\in \Hom_{\mathbb{C}\Qu}(\tau^{-1}M,M)$ which yields the last statement of the Lemma.
\end{proof}

\subsection{EQUIVARIANCE WITH RESPECT TO THE KASHIWARA OPERATORS}

We fix $i\in I$ and $M\in \mathbb{C}\Qu-\modd$ with $a_i^{\Qu}(M)>0$. Recall from Definition \ref{operatorm} the operator
$$\mi M:=N\oplus \bigoplus_{B\in l(V_0)}\tau B,$$
where $M=N\bigoplus V_0$ with $V_0$ the $\kg$-minimal element of $\mathscr{S}_i(\Qu)$ such that $F_i(M,V)$ is maximal and $l(V_0)=\{B\in \mathscr{P}_i(\Qu) \mid B \ntrianglelefteq V_0 \text{ minimally}\}$.

Recall further from Proposition \ref{homologicalkashiwara} that $\tilde{e}_i b_{[M]}=b_{[\mi M]}$ for $b_{[M]}\in B^{\mathscr{H}}(\infty)$. In this section we show that bijection $\mathscr{F}$ given in (\ref{eq:iso}) preserves the Kashiwara operator $\tilde{e}_i$ and conclude that $\mathscr{F}$ is an isomorphism of crystals.

By Lemma \ref{V0}, we have $V_0\cong V^{M}$. We further write,
$$U^{M}:=\bigoplus_{B\in l(V_0)}\tau B.$$
For a direct summand $N$ of $M$, we denote by $\pi_N:M\twoheadrightarrow N$ the canonical projection and by $\iota_N: N\hookrightarrow M$ the canonical inclusion. We introduce the following notion.

\begin{defi}\label{descends} A homomorphism $\phi\in \Hom_{\mathbb{C}\Qu}(\tau^{-1}M,M)$ \emph{descends via $V^M$} if and only if there exists a short exact sequence
$$ 0 \rightarrow \mi M \xrightarrow{\iota} M \xrightarrow{f} S(i) \rightarrow 0$$
such $f\circ \phi =0$ and $\pi_N\circ\iota|_N=\id_N$.
\end{defi}
\begin{rem} The condition $\pi_N\circ\iota|_N=\id_N$ in Definition \ref{descends} is of technical nature. It is used to simplify the induction step in the proof of Proposition \ref{dichtemenge}.
\end{rem}
We decompose $N=N^{+}\oplus N^{-}$, where $N^{-}=N^{\kg V^M}$ (compare with (\ref{eq:abk1})) and let
$$V^M=\bigoplus_{k\in \mathscr{V}} V_k$$
be a decomposition of $V^M$ into indecomposable direct summands with $\mathscr{V}$ the corresponding index set. For $j\in \mathscr{V}$, we write $V^M=V_j^{\perp}\bigoplus V_j$. We abbreviate further (see again (\ref{eq:abk1}))
\begin{align*}
M^{\kg}&:=M^{\kg V^M}, &
M^{\ntrianglelefteq}&:=M^{\ntrianglelefteq V^M},\\
(\tau^{-1}M)^{\kg}&:=(\tau^{-1}M)^{\kg V^M}, &
(\tau^{-1}M)^{\ntrianglelefteq}&:=(\tau^{-1}M)^{\ntrianglelefteq V^M}.
\end{align*}
Note that $M^{\kg}=N^{-}\oplus V^M$.

\begin{lem}\label{krit} Let $\phi \in \Hom_{\mathbb{C}\Qu}(\tau^{-1}M,M)$. Assume that there exists $0\ne f\in \Hom_{\mathbb{C}\Qu}(M,S(i))$ such that
\begin{equation*}
\begin{matrix}
f\circ \phi =0, &
f|_{N^+}=0 & \text{ and } &
f|_{V_k}\ne 0 \text{ for all }k\in \mathscr{V}.
\end{matrix}
\end{equation*}
Then $\phi$ descends via $V^{M}.$
\end{lem}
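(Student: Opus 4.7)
The plan is to take the given $f$ itself as the right-hand map in the desired short exact sequence, show that $\ker f\cong \mi M$, and then realize the inclusion compatibly with the splitting $M=N\oplus V^M$. First I would verify surjectivity: since $a_i^{\Qu}(M)>0$, by Lemma \ref{V0} the antichain $V^M$ is non-trivial and thus $\mathscr{V}$ is non-empty, so the assumption $f|_{V_k}\ne 0$ forces $f\ne 0$; as $S(i)$ is simple, $f$ surjects. Set $K:=\ker f$, yielding a short exact sequence $0\to K\to M\xrightarrow{f}S(i)\to 0$, and note that the required relation $f\circ\phi=0$ is already built into the hypothesis.

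The next step is to identify $K$ with $\mi M$ via Theorem \ref{U}. Applied to the above sequence (viewing $M$ as the middle term over the left-hand module $K$), there is a unique antichain $V\in\mathscr{S}_i(\Qu)$ with $M\cong N'\oplus V$ and $K\cong N'\oplus \bigoplus_{B\in l(V)}\tau B$. The conditions on $f$ are precisely tailored to force $V\cong V^M$: the vanishing $f|_{N^+}=0$ places every indecomposable summand of $N^+$ inside $K$ as a direct summand, while the non-vanishing $f|_{V_k}\ne 0$ on each $V_k$ prevents any $V_k$ from being absorbed into the kernel part $\bigoplus_{B\in l(V)}\tau B$, so each $V_k$ must occur as a summand of $V$. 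Combining these constraints with Krull--Schmidt applied to the two decompositions $M\cong N\oplus V^M\cong N'\oplus V$ and the antichain property of both $V$ and $V^M$ in $\mathscr{P}_i(\Qu)$, one concludes $V\cong V^M$, hence $N'\cong N$ and $K\cong N\oplus U^M=\mi M$.

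Finally, the inclusion $\iota\colon \mi M\hookrightarrow M$ is the composition of an isomorphism $\mi M\xrightarrow{\sim} K$ with the inclusion $K\hookrightarrow M$. To arrange the normalization $\pi_N\circ\iota|_N=\id_N$, I would exploit the freedom in choosing this isomorphism. Since $N$ is a direct summand of both $\mi M$ and $M$ and $N^+\subset K$ sits canonically inside $M$, one can pick the $N$-summand of $K$ as the graph of a map $N\to V^M$; its projection back to $N$ is then an isomorphism, which can be absorbed by pre-composition with an element of $\operatorname{Aut}(\mi M)$ acting on the $N$-component to make the projection the identity.

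The principal obstacle will be the identification $V\cong V^M$ in the middle step. The bijection in Theorem \ref{U} is formulated at the level of isomorphism classes, so transferring the module-theoretic conditions on $f$ into an antichain identification requires careful Krull--Schmidt bookkeeping and ultimately invoking the characterization of $V^M$ as the unique $\kg$-minimal antichain maximizing $F_i(M,-)$ supplied by Lemma \ref{V0}. Particular care is needed for the summands of $N^-$, on which no information about $f$ is assumed: one must argue that they contribute to $N'$ rather than to $V$, using that $N^-=N^{\kg V^M}$ lies in the $\kg V^M$ part of $M$ together with the maximality built into the definition of $V^M$.
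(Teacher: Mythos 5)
Your overall strategy is genuinely different from the paper's: you try to identify $\ker f$ up to isomorphism with $\mi M$ via Theorem~\ref{U} and Krull--Schmidt, whereas the paper explicitly constructs the inclusion $\iota\colon\mi M\hookrightarrow M$ piece by piece. The conclusion $\ker f\cong\mi M$ is indeed true, but your route to it has a genuine gap which you partially flag yourself but whose proposed fix does not close it.

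The gap is in the step ``each $V_k$ must occur as a summand of $V$.'' You are working with two unrelated decompositions of $M$: the fixed decomposition $M=N^+\oplus N^-\oplus V^M$, with respect to which the hypotheses $f|_{N^+}=0$ and $f|_{V_k}\neq 0$ are phrased, and the decomposition $M\cong N'\oplus V$ furnished by Theorem~\ref{U} applied with $\ker f$ as the left-hand term. The theorem's bijection is a statement about isomorphism classes: it does not produce a decomposition of the given $M$ in which $f$ vanishes on $N'$, nor does it match the given $V_k$'s with summands of $V$. Krull--Schmidt only identifies the multisets of indecomposable summands; it does not intertwine the two decompositions with $f$, so the condition ``$f|_{V_k}\neq 0$'' says nothing directly about whether the isomorphism class of $V_k$ lands in $V$ or in $N'$. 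The same problem undermines your closing remark about the summands of $N^-$: to argue that they contribute to $N'$ rather than $V$, you want to use that the $V_k$'s already occupy $V$, which is the unproven step, so the argument is circular.

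What is actually needed, and what the paper supplies, is Proposition~\ref{partialorder}: for each summand $B_k$ of $N^-$ there is a $j$ with $B_k\kg V_j$, and then one can choose $\psi^k_j\in\Hom_{\mathbb{C}\Qu}(B_k,V_j)$ and $\lambda$ with $f|_{B_k}=\lambda\, f|_{V_j}\psi^k_j$ (this uses the special hypothesis, which makes $\Hom(B_k,S(i))$ one-dimensional). This factorisation is exactly the lever that lets one ``slide'' $N^-$ into the kernel: one embeds $B_k$ as the graph $\iota_{B_k}-\lambda\psi^k_j$, which is an explicit automorphism-style correction of the obvious inclusion. This construction simultaneously handles the identification $\ker f\cong\mi M$, gives the exactness of the resulting sequence, and provides the normalisation $\pi_N\circ\iota|_N=\id_N$, which in your proposal is also hand-waved (``its projection back to $N$ is then an isomorphism'' needs precisely this graph description to be justified). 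So while your plan correctly isolates where the difficulty lies, the Krull--Schmidt bookkeeping plus Lemma~\ref{V0} is not enough; you must bring in Proposition~\ref{partialorder} to control $f$ on $N^-$, at which point you essentially recover the paper's explicit construction.
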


\begin{proof}
By Theorem \ref{U}, we have a short exact sequence
\begin{equation*}
\xymatrix{
0 \ar[r] & U^M \ar[r]^{\iota_0}  & V^M \ar[r]^{f|_{V^{M}}} & S(i) \ar[r] &0 }
\end{equation*}

Let $N^-=\bigoplus_{k\in\mathscr{N}}B_k$ be a decomposition into indecomposable direct summands. For each $k\in \mathscr{N}$ there exists, by definition of $N^-$, a $j\in \mathscr{V}$ such that $B_k\kg V_j$. Thus, by Proposition \ref{partialorder}, there exists $\psi^k_{j}\in\Hom_{\mathbb{C}\Qu}(B_k,V_j)$ and $\lambda \in \mathbb{C}$  such that
$$f|_{B_k}=\lambda f|_{V_j}\psi^k_{j}.$$

Let $\iota:\mi M \rightarrow M$ be the following homomorphism:
\begin{equation*}
\begin{matrix}
\iota|_{U^{M}}=\iota_0, &
\iota|_{N}=\iota_{N}, &
\iota|_{B_k}=\iota_{B_k}-\lambda\psi^k_{j}
\end{matrix}
\end{equation*}
for each $k\in\mathscr{N}$.

This yields the exact sequence
$$
\xymatrix{
0 \ar@{->}[r] & \mi M \ar@{->}[r]^{\iota} & M \ar@{->}[r]^{f} & S(i) \ar[r] & 0
}
$$
with $\pi_N\circ\iota|_N=\id_N$. Furthermore, by assumption,  v we have $f\circ \phi =0.$
\end{proof}

In what follows we abbreviate
$$h_{S(i)}(-):=\Hom_{\mathbb{C}\Qu}(-,S(i)).$$
For $M_1,M_2 \in \mathbb{C}\Qu-\modd$, the functor $h_{S(i)}(-)$ yields the linear map
\begin{align*} h_{S(i)}: \Hom_{\mathbb{C}\Qu}(M_1,M_2) & \rightarrow \Hom\left(h_{S(i)}(M_2),h_{S(i)}(M_1)\right)\\
\phi & \mapsto h_{S(i)}(\phi).
\end{align*}

We obtain the following as a reformulation of Lemma \ref{krit}.
\begin{cor}\label{krite} Let $\phi \in \Hom_{\mathbb{C}\Qu}(\tau^{-1}M,M)$ and assume that
$$\Ker \left( h_{S(i)}(\phi)\right) \bigcap h_{S(i)}(M^{\kg})\backslash \displaystyle\bigcup_{j\in\mathscr{V}} \left(\Ker \left( h_{S(i)}(\phi)\right)\bigcap h_{S(i)}\left(V_j^{\perp}\oplus N^{-}\right)\right)\ne\varnothing.$$
Then $\phi$ descends via $V^{M}$.
\end{cor}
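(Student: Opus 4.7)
The plan is to recognise this Corollary as a direct reformulation of Lemma~\ref{krit} in the language of the functor $h_{S(i)} = \Hom_{\mathbb{C}\Qu}(-,S(i))$. The task therefore reduces to translating the set-theoretic hypothesis into the existence of a morphism $f : M \to S(i)$ satisfying the three conditions of Lemma~\ref{krit}, after which that lemma applies directly.

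First I would unpack the kernel. Since $h_{S(i)}(\phi)$ sends $f \in \Hom_{\mathbb{C}\Qu}(M,S(i))$ to $f \circ \phi$, we have $\Ker(h_{S(i)}(\phi)) = \{ f : M \to S(i) \mid f \circ \phi = 0\}$. Next I would use the decomposition $M = N^{+} \oplus N^{-} \oplus V^{M}$, with $M^{\kg} = N^{-} \oplus V^{M}$ and $M^{\ntrianglelefteq} = N^{+}$, together with the splitting $V^{M} = V_{j} \oplus V_{j}^{\perp}$ for each $j \in \mathscr{V}$. The inclusion $h_{S(i)}(M^{\kg}) \hookrightarrow h_{S(i)}(M)$ induced by the canonical projection $M \twoheadrightarrow M^{\kg}$ identifies $h_{S(i)}(M^{\kg})$ with the subspace $\{ f \mid f|_{N^{+}} = 0\}$, and analogously $h_{S(i)}(V_{j}^{\perp} \oplus N^{-})$ is identified with $\{ f \mid f|_{V_{j}} = 0 \text{ and } f|_{N^{+}} = 0\}$.

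Combining these identifications, an element of the set in the hypothesis is exactly a morphism $f : M \to S(i)$ with $f \circ \phi = 0$, $f|_{N^{+}} = 0$, and $f|_{V_{j}} \neq 0$ for every $j \in \mathscr{V}$. Such an $f$ is automatically non-zero because $V^{M} \neq 0$ under the running assumption $a_{i}^{\Qu}(M) > 0$, so the hypothesis of Lemma~\ref{krit} is met and the desired descent of $\phi$ via $V^{M}$ follows. I do not foresee any serious obstacle: the argument is pure bookkeeping, and the only point requiring care is the routine passage between ``$f$ lies in $h_{S(i)}(U)$ for a direct summand $U \subseteq M$'' and ``$f$ vanishes on the complementary summand''.
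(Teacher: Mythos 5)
Your proposal is correct and matches the paper's intent exactly: the paper states Corollary~\ref{krite} as an immediate reformulation of Lemma~\ref{krit} without giving a separate argument, and your unpacking of $\Ker(h_{S(i)}(\phi))$, $h_{S(i)}(M^{\kg})$, and $h_{S(i)}(V_j^{\perp}\oplus N^{-})$ into the three vanishing/nonvanishing conditions of Lemma~\ref{krit}, together with the observation that $\mathscr{V}\neq\varnothing$ forces $f\neq 0$, is precisely the translation being invoked.
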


For $k\in \mathscr{V}$ and $\phi\in\Hom_{\mathbb{C}\Qu}(\tau^{-1}M,M)$, we denote by
$$\pi_{k}:M\twoheadrightarrow \displaystyle V_k^{\perp} \oplus N^{-}$$
the canonical projection. Recall that for $B\in\mathbb{C}\Qu-\modd$, we have $\ell_i(B)=\dim\Hom_{\mathbb{C}\Qu}(B,S(i))$. We define for $k\in \mathscr{V}$
$$\nu_k:=\min_{\phi}\ell_i(\Coker \left(\pi_k \circ \phi\right)).$$

We further define
$$\nu^{-}:=\min_{\phi} \ell_i(\Coker\left( \pi_{M^{\kg}} \circ \phi)\right).$$

\begin{lem}\label{minimum} We have
\begin{align*}
\nu^{-}&= a_i^{\Qu}(M), \text{ and }\\
\nu_k& \leq a_i^{\Qu}(M)-1
\end{align*}
for each $k\in \mathscr{V}$.
\end{lem}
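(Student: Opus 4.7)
The plan is to combine Proposition \ref{3} with the inner estimate from the proof of Lemma \ref{Fkleiner} and the combinatorial witness produced by Lemma \ref{weissepunkte}. The key ingredient from Proposition \ref{3}, read jointly with the corollary following (\ref{eq:dimhom}), is the identity
\[
a_i^{\Qu}(M) \;=\; \min_{\phi} \ell_i(\Coker \phi) \;=\; \min_{\phi} \varepsilon_{i,\phi},
\]
where $\phi$ ranges over $\Hom_{\mathbb{C}\Qu}(\tau^{-1}M, M)$.

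For $\nu^{-} = a_i^{\Qu}(M)$, I would prove the two inequalities separately. The upper bound $\nu^{-} \leq a_i^{\Qu}(M)$ follows because the canonical surjection $\pi_{M^{\kg}}\colon M \twoheadrightarrow M^{\kg}$ sends $\im \phi$ onto $\pi_{M^{\kg}}(\im \phi)$ and therefore induces a surjection $\Coker \phi \twoheadrightarrow \Coker(\pi_{M^{\kg}} \circ \phi)$; applying $\Hom_{\mathbb{C}\Qu}(-, S(i))$ gives $\ell_i(\Coker(\pi_{M^{\kg}} \circ \phi)) \leq \varepsilon_{i,\phi}$ for every $\phi$, and minimizing closes the estimate. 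For the lower bound $\nu^{-} \geq a_i^{\Qu}(M)$, I would recycle the inner chain of inequalities from the proof of Lemma \ref{Fkleiner} specialized to $V = V^{M}$: recalling $M^{\trianglelefteq V^M} = M^{\kg}$ and $\phi_{\kg V^M} = \pi_{M^{\kg}} \circ \phi$, we obtain for every $\phi$
\[
\ell_i(\Coker(\pi_{M^{\kg}} \circ \phi)) \;=\; \ell_i\!\left(M^{\trianglelefteq V^M}/\im \phi_{\kg V^M}\right) \;\geq\; F_i(M, V^M) \;=\; a_i^{\Qu}(M),
\]
and I then take the minimum over $\phi$.

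For $\nu_k \leq a_i^{\Qu}(M) - 1$ the strategy is to construct an explicit witness $\psi$ via Lemma \ref{weissepunkte} applied to the indecomposable summand $V_k$ of $V^M$. That lemma produces a combinatorial $\psi \in \Hom_{\mathbb{C}\Qu}(\tau^{-1}M, M)$ with $\varepsilon_{i,\psi} = a_i^{\Qu}(M)$ and $V_k$ a direct summand of $\Coker \psi$. Going one step deeper into its proof: since the distinguished vertex $v_{V_k}$ lies in $W \setminus \phi_1(R)$, no red point is mapped into $V_k$, so the combinatorial $\psi$ has $\im \psi$ contained in the direct complement of $V_k$ inside $M^{\kg S(i)}$ and in particular $\im \psi \cap V_k = 0$. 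Hence $V_k \subset M$ embeds into $\Coker \psi$ as a genuine direct summand arising from $M$ itself. Since $V_k \subset \ker \pi_k$, the induced surjection $\Coker \psi \twoheadrightarrow \Coker(\pi_k \circ \psi)$ annihilates this copy of $V_k$, and $\ell_i(V_k) = 1$ because $\Qu$ is special; chaining the $\Hom(-, S(i))$ estimates then yields
\[
\nu_k \;\leq\; \ell_i(\Coker(\pi_k \circ \psi)) \;\leq\; \ell_i(\Coker \psi) - \ell_i(V_k) \;=\; a_i^{\Qu}(M) - 1.
\]

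The main obstacle I anticipate lies in this last step: one must identify the abstract $V_k$-summand of $\Coker \psi$ furnished by Lemma \ref{weissepunkte} with the concrete image of the summand $V_k \subset M$ that is annihilated by $\pi_k$. This is not literally the statement of Lemma \ref{weissepunkte} and requires unpacking the combinatorial construction in its proof, but once the disjointness $\im \psi \cap V_k = 0$ is extracted, the remaining argument is a clean diagram chase.
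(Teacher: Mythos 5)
Your proposal is correct, and for the second half (the bound $\nu_k\le a_i^{\Qu}(M)-1$) it follows essentially the same route as the paper, namely invoking Lemma \ref{weissepunkte} to produce a witness $\psi$ and reading off the estimate via the projection $\pi_k$. The subtlety you flag --- that the $V_k$-summand of $\Coker\psi$ produced by Lemma \ref{weissepunkte} must be identified with the summand of $V^M$ killed by $\pi_k$ --- is real, but it is resolved by choosing the indecomposable decompositions of $M$ compatibly (a copy of $V_k$ cannot lie in $N^{+}$ since $V_k\kg V^M$, and any two copies in $N^{-}\oplus V^M$ are interchangeable by an automorphism, so one may arrange that the copy missed by $\psi$ is the $V^M$-copy); the paper's one-line assertion tacitly makes the same choice.

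For the first half ($\nu^{-}=a_i^{\Qu}(M)$), however, you take a genuinely different and somewhat more elementary route. The paper proves the lower bound by fixing a $\preceq$-minimal combinatorial $\phi_0$, extending an arbitrary $\phi\in\Hom_{\mathbb{C}\Qu}(\tau^{-1}M,M^{\kg})$ to $\widetilde\phi=\phi+\phi_0|_{(\tau^{-1}M)^{\ntrianglelefteq}}$, and comparing $\Coker\phi$ with $\Coker\widetilde\phi$ via a commutative diagram using $\ell_i(\Coker\pi_{M^{\ntrianglelefteq}}\circ\phi_0)=0$; it proves the upper bound by evaluating $\ell_i(\Coker\pi_{M^{\kg}}\circ\phi_0)$ directly via Proposition \ref{f=e}. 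You instead recycle the inner estimate from the proof of Lemma \ref{Fkleiner} specialized to $V=V^M$, which gives the lower bound $\ell_i(\Coker(\pi_{M^{\kg}}\circ\phi))\ge F_i(M,V^M)=a_i^{\Qu}(M)$ for \emph{every} $\phi$, with no need of a distinguished combinatorial $\phi_0$, and get the upper bound from the obvious surjection $\Coker\phi\twoheadrightarrow\Coker(\pi_{M^{\kg}}\circ\phi)$ together with $\min_\phi\varepsilon_{i,\phi}=a_i^{\Qu}(M)$. Your version avoids both the auxiliary commutative diagram and the explicit use of $\preceq$-minimality, at the cost of leaning on $F_i(M,V^M)=a_i^{\Qu}(M)$ (which is just the definition of $a_i^{\Qu}$ together with Lemma \ref{V0}); this is a clean and legitimate simplification.
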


\begin{proof}
Let $\phi_0\in \Hom_{\mathbb{C}\Qu}(\tau^{-1}M,M)$ be $\preceq$-minimal. By Proposition \ref{f=e} we have $\ell_i(\Coker \pi_{M^{\kg}} \circ \phi_0)=a_i^{\Qu}(M)$ implying $\nu^{-}\le a_i^{\Qu}(M).$ Let $\phi\in\Hom_{\mathbb{C}\Qu}(\tau^{-1}M,M^\kg)$ be arbitrary. Setting $\widetilde{\phi} := \phi + \phi_0|_{(\tau^{-1}M)^{\ntrianglelefteq}}$ we obtain the following commutative diagram with exact columns and rows:
\begin{equation*}
\xymatrix{0 \ar[d] & & 0 \ar[d] & 0 \ar[d] & \\
(\tau^{-1}M)^{\ntrianglelefteq}\ar@{->}[rr]^{\pi_{M^{\ntrianglelefteq}}\circ \phi_0}\ar[d] & & M^{\ntrianglelefteq } \ar@{->}[r] \ar@{->}[d] &
\Coker\pi_{M^{\ntrianglelefteq}}\circ \phi_0  \ar[d] \ar[r] & 0\\
\tau^{-1}M \ar[d]  \ar@{->}[rr]^{\widetilde{\phi}} & & M \ar@{->}[r]  \ar@{->}[d]& \Coker\tilde{\phi} \ar@{->}[d] \ar[r] & 0\\
(\tau^{-1}M)^{\kg} \ar[d]\ar@{->}[rr]^{\phi} && M^{\kg} \ar[d] \ar@{->}[r]& \Coker \phi \ar[d] \ar[r]& 0 \\
0 & & 0 & 0
}
\end{equation*}
By Proposition \ref{f=e} $\ell_i (\Coker\pi_{M^{\ntrianglelefteq}}\circ \phi_0)=0$ which implies that the map
$$h_{S(i)}( \Coker \phi) \rightarrow h_{S(i)}\left(\Coker \widetilde\phi\right)$$
is an isomorphism. Hence $\ell_i (\Coker \phi)\ge a_i^{\Qu}(M).$

By Lemma \ref{weissepunkte} there exists $\psi_0 \in \Hom_{\mathbb{C}\Qu}(\tau^{-1}M,M)$
with $$\ell_i (\Coker \pi_k \circ \psi_0)=a_i^{\Qu}(M)-1.$$
We thus obtain $\nu_k\le a_i^{\Qu}(M)-1.$
\end{proof}

We define the following dense open subset of $\Hom_{\mathbb{C}\Qu}(\tau^{-1}M,M)$:
$$\mathscr{O}_M:=\{\phi\in\Hom_{\mathbb{C}\Qu}(\tau^{-1}M,M) \mid \ell_i (\Coker\left( \pi_k \circ \phi\right))=\nu_k\}.$$

\begin{lem}\label{absteigen} For each $\phi\in\mathscr{O}_M$, we have
$$\left( \Ker \left( h_{S(i)}(\phi)\right) \cap h_{S(i)}(M^{\kg})\right)\backslash \displaystyle\bigcup_{j\in\mathscr{V}} \left(\Ker \left( h_{S(i)}(\phi)\right)\cap h_{S(i)}(V_j^{\perp}\oplus N^{-}\right)\ne\varnothing.$$
In particular, $\phi$ descends via $V^M$.
\end{lem}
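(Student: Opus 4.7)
The plan is to recast the nonemptiness claim as the statement that a $\mathbb{C}$-vector space is not covered by a finite family of proper subspaces, and then to invoke Corollary \ref{krite}. First I would make the reformulation explicit: an element of $\Ker h_{S(i)}(\phi)\cap h_{S(i)}(M^{\kg})$ is a morphism $f\colon M\to S(i)$ with $f\circ\phi=0$ that vanishes on $M^{\ntrianglelefteq}$. A short bookkeeping argument, using $M=N^{+}\oplus N^{-}\oplus V^{M}$, the definition $N^{-}=N^{\kg V^{M}}$, and the fact that each indecomposable summand $V_{k}$ of $V^{M}$ satisfies $V_{k}\kg V^{M}$, shows $M^{\kg}=N^{-}\oplus V^{M}$ and hence $M^{\ntrianglelefteq}=N^{+}$. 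Subtracting the pieces indexed by $j\in\mathscr{V}$ adds the extra condition $f|_{V_{j}}\ne 0$ for every $j$, so producing an element of the set difference is exactly producing an $f$ as required by Lemma \ref{krit}.

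Next I would switch to cokernels. Setting $\phi^{\kg}:=\pi_{M^{\kg}}\circ\phi$ and $\phi^{k}:=\pi_{k}\circ\phi$, the adjunction $\Hom(\Coker(-),S(i))$ yields
\begin{align*}
\Ker h_{S(i)}(\phi)\cap h_{S(i)}(M^{\kg}) &\cong \Hom_{\mathbb{C}\Qu}(\Coker\phi^{\kg},S(i)),\\
\Ker h_{S(i)}(\phi)\cap h_{S(i)}(V_{k}^{\perp}\oplus N^{-}) &\cong \Hom_{\mathbb{C}\Qu}(\Coker\phi^{k},S(i)),
\end{align*}
and since $\pi_{k}$ factors as $M\xrightarrow{\pi_{M^{\kg}}}M^{\kg}\twoheadrightarrow V_{k}^{\perp}\oplus N^{-}$, the right-hand side of the second line embeds naturally into that of the first by precomposition with this latter projection.

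Lemma \ref{minimum} then supplies the key numerical input: unconditionally on $\phi$ one has $\dim\Hom(\Coker\phi^{\kg},S(i))=\ell_{i}(\Coker\phi^{\kg})\ge\nu^{-}=a_{i}^{\Qu}(M)$, while $\phi\in\mathscr{O}_{M}$ forces $\dim\Hom(\Coker\phi^{k},S(i))=\nu_{k}\le a_{i}^{\Qu}(M)-1$ for every $k\in\mathscr{V}$. Thus each embedded subspace $\Hom(\Coker\phi^{k},S(i))$ is a \emph{proper} subspace of $\Hom(\Coker\phi^{\kg},S(i))$. Since $\mathbb{C}$ is infinite and $\mathscr{V}$ is finite, no $\mathbb{C}$-vector space can be written as a union of finitely many proper subspaces, so the set difference is nonempty; any $f$ in it satisfies $f|_{N^{+}}=0$, $f\circ\phi=0$, and $f|_{V_{k}}\ne 0$ for all $k\in\mathscr{V}$, and Corollary \ref{krite} then delivers the ``in particular'' clause. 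The only mildly delicate step is the bookkeeping identification $M^{\kg}=N^{-}\oplus V^{M}$; once that is in place the argument is the standard subspace-avoidance lemma over an infinite field together with the dimension comparison from Lemma \ref{minimum}.
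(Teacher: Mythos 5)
Your proof is correct and follows essentially the same route as the paper: reformulate the two sides as $\Hom$-spaces of the cokernels $\Coker(\pi_{M^{\kg}}\circ\phi)$ and $\Coker(\pi_k\circ\phi)$, use Lemma \ref{minimum} and the definition of $\mathscr{O}_M$ to get the dimension gap, conclude by the standard fact that a $\mathbb{C}$-vector space is not a finite union of proper subspaces, and finish with Corollary \ref{krite}. Your write-up is just more explicit than the paper's about the identification $M^{\kg}=N^-\oplus V^M$ and about the subspace-avoidance step.
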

\begin{proof}
The claim follows from Lemma \ref{minimum}, noting that $$\left( \Ker \left( h_{S(i)}(\phi)\right)\cap h_{S(i)}\left(M^{\kg}\right)\right)\cong h_{S(i)}(\Coker \pi_{M^{\kg}}\circ \phi)$$ is an affine variety of dimension $a_i^{\Qu}(M)$ and $$\left( \Ker\left( h_{S(i)}(\phi)\right)\cap h_{S(i)}\left(V_j^{\perp}\oplus N^{-}\right)\right)\cong h_{S(i)}(\Coker \pi_k \circ \phi)$$ is an affine variety of dimension at most $a_i^{\Qu}(M)-1$ for all $k\in \mathscr{V}$.

By Corollary \ref{krite}, we conclude that $\phi$ descends via $V^{M}$.
\end{proof}

Let $\varepsilon_i(\overline{\mathcal{C}_{[M]}})=c>0$. We say that $\phi \in \Hom_{\mathbb{C}\Qu}(\tau^{-1}M,M)$ is \emph{compatible with $\mi ^c M$} if there exists a short exact sequence
$$ 0 \rightarrow \mi^c M \xrightarrow{\iota} M \xrightarrow{f} S(i)^{c} \rightarrow 0$$
and $\psi \in \Hom_{\mathbb{C}\Qu}(\tau^{-1}\left(\mi^c M\right), \mi^{c}M)$ such that the following diagram commutes
$$
\xymatrix{ 0 \ar[r] & \mi^c M \ar[r]^{\iota} & M \ar[r]^{f} & S(i)^c \ar[r] & 0 \\
& \tau^{-1}\left(\mi^{c}M\right) \ar[u]^{\psi} \ar[r]^{\tau^{-1}\iota} & \tau^{-1} M \ar[u]^{\phi} \ar[r] & 0. \ar[u]
}
$$
To construct a dense subset of $\Hom_{\mathbb{C}\Qu}(\tau^{-1}M,M)$ which is compatible with $\m_i^{c} M$ we analyze the relationship between $X=\m_i M$ and $M$ further. We first deduce from Theorem \ref{reinekethm} that $F_i(X,V^ {M})=a_i^{\Qu}(X)$ and thus, since Lemma \ref{V0} yields that $V^{X}$ is the unique $\kg$-minimal element of $\mathscr{S}_i(\Qu)$ such that $F_i(X,V)=a_i^{\Qu}(X)$, we have
\begin{equation}\label{eq:ungleichungm}
V^{X}\kg V^{M}.
\end{equation}
Equation (\ref{eq:ungleichungm}) allows us to make the following observation.
\begin{lem}\label{directsummand} The vector space $\Hom_{\mathbb{C}\Qu}((\tau^{-1}X)^{\kg V^X},X^{\kg V^X})$ is a direct summand of the vector space $\Hom_{\mathbb{C}\Qu}(\tau^{-1}M,X)$.
\end{lem}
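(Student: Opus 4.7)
The plan is to strengthen the claim from $\Hom$-spaces to modules: I aim to prove that $(\tau^{-1}X)^{\kg V^X}$ is itself a direct summand of $\tau^{-1}M$ as a $\mathbb{C}\Qu$-module. Granted this, since $X^{\kg V^X}$ is by construction a direct summand of $X$, the bifunctoriality of $\Hom$ immediately yields a decomposition
$$\Hom_{\mathbb{C}\Qu}(\tau^{-1}M,X)=\Hom_{\mathbb{C}\Qu}\bigl((\tau^{-1}X)^{\kg V^X},X^{\kg V^X}\bigr)\oplus W$$
for a suitable complement $W$, which is exactly the content of the lemma.

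To unpack the setup, recall from the lemma preceding Definition \ref{operatorm} that $V^M$ is a direct summand of $M$, so we may write $M=N\oplus V^M$ and correspondingly
$$X=\mi M=N\oplus U^M,\qquad U^M=\bigoplus_{B\in l(V^M)}\tau B.$$
Since $\tau B$ occurs as a direct summand of $M$ for each $B\in l(V^M)$, every such $B$ is non-projective and hence $\tau^{-1}\tau B\cong B$. This gives
$$\tau^{-1}X\cong\tau^{-1}N\oplus \bigoplus_{B\in l(V^M)}B,\qquad \tau^{-1}M\cong\tau^{-1}N\oplus\tau^{-1}V^M,$$
so $\tau^{-1}N$ is a common direct summand of both.

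The main step, and what I expect to be the only non-trivial point, is the combinatorial claim that no indecomposable $B\in l(V^M)$ satisfies $B\kg V^X$. Assume for contradiction that some $B\in l(V^M)$ satisfies $B\kg V^X$. By (\ref{eq:ungleichungm}) we also have $V^X\kg V^M$. Unfolding the definition of $\kg$ on $\mathscr{S}_i(\Qu)$, the inequality $B\kg V^X$ produces an indecomposable summand $C$ of $V^X$ with $\Hom_{\mathbb{C}\Qu}(B,C)\ne 0$, and $V^X\kg V^M$ produces an indecomposable summand $D$ of $V^M$ with $\Hom_{\mathbb{C}\Qu}(C,D)\ne 0$. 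Translating non-vanishing of $\Hom$ between indecomposables over a Dynkin quiver into the existence of paths in the Auslander-Reiten quiver (Proposition \ref{partialorder}) and composing these paths, one obtains a path from $[B]$ to $[D]$, hence $\Hom_{\mathbb{C}\Qu}(B,D)\ne 0$ and therefore $B\kg V^M$. But this contradicts the very definition of $l(V^M)$, which demands $B\ntrianglelefteq V^M$.

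Granted the combinatorial claim, every indecomposable constituent of $(\tau^{-1}X)^{\kg V^X}$ coming from the $\bigoplus_{B\in l(V^M)}B$ part is forced to vanish, and we conclude
$$(\tau^{-1}X)^{\kg V^X}=(\tau^{-1}N)^{\kg V^X},$$
which is visibly a direct summand of $\tau^{-1}N$, and therefore of $\tau^{-1}M$. Applying the additive functor $\Hom_{\mathbb{C}\Qu}(-,X)$ and then further splitting off the factor $\Hom_{\mathbb{C}\Qu}((\tau^{-1}X)^{\kg V^X},X^{\kg V^X})$ inside $\Hom_{\mathbb{C}\Qu}((\tau^{-1}X)^{\kg V^X},X)$ using the decomposition $X=X^{\kg V^X}\oplus X^{\ntrianglelefteq V^X}$ finishes the argument.
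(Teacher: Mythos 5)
Your proof is correct and follows essentially the same route as the paper: decompose $X=N\oplus U^M$, $M=N\oplus V^M$, observe that $\tau^{-1}N$ is a common direct summand, and use $V^X\kg V^M$ from (\ref{eq:ungleichungm}) to rule out $B\kg V^X$ for $B\in l(V^M)$, which the paper states as $\tau^{-1}U^M\ntrianglelefteq V^X$. Your detour through paths in the AR quiver can be shortened by simply invoking that $\kg$ is transitive on $\mathscr{P}_i(\Qu)$ (part of the assertion of Proposition \ref{partialorder}); the paper also records $\tau^{-1}V^M\ntrianglelefteq V^X$, which is used later but not needed for the lemma itself.
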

\begin{proof}
We write $X=\mi M = U^{M}\oplus M'$ where $M=M'\oplus V^M$ (compare with Definition \ref{operatorm}). Now (\ref{eq:ungleichungm}) shows that $\tau^{-1}U^M=\left(\bigoplus_{B\in l(V^{M})}B\right)\ntrianglelefteq V^{X}$ and $\tau^{-1}V^{M}\ntrianglelefteq V^{X}$ which proves the claim.
\end{proof}

\begin{prop}\label{dichtemenge} There exists a dense subset $\mathscr{D}_c$ of $\Hom_{\mathbb{C}\Qu}(\tau^{-1}M,M)$ such that each $\phi\in\mathscr{D}_c$ is compatible with $\mi^{c} M$.
\end{prop}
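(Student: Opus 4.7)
The plan is to proceed by induction on $c = \varepsilon_i(\overline{\mathcal{C}_{[M]}})$.

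\textbf{Base case $(c = 1)$.} Take $\mathscr{D}_1 := \mathscr{O}_M$, which is open dense in $\Hom_{\mathbb{C}\Qu}(\tau^{-1}M,M)$. For $\phi \in \mathscr{O}_M$, Lemma \ref{absteigen} says $\phi$ descends via $V^M$, producing a short exact sequence $0 \to \mi M \xrightarrow{\iota} M \xrightarrow{f} S(i) \to 0$ with $f \circ \phi = 0$, whence $\phi = \iota \circ \bar\phi$ for a unique $\bar\phi : \tau^{-1}M \to \mi M$. Setting $\psi := \bar\phi \circ \tau^{-1}\iota$ gives $\iota \circ \psi = \phi \circ \tau^{-1}\iota$, so the compatibility diagram commutes and every $\phi \in \mathscr{D}_1$ is compatible with $\mi^1 M = \mi M$.

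\textbf{Inductive step.} Assume the claim holds for modules with $\varepsilon_i$-value less than $c$, and set $X := \mi M$, so $\varepsilon_i(\overline{\mathcal{C}_{[X]}}) = c - 1$ (this uses Proposition \ref{3} together with the crystal identity $\varepsilon_i(\tilde e_i b_{[M]}) = \varepsilon_i(b_{[M]}) - 1$, available via Proposition \ref{homologicalkashiwara}). The inductive hypothesis applied to $X$ supplies a dense subset $\mathscr{D}^X_{c-1} \subset \Hom_{\mathbb{C}\Qu}(\tau^{-1}X, X)$, each $\psi_X$ of which comes equipped with a short exact sequence $0 \to \mi^c M \xrightarrow{\iota'} X \xrightarrow{f'} S(i)^{c-1} \to 0$ and a lift $\psi': \tau^{-1}(\mi^c M) \to \mi^c M$ satisfying $\iota' \circ \psi' = \psi_X \circ \tau^{-1}\iota'$. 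Define
\[
\mathscr{D}_c := \{\phi \in \mathscr{O}_M \mid \bar\phi \circ \tau^{-1}\iota \in \mathscr{D}^X_{c-1}\}.
\]
For $\phi \in \mathscr{D}_c$, composing the two short exact sequences along $\iota \circ \iota' : \mi^c M \hookrightarrow M$ and using $\Ext^1(S(i), S(i)) = 0$ (valid in Dynkin type) yields a short exact sequence $0 \to \mi^c M \to M \to S(i)^c \to 0$, and chaining the two compatibility squares produces the required commutative diagram with vertical maps $\psi'$ and $\phi$. Thus every $\phi \in \mathscr{D}_c$ is compatible with $\mi^c M$.

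\textbf{Main obstacle.} The technical heart is showing that $\mathscr{D}_c$ is dense in $\Hom_{\mathbb{C}\Qu}(\tau^{-1}M,M)$. Since $\mathscr{O}_M$ is already open dense, it suffices to verify that for a fixed choice of descent data $(\iota, f)$, the linear restriction map
\[
r : \Hom_{\mathbb{C}\Qu}(\tau^{-1}M, X) \to \Hom_{\mathbb{C}\Qu}(\tau^{-1}X, X), \qquad \bar\phi \mapsto \bar\phi \circ \tau^{-1}\iota,
\]
has the property that $r^{-1}(\mathscr{D}^X_{c-1})$ is dense. This is precisely where Lemma \ref{directsummand}, combined with the inequality $V^X \unlhd V^M$ from (\ref{eq:ungleichungm}), enters decisively: it exhibits $\Hom_{\mathbb{C}\Qu}((\tau^{-1}X)^{\unlhd V^X}, X^{\unlhd V^X})$ as a direct summand of $\Hom_{\mathbb{C}\Qu}(\tau^{-1}M, X)$ whose image under $r$ captures exactly the $V^X$-controlled data defining membership in $\mathscr{D}^X_{c-1}$; the complementary direct summand can then be varied freely to enforce the open condition $\mathscr{O}_M$. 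The delicate point is reconciling the $\phi$-dependence of the descent data $(\iota, f)$ with the direct-summand decomposition so that the two conditions (one open dense, one dense via the inductive hypothesis) can be satisfied simultaneously on a dense subset; this is where the explicit combinatorial construction of $V^M$ and of the $\preceq$-minimal morphisms developed in the previous subsection is essential.
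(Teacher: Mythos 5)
Your induction structure, base case $\mathscr{D}_1 = \mathscr{O}_M$, and reliance on Lemmas \ref{absteigen} and \ref{directsummand} all match the paper, but the proposal has a genuine gap exactly where you flag the ``main obstacle'': you never actually prove that your $\mathscr{D}_c := \{\phi \in \mathscr{O}_M \mid \bar\phi \circ \tau^{-1}\iota \in \mathscr{D}^X_{c-1}\}$ is dense, and moreover this set is not even well-defined without a coherent choice of descent data $(\iota,f,\bar\phi)$ for each $\phi$, which is non-linear in $\phi$. Acknowledging the difficulty is not resolving it.

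The missing idea is to \emph{strengthen the inductive hypothesis}: the paper proves, by induction on $c$, the existence of a dense $\mathscr{D}_c \subset \Hom_{\mathbb{C}\Qu}(\tau^{-1}M,M)$ which is compatible with $\mi M$ \emph{and} is saturated with respect to the projection $\pr_M$ onto $\Hom_{\mathbb{C}\Qu}((\tau^{-1}M)^{\kg V^M},M^{\kg V^M})$, i.e. $\pr_M^{-1}(\pr_M(\mathscr{D}_c))=\mathscr{D}_c$. This saturation is what makes both the density and the compatibility work in the inductive step. Concretely, the paper introduces the incidence variety $\Lambda(M,X) = \Hom_{\mathbb{C}\Qu}(\tau^{-1}M,X)\times \mathscr{I}$ of pairs $(\xi,\iota)$ and defines
\[
\mathscr{D}_{c+1}:=p_2\left(p_1^{-1}\left(\pr_{M,X}^{-1}\left(\pr_X\left(\mathscr{D}_{c}\right)\right)\right)\right),
\]
a definition that depends only on $\pr_X(\mathscr{D}_c)$ and therefore needs no case-by-case choice of $(\iota,f)$. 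Density then follows from elementary topology: $\pr_X$, $\pr_{M,X}$, $p_1$ are open surjections and $p_2$ is continuous with dense image (since $\mathscr{O}_M \subset \im p_2$ by Lemma \ref{absteigen}). Compatibility is checked by showing $\pr_X(\bar\phi\circ\tau^{-1}\iota)=\pr_{M,X}(\bar\phi)$, and it is precisely the saturation $\pr_X^{-1}(\pr_X(\mathscr{D}_c))=\mathscr{D}_c$ that lets one conclude $\bar\phi\circ\tau^{-1}\iota\in\mathscr{D}_c$ from knowing only its $\pr_X$-image lies in $\pr_X(\mathscr{D}_c)$. Finally, the new $\mathscr{D}_{c+1}$ is itself saturated because $p_1(p_2^{-1}(\Ker\pr_M))\subseteq\Ker\pr_{M,X}$ by (\ref{eq:ungleichungm}), closing the induction. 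Without this strengthening, the argument you sketch does not close.
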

\begin{proof} Let $$\pr_M:\Hom_{\mathbb{C}\Qu}(\tau^{-1}M,M)\twoheadrightarrow \Hom_{\mathbb{C}\Qu}((\tau^{-1}M)^{\kg V^M},M^{\kg V^M})$$ be the canonical projection.

We prove the following statement by induction on $\varepsilon_i(\overline{\mathcal{C}_{[M]}})=c$:
\begin{center}
There exists a dense subset $\mathscr{D}_c$ of $\Hom_{\mathbb{C}\Qu}(\tau^{-1}M,M)$ which is compatible with $\mi M$ such that $$\pr_M^{-1}(\pr_M(\mathscr{D}_c))=\mathscr{D}_c.$$
\end{center}
If $\varepsilon_i(\overline{\mathcal{C}_{[M]}})=1$, the set $\mathscr{O}_M\subset \Hom_{\mathbb{C}\Qu}(\tau^{-1}M,M)$ has the claimed property.

Assume that $\varepsilon_i(\overline{\mathcal{C}_{[M]}})=c+1$ and let $X:=\mi M$. By induction hypothesis there exists a dense subset $\mathscr{D}_c \subset \Hom_{\mathbb{C}\Qu}(\tau^{-1}X,X)$ which is compatible with $\mi^{c}X$ and satisfies the equation
$$\pr_X^{-1}(\pr_X(\mathscr{D}_c))=\mathscr{D}_c.$$
Recall that $\Hom_{\mathbb{C}\Qu}\left((\tau^{-1}X)^{\kg V^X},X^{\kg V^X}\right)$ is a direct summand of $\Hom_{\mathbb{C}\Qu}(\tau^{-1}M,X)$ from Lemma \ref{directsummand}
and denote by $\pr_{M,X}:\Hom_{\mathbb{C}\Qu}(\tau^{-1}M,X) \twoheadrightarrow \Hom_{\mathbb{C}\Qu}\left((\tau^{-1}X)^{\kg V^X},X^{\kg V^X}\right)$ the canonical projection.

Let $\mathscr{I}$ be the set of all injective morphisms $\iota: X \hookrightarrow M$ such that $\pi_N\circ\iota|_{N}=\id_N$ and let $\Lambda(M,X)=\Hom_{\mathbb{C}\Qu}(\tau^{-1}M,X)\times \mathscr{I}$ be the variety of pairs $(\xi,\iota)$ where $\xi \in \Hom_{\mathbb{C}\Qu}(\tau^{-1}M,X)$ and $\iota \in \mathscr{I}$. We define \begin{align*} p_1:\Lambda(M,X) & \rightarrow \Hom_{\mathbb{C}\Qu}(\tau^{-1}M,X) \\
(\xi,\iota)&\mapsto \xi
\end{align*}
and
\begin{align*}
p_2:\Lambda(M,X) & \rightarrow \Hom_{\mathbb{C}\Qu}(\tau^{-1}M,M) \\
(\xi,\iota)&\mapsto \iota\circ \xi.
\end{align*}
This yields the following diagram
\begin{center}\resizebox{14,5cm}{0.33cm}{$
\xymatrix{& & & \resizebox{3cm}{!}{$\Lambda(M,X)$}\ar@{->}[ld]_{{\resizebox{0.5cm}{!}{$p_1$}}}\ar@{->}[rd]^{{\resizebox{0.5cm}{!}{$p_2$}}}  \\ \resizebox{5,9cm}{!}{$\Hom_{\mathbb{C}\Qu}(\tau^{-1}X,X)$}\ar@{->}[dr]^{{\resizebox{0.8cm}{!}{$\pr_X$}}} & & \resizebox{5,9cm}{!}{$\Hom_{\mathbb{C}\Qu}(\tau^{-1}M,X)$} \ar@{->}[dl]_{{\resizebox{1,2cm}{!}{$\pr_{M,X}$}}} & & \resizebox{5,9cm}{!}{$\Hom_{\mathbb{C}\Qu}(\tau^{-1}M,M).$}  \\
 & \resizebox{9,0cm}{!}{$\Hom_{\mathbb{C}\Qu}\left((\tau^{-1}X)^{\kg V^X},X^{\kg V^X}\right)$} & &
}
$}
\end{center}
We define $$\mathscr{D}_{c+1}:=p_2\left(p_1^{-1}\left(\pr_{M,X}^{-1}\left(\pr_X\left(\mathscr{D}_{c}\right)\right)\right)\right).$$
Note that $\pr_X$, $pr_{M,X}$ and $p_1$ are projections and therefore continuous, surjective and open. Further, by Lemma \ref{absteigen}, $\mathscr{O}_M\subset \im p_2$ and thus $p_2$ is continuous with dense image. Hence $\mathscr{D}_{c+1}$ is a dense subset of $\Hom_{\mathbb{C}\Qu}(\tau^{-1}X,X)$.

We show that $\pi_M^{-1}\pi_M(\mathscr{D}_{c+1})=\mathscr{D}_{c+1}$. Let $\widetilde{\phi}\in \pi_M^{-1}\pi_M(\mathscr{D}_{c+1})$. By construction there exists $\phi \in \mathscr{D}_{c+1}$ and $\lambda \in \Ker\pi_{M}$ such that $\widetilde{\phi}=\phi + \lambda $. Since by (\ref{eq:ungleichungm}) we have $p_1 (p_2^{-1} ( \Ker \pi_M)) \subseteq \Ker \pr_{M,X}$, it follows that $\widetilde{\phi}\in\mathscr{D}_{c+1}$.

It remains to show that any $\phi\in \mathscr{D}_{c+1}$ is compatible with $\mi^{c+1}$. Let therefore $\bar{\phi}\in \pr_{M,X}^{-1}\pr_X(\mathscr{D}_c)$ and $\iota \in \mathscr{I}$ such that
$$\phi=\iota\circ \bar{\phi}.$$
We show that $\bar{\phi}\circ \tau^{-1}\iota \in \mathscr{D}_c$ which is equivalent to
\begin{equation}\label{eq:mx}
\pr_X(\bar{\phi}\circ \tau^{-1}\iota)=\pr_{M,X}(\bar{\phi}).
\end{equation}
The claim then follows from the induction hypothesis.

Equation (\ref{eq:mx}) holds since by (\ref{eq:ungleichungm}) $$\Hom_{\mathbb{C}\Qu}(\tau^{-1}V^X,N^{\kg V^X})=0=\Hom_{\mathbb{C}\Qu}(\tau^{-1}V^M,N^{\kg V^X})$$ and $\pi_{\tau^{-1}N}\circ\tau^{-1}\iota|_{\tau^{-1}N}=\id_{\tau^{-1}N}$ by the defition of $\mathscr{I}$.
\end{proof}
\begin{prop}\label{crystaliso} For $M\in \mathbb{C}\Qu-\modd$, with $\varepsilon_i(\overline{\mathcal{C}_{[M]}})=c>0$, we have
$$\tilde{e}_i^c \overline{\mathcal{C}_{[M]}}=\overline{\mathcal{C}_{[\mi^c M]}}.$$
\end{prop}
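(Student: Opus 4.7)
The plan is to apply Remark \ref{dense}: to identify $\tilde{e}_i^c \overline{\mathcal{C}_{[M]}}$ with $\overline{\mathcal{C}_{[\mi^c M]}}$ it is enough to exhibit a dense subset $\mathscr{D}$ of $\overline{\mathcal{C}_{[M]}}$ whose image under the span $p_1\circ p_2^{-1}$ of Diagram (\ref{diaggeom}) lies in $\overline{\mathcal{C}_{[\mi^c M]}}$, together with the verification that $\overline{\mathcal{C}_{[\mi^c M]}}$ is actually a component of $\Lambda(v-ce^i)_{i,0}$. The latter is immediate: by Proposition \ref{3}, equation (\ref{eq:functione}) and $c$-fold application of Proposition \ref{homologicalkashiwara}, we obtain $\varepsilon_i(\overline{\mathcal{C}_{[\mi^c M]}})=a_i^{\Qu}(\mi^c M)=0$.

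First I would invoke Proposition \ref{dichtemenge} to produce a dense subset $\mathscr{D}_c\subset \Hom_{\mathbb{C}\Qu}(\tau^{-1}M,M)$ whose elements are compatible with $\mi^c M$. Via the identification $\pr^{-1}(M)\cong \Hom_{\mathbb{C}\Qu}(\tau^{-1}M,M)$ of Theorem \ref{preprojmodule} and Remark \ref{fiber}, the saturation $\mathscr{D}:=G_v\cdot \mathscr{D}_c$ is dense in $\overline{\mathcal{C}_{[M]}}$. After shrinking $\mathscr{D}_c$ to the dense open sub-locus on which the (upper semicontinuous) function $\varepsilon_i$ attains its minimum $c$, I may assume that every $\phi\in\mathscr{D}_c$ represents a point of $\Lambda(v)_{i,c}$.

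Next, the compatibility with $\mi^c M$ provides, for each such $\phi$, data $(\iota,\psi)$ fitting into a short exact sequence $0\to (\mi^c M,\psi)\xrightarrow{\iota}(M,\phi)\to S(i)^c\to 0$ in $\Pi(\Qu)$-mod (read off from the commuting diagram of Definition \ref{descends} and the equivalence of Theorem \ref{preprojmodule}), with $\iota$ an injective $\Pi(\Qu)$-module morphism. Hence $(\phi,\psi,\iota)$ is a candidate point of $\tilde{\Lambda}(v,c,i)$, provided $\varepsilon_i(\psi)=0$. Since the underlying $\mathbb{C}\Qu$-module of $\psi$ is $\mi^c M$, the element $\psi$ lies in $\pr^{-1}(\mi^c M)\subset \overline{\mathcal{C}_{[\mi^c M]}}$, and upper semicontinuity of $\varepsilon_i$ on this component, combined with $\varepsilon_i(\overline{\mathcal{C}_{[\mi^c M]}})=0$, forces $\varepsilon_i(\psi)=0$ on a further dense sub-locus; intersecting $\mathscr{D}_c$ with the preimage of this locus produces genuine points of $\tilde{\Lambda}(v,c,i)$.

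Finally, by construction $p_1(\phi,\psi,\iota)=\psi\in \pr^{-1}(\mi^c M)\subseteq \overline{\mathcal{C}_{[\mi^c M]}}$, so $p_1 p_2^{-1}(\mathscr{D})\subseteq \overline{\mathcal{C}_{[\mi^c M]}}$, and Remark \ref{dense} together with the uniqueness of the image irreducible component yields $\tilde{e}_i^c\overline{\mathcal{C}_{[M]}}=\overline{\mathcal{C}_{[\mi^c M]}}$. The main obstacle I anticipate is the careful bookkeeping needed to arrange both $\varepsilon_i(\phi)=c$ and $\varepsilon_i(\psi)=0$ on a common dense subset of $\pr^{-1}(M)$; beyond this verification, the argument is a formal consequence of the preparatory work, in particular Propositions \ref{3}, \ref{dichtemenge} and Lemma \ref{absteigen}.
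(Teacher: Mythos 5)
Your route is the same as the paper's: invoke Proposition \ref{dichtemenge} for the dense compatible locus $\mathscr{D}_c\subset\Hom_{\mathbb{C}\Qu}(\tau^{-1}M,M)$, identify this with a dense subset of the fiber $\pr^{-1}(M)$ via Theorem \ref{preprojmodule}, saturate by $G_v$ using Remark \ref{fiber}, and conclude from Remark \ref{dense}. You are also right that one should record $\varepsilon_i(\overline{\mathcal{C}_{[\mi^c M]}})=0$ and may shrink $\mathscr{D}_c$ to the open dense locus where $\varepsilon_i(M,\phi)=c$; the paper leaves both points implicit, and your handling of them is sound.

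One step of your write-up, however, does not go through as stated: you argue $\varepsilon_i(\psi)=0$ by intersecting $\mathscr{D}_c$ with the preimage of the open dense locus $\{\varepsilon_i=0\}\subset\overline{\mathcal{C}_{[\mi^c M]}}$ under the assignment $\phi\mapsto\psi$, but that preimage need not be dense (the map $\phi\mapsto\psi$ is not a priori open or dominant). Fortunately no density argument is needed here. Once $\varepsilon_i(M,\phi)=c$, the unique $\Pi(\Qu)$-submodule $N\subset(M,\phi)$ with quotient $S(i)^c$ is given concretely by $N_j=M_j$ for $j\ne i$ and $N_i=\sum_{h:\In(h)=i}\im x_h$, so the cokernel at $i$ defining $\varepsilon_i(N)$ in equation (\ref{eq:epsil}) is zero by construction; thus $\varepsilon_i(\psi)=0$ automatically, and $(\phi,\psi,\iota)$ is a genuine point of $\tilde{\Lambda}(v,c,i)$. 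With this replacement the argument is complete and matches the paper's.
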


\begin{proof}

We have shown in Proposition \ref{dichtemenge} that there exists a dense subset $\mathscr{D}_c\subset \Hom_{\mathbb{C}\Qu}(\tau^{-1}M,M)$  such that for every $\phi\in \mathscr{D}_c$,  the (up to isomorphism) unique $\Pi(\Qu)$-submodule of $(M,\phi)$ with quotient isomorphic to $S(i)^c$ is of the form $(\mi^c M, \psi)$ for a $\psi \in \Hom_{\mathbb{C}\Qu}(\tau^{-1} \mi^c M, \mi^c M)$. Recall from Theorem \ref{preprojmodule} that $\Hom_{\mathbb{C}\Qu}(\tau^{-1}M,M)$ can be identified with the fiber of $M$ of the conormal bundle $\mathcal{C}_{[M]}$. Thus $\mathscr{D}_c$ can be identified with a dense subset of that fiber. Now $g\in G_v$ maps $(M,\phi)$ to an isomorphic $\Pi(\Qu)$-module and the claim thus follows from Remark \ref{dense} and Remark \ref{fiber}.
\end{proof}
Since the weight map is clearly preserved by $\mathscr{F}$ we have thus proved:

\begin{thm}\label{mainresult} For $\Qu$ a special Dynkin quiver the map $\mathscr{F}:B^{\mathscr{H}}(\infty)\rightarrow B^g(\infty)$ given by ($M\in \Repp_V(\Qu)$) $$b_{[M]}\mapsto \overline{\mathcal{C}_{[M]}}$$ is an isomorphism of crystals.
\end{thm}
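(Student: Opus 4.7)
Since the structural work has been distilled into Propositions~\ref{3} and~\ref{crystaliso}, the plan is to assemble these into a verification that $\mathscr F$ respects each piece of crystal datum, namely $(\wt,\varepsilon_i,\varphi_i,\tilde e_i,\tilde f_i)$. The bijectivity of $\mathscr F$ is already part of the setup: by Proposition~\ref{conormal}, irreducible components of $\Lambda(v)$ correspond bijectively to $G_v$-orbits in $\Repp_V(\Qu)$, hence to isomorphism classes of $\mathbb C\Qu$-modules of graded dimension $v$, and this is exactly the assignment $b_{[M]}\mapsto\overline{\mathcal C_{[M]}}$.

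First I would check weight-equivariance, which is immediate: for $M\in\Repp_V(\Qu)$ both $b_{[M]}$ and $\overline{\mathcal C_{[M]}}$ are indexed by the graded dimension $v=(\dim V_i)_{i\in I}$, so in both crystals they have weight $-\sum_{i\in I}v_i\alpha_i$. Next I would combine equation~\eqref{eq:functione} with Proposition~\ref{3} to get
\[
\varepsilon_i(b_{[M]})=a_i^{\Qu}(M)=\varepsilon_i(\overline{\mathcal C_{[M]}}),
\]
and then $\varphi_i$-equivariance follows automatically from the common formula $\varphi_i=\varepsilon_i+\langle h_i,\wt\rangle$.

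The core of the assembly is equivariance with respect to the Kashiwara operators. By Proposition~\ref{homologicalkashiwara} we have $\tilde e_ib_{[M]}=b_{[\mi M]}$ on the homological side whenever $\varepsilon_i(b_{[M]})>0$; on the geometric side, Proposition~\ref{crystaliso} specialised to $c=1$ gives $\tilde e_i\overline{\mathcal C_{[M]}}=\overline{\mathcal C_{[\mi M]}}$ under the same hypothesis. Combining the two identifications, $\mathscr F(\tilde e_ib_{[M]})=\tilde e_i\mathscr F(b_{[M]})$, with both sides equal to the formal symbol $0$ in the boundary case $\varepsilon_i=0$. Equivariance under $\tilde f_i$ then comes for free: $\mathscr F$ is a bijection and, in each of the two crystals, $\tilde f_i$ is the partial inverse of $\tilde e_i$.

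If I had to name the main obstacle, it would not live in this concluding step but in the two results it invokes. Proposition~\ref{3} is where one must bridge the homological invariant $a_i^{\Qu}(M)$, defined as a maximum of $F_i(M,V)$ over the whole poset $\mathscr S_i(\Qu)$, and the generic cokernel quantity $\min_{\phi}\ell_i(\Coker\phi)$; this required introducing the combinatorial graph $\mathscr P_M^{\infty}$ and the calculus of $\preceq$-minimal combinatorial morphisms. Proposition~\ref{crystaliso} in turn rests on the inductive construction in Proposition~\ref{dichtemenge} of a dense subset of $\Hom_{\mathbb C\Qu}(\tau^{-1}M,M)$ whose elements are compatible with $\mi^cM$, which is where the assumption that $\Qu$ is special (via the dimension estimates of Lemma~\ref{minimum} and Lemma~\ref{absteigen}) is genuinely used.
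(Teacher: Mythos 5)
Your assembly follows the paper's route exactly: the bijectivity of $\mathscr F$ comes from Proposition~\ref{conormal}, weight preservation is immediate, $\varepsilon_i$-preservation is Proposition~\ref{3} via equation~\eqref{eq:functione}, and Kashiwara-operator equivariance is supplied by Proposition~\ref{crystaliso}, exactly as the paper concludes in its one-line proof.

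One small imprecision worth flagging: in Proposition~\ref{crystaliso} the exponent $c$ is not a free parameter, it is forced to equal $\varepsilon_i(\overline{\mathcal C_{[M]}})$, so the proposition delivers $\tilde e_i^{\max}$-equivariance rather than $\tilde e_i$-equivariance at $c=1$ unless $\varepsilon_i=1$. This is harmless: since $\mathscr F$ preserves $\varepsilon_i$ and the geometric $\tilde e_i$, $\tilde f_i$ are both recovered from the bijection $\Ir\Lambda(v)_{i,c}\cong\Ir\Lambda(v-ce^i)_{i,0}$ (equivalently, from $\tilde e_i^{\max}$ and the stratification by $\varepsilon_i$), matching $\tilde e_i^{\max}$ and $\varepsilon_i$ already pins down $\tilde e_i$ and $\tilde f_i$; alternatively, apply Proposition~\ref{crystaliso} to both $M$ and $\mi M$ (whose $\varepsilon_i$ is $c-1$ by Theorem~\ref{reinekethm} and Proposition~\ref{3}) and use injectivity of $\tilde e_i^{c-1}$ on its domain. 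With that patch, the argument is complete and coincides with the paper's.
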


\end{document}